\documentclass[oneside,english]{amsart}
\usepackage[T1]{fontenc}
\usepackage[latin9]{inputenc}
\usepackage{babel}
\usepackage{textcomp}
\usepackage{amsbsy}
\usepackage{amstext}
\usepackage{amsthm}
\usepackage{amssymb}
\usepackage[unicode=true,pdfusetitle,
 bookmarks=true,bookmarksnumbered=false,bookmarksopen=false,
 breaklinks=false,pdfborder={0 0 1},backref=false,colorlinks=false]
 {hyperref}

\makeatletter
\numberwithin{equation}{section}
\numberwithin{figure}{section}
\theoremstyle{plain}
\newtheorem*{thm*}{\protect\theoremname}
\theoremstyle{plain}
\newtheorem{thm}{\protect\theoremname}
\theoremstyle{definition}
\newtheorem{defn}[thm]{\protect\definitionname}
\theoremstyle{definition}
\newtheorem{example}[thm]{\protect\examplename}
\theoremstyle{remark}
\newtheorem{rem}[thm]{\protect\remarkname}
\theoremstyle{plain}
\newtheorem{lem}[thm]{\protect\lemmaname}
\theoremstyle{plain}
\newtheorem{prop}[thm]{\protect\propositionname}
\theoremstyle{plain}
\newtheorem{cor}[thm]{\protect\corollaryname}

\numberwithin{thm}{section}
\usepackage[margin=3.3cm]{geometry}

\DeclareMathOperator{\Ric}{\mathrm{Ric}}
\DeclareMathOperator{\W}{\mathrm{W}}
\DeclareMathOperator{\R}{\mathrm{R}}
\DeclareMathOperator{\s}{\mathrm{s}}
\DeclareMathOperator{\PP}{\mathrm{P}}
\DeclareMathOperator{\G}{\mathrm{G}}

\DeclareMathOperator{\cl}{\mathbf{c}}
\DeclareMathOperator{\dVol}{\mathrm{dVol}}

\DeclareMathOperator{\II}{\mathrm{I\!I}}
\DeclareMathOperator{\shape}{\mathrm{S}}
\DeclareMathOperator{\HH}{\mathrm{H}}

\DeclareMathOperator{\CS}{\mathrm{CS}}
\DeclareMathOperator{\sk}{\mathrm{skew}}
\DeclareMathOperator{\symm}{\mathrm{symm}}
\DeclareMathOperator{\id}{\mathrm{id}}
\DeclareMathOperator{\curl}{\mathrm{curl}}

\DeclareMathOperator{\symmtf}{\mathring{\mathrm{symm}}}
\DeclareMathOperator{\tf}{\mathrm{tf}}

\DeclareMathOperator{\Spin}{\mathrm{Spin}}
\DeclareMathOperator{\SO}{\mathrm{SO}}
\DeclareMathOperator{\SL}{\mathrm{SL}}
\DeclareMathOperator{\SU}{\mathrm{SU}}
\DeclareMathOperator{\GL}{\mathrm{GL}}

\DeclareMathOperator{\tr}{\mathrm{tr}}

\DeclareMathOperator{\phg}{\mathrm{phg}}
\DeclareMathOperator{\Fr}{\mathrm{Fr}}

\DeclareMathOperator{\res}{\mathrm{res}}
\DeclareMathOperator{\hyp}{\mathrm{hyp}}

\makeatother

\providecommand{\corollaryname}{Corollary}
\providecommand{\definitionname}{Definition}
\providecommand{\examplename}{Example}
\providecommand{\lemmaname}{Lemma}
\providecommand{\propositionname}{Proposition}
\providecommand{\remarkname}{Remark}
\providecommand{\theoremname}{Theorem}

\begin{document}
\title{Nahm poles and 0-instantons}
\author{Marco Usula}
\begin{abstract}
We study self-dual $0$-connections, or $0$-instantons, on asymptotically
hyperbolic 4-manifolds. These connections develop a uniform singularity
along the conformal infinity, and are asymptotic, at each point of
the boundary, to a ``Nahm pole'' model solution on $\HH^{4}$.
Examples include the Levi-Civita spin connections on $\mathbb{S}_{+}$
over spin Poincaré--Einstein 4-manifolds. Inspired by the Fefferman--Graham
expansion for Poincaré--Einstein metrics, we study the asymptotic
expansion of these $0$-instantons. We prove that the expansion is
log-smooth, and that the coefficient of the first log term --- which
we call the $0$-instanton obstruction tensor --- is a conformal
invariant related to the Weyl curvature of the ambient conformal metric.
We then show that this invariant vanishes if and only if the $0$-instanton
is smooth modulo gauge. Finally, we study the renormalized Yang--Mills
energy: we prove that, if the metric is asymptotically Poincaré--Einstein
to third order, then this energy is a well-defined conformal invariant,
and equals the negative Chern-Simons invariant of the conformal infinity.
\end{abstract}

\maketitle
\tableofcontents{}

\sloppy

\section{Introduction}

This paper is situated at the intersection of three topics in mathematical
physics and geometric analysis: the study of \emph{conformally compact
geometry}, and particularly \emph{Poincaré--Einstein metrics}; the
study of 4-manifolds via \emph{gauge theory}, and particularly the
\emph{self-duality equation}; and Witten's recent approach to knot
invariants, through the study of gauge theoretic\emph{ singular boundary
value problems}.

Conformally compact geometry has been an intense topic of research,
since the pioneering work of Fefferman--Graham in the 1980s \cite{FeffermanGrahamConformal, FeffermanGrahamAmbient}.
A metric $g$ on the interior $X^{\circ}$ of a compact manifold with
boundary $X$ is conformally compact if its conformal rescaling $x^{2}g$
by the square of a boundary defining function\footnote{A non-negative smooth function on $X$ which vanishes transversely
along $\partial X$.} $x$ extends to a metric on $X$. These metrics generalize the metric
on hyperbolic space, and share many properties with it; in particular,
they induce a conformal class on the boundary $\partial X$, called
its \emph{conformal infinity}. Fefferman and Graham discovered that,
given a closed conformal manifold $\left(Y^{n},\mathfrak{c}\right)$,
one can construct a corresponding \emph{formal}\footnote{The word \textquotedblleft formal\textquotedblright{} means that the
metric is constructed only as a formal expansion near its boundary.} conformally compact and Einstein metric $\left(X,g\right)$ on the
cylinder $X=Y\times[0,+\infty)$. This construction provides a fundamental
tool to construct \emph{conformal invariants} of $\left(Y,\mathfrak{c}\right)$,
in terms of the formal expansion of the Riemannian invariants of $\left(X,g\right)$.

The mathematical study of self-dual instantons goes back to work of
Atiyah--Hitchin--Singer \cite{AtiyahHitchinSingerSelfDuality} and
Atiyah--Hitchin--Drinfeld--Manin \cite{ADHM} and, thanks
to pioneering work by Donaldson, Taubes, Uhlenbeck and others, has
remained an active topic ever since. A \emph{self-dual instanton}
on a closed, oriented, conformal 4-manifold is a connection $A$ satisfying
the equation $F_{A}=\star F_{A}$. This equation is gauge-invariant
and elliptic modulo gauge, and this implies that the moduli space
of solutions of this PDE is essentially a finite-dimensional object,
typically a manifold with isolated singularities. Donaldson
showed that, in favorable situations, this moduli space is naturally
\emph{compactifiable}, and this allows one to extract enumerative
invariants out of it. When the dimension is $0$, these enumerative
invariants are simply appropriate signed counts of the solutions.
Remarkably, these invariants detect very subtle properties of the
\emph{smooth structure} of the underlying 4-manifold, and have been
central in showing the existence of infinitely many exotic smooth
structures on some closed 4-manifolds \cite{donaldson1983application, donaldson1990polynomial, taubes1996seiberg, taubes2000seiberg, UhlenbeckRemovableSingularities}.
This effectively opened the \textquotedblleft Pandora's box\textquotedblright{}
of 4-dimensional differential topology, a topic which is currently
being studied intensely, but in which many questions remain out of
reach.

Closely related to the previous ideas is a recent approach proposed
by Witten, to find \emph{knot invariants} via gauge theory in four
dimensions. More precisely, Witten discussed in \cite{WittenFivebranes}
the idea of realizing the coefficients of the Jones polynomial of
a knot in $\mathbb{R}^{3}$ as \textquotedblleft counts\textquotedblright{}
of solutions to another gauge-theoretic PDE, the \emph{Kapustin--Witten
equation}, on the upper half-space $\mathbb{R}_{+}^{4}$. A distinguishing
feature of his approach is that the complex connection involved in
these equations is \emph{singular} along both the boundary and the
knot. The asymptotic boundary condition around these singularities
is known as the \emph{Nahm pole boundary condition}. A mathematically-oriented
analysis of this boundary problem is carried out in \cite{MazzeoWittenNahmI, MazzeoWittenNahmII}.
These ideas have inspired many interesting papers, among which we
mention for example \cite{he2020extended, he2019extended, Dimakis1, Dimakis2, Dimakis3}.

In this paper, we study the self-duality equations for $\SU\left(2\right)$
\emph{$0$-connections} on a bundle $E\to X$ over an asymptotically
hyperbolic 4-manifold $\left(X,g\right)$. These connections allow
taking directional derivatives along \emph{$0$-vector fields}, namely
those vector fields on $X$ that vanish along $\partial X$. These
connections arise naturally in conformally compact geometry. For example,
the Levi-Civita connection of a conformally compact metric always
extends to a $0$-connection.

This class of connections appeared in \cite{FineHerfrayKrasnovScarinciAH}
and implicitly in \cite{MazzeoWittenNahmI}, and was formalized by
the author in \cite{UsulaYangMills}. In that previous work, the author
solved a boundary value problem for \emph{Yang--Mills} connections
on conformally compact manifolds. However, in contrast to that setting,
the connections studied in the present paper develop a \emph{uniform
singularity} along the boundary. More precisely, near each point at
infinity, the metric is asymptotic to the hyperbolic half-space metric
$x^{-2}\left(dx^{2}+dy^{2}\right)$ on $\HH^{4}$, and the $0$-connections
are required to be asymptotic to the \emph{Nahm pole solution }on
$\HH^{4}$
\[
\nabla=d+\sum_{i=1}^{3}\frac{dy_{i}}{x}\otimes\mathfrak{s}_{i}
\]
where $\mathfrak{s}_{1},\mathfrak{s}_{2},\mathfrak{s}_{3}$ is a basis
of $\mathfrak{su}\left(2\right)$ satisfying $\left[\mathfrak{s}_{i},\mathfrak{s}_{j}\right]=\varepsilon_{ij}{^{k}}\mathfrak{s}_{k}$.
Invariantly, this solution represents the Levi-Civita spin $0$-connection
on the spinor bundle $\mathbb{S}_{+}\to\HH^{4}$. This boundary condition
appears naturally in the study of $\SU\left(2\right)$ $0$-instantons:
we prove in Proposition \ref{prop:nahm-residues} that, under a mild
non-degeneracy condition, an $\SU\left(2\right)$ $0$-connection
satisfies the Nahm pole boundary condition if and only if it is asymptotically
self-dual.

\subsection*{Main results of the paper}

Fix an asymptotically hyperbolic 4-manifold $\left(X^{4},g\right)$,
an $\SU\left(2\right)$ vector bundle $E$, and call $\mathfrak{c}_{\infty}\left(g\right)$
the conformal infinity of $g$. As proved in \cite{GrahamLeeEinstein},
for every choice of $h_{0}\in\mathfrak{c}_{\infty}\left(g\right)$
we can choose a boundary defining function $x$ such that $\left(x^{2}g\right)_{|\partial X}=h_{0}$,
and, calling $\overline{g}=x^{2}g$, we have $\left|dx\right|_{\overline{g}}^{2}\equiv1$
in a neighborhood of $\partial X$. Such an $x$ is called a \emph{geodesic
boundary defining function }inducing $h_{0}$. In a sufficiently small
collar of $\partial X$ induced by $\mathrm{grad}_{\overline{g}}x$,
the metric $g$ takes the normal form
\[
g=\frac{dx^{2}+h\left(x\right)}{x^{2}},
\]
where $h:[0,\varepsilon)\to S^{2}\left(T^{*}\partial X\right)$ is
a family of metrics on $\partial X$ starting at $h_{0}$.

If a $0$-connection $A$ satisfies the Nahm pole boundary condition,
then it can be put in a similar normal form, dependent on the choice
of $h_{0}\in\mathfrak{c}_{\infty}\left(g\right)$. More precisely,
we show in §\ref{subsec:Nahm-poles-and-spin-structures-on-the-boundary}
and §\ref{subsec:A-normal-form-for-0connections-adapted-to-g-at-infinity}
that the Nahm pole and the choice of $h_{0}$ determine canonically
a spin structure $\left(\mathbb{S},\cl_{h_{0}}\right)$ on $\left(\partial X,h_{0}\right)$,
and we can then associate to $A$ a family $\alpha_{h_{0}}\left(x\right)$
of \emph{$\SU\left(2\right)$ }connections on $\mathbb{S}$ defined
on $\left(0,\varepsilon\right)$; we call it the \emph{geodesic normal
family }of $A$ relative to $h_{0}$. This family of connections is
an invariant of the gauge class $\left[A\right]$, and it has a polyhomogeneous
expansion as $x\to0$ with a leading $O\left(x^{-1}\right)$ term.
Our first main result is a direct analogue of Theorem 4.8 of \cite{FeffermanGrahamAmbient}
for Poincaré--Einstein metrics; it shows that, if $A$ is self-dual,
then this expansion takes a special form:
\begin{thm*}
(Theorems \ref{thm:index-set-0-instantons}, \ref{thm:coefficients-of-the-instanton-expansion})
If $A$ is a polyhomogeneous self-dual $0$-instanton, then the expansion
of the geodesic normal family $\alpha_{h_{0}}\left(x\right)$ is log-smooth;
more precisely, it takes the form
\begin{align*}
\alpha_{h_{0}}\left(x\right) & \sim\alpha_{-1}x^{-1}+\alpha_{0}+\alpha_{1,1}x\log x+\alpha_{1}x+\\
 & +\sum_{k=2}^{\infty}\sum_{l=0}^{k}\left(x^{2k-1}\left(\log x\right)^{l}\alpha_{2k-1,l}+x^{2k}\left(\log x\right)^{l}\alpha_{2k,l}\right).
\end{align*}
In this expansion $\alpha_{0}$ is an $\SU\left(2\right)$ connection
on $\mathbb{S}$, the other coefficients are $\mathfrak{su}\left(\mathbb{S}\right)$-valued
$1$-forms, and the following conditions are satisfied:
\begin{enumerate}
\item $\alpha_{-1}$ is determined by $h_{0}$;
\item $\alpha_{0}$ and $\alpha_{1,1}$ are completely determined by $h_{0}$
and $\left(\partial_{x}h\right)_{|x=0}$;
\item the $\mathfrak{su}\left(\mathbb{S}\right)$-valued $1$-form $\alpha_{1}$
admits an algebraic decomposition into three parts, $\tr\alpha_{1}$,
$\sk\alpha_{1}$ and $\symmtf\alpha_{1}$; while the first two are
completely determined by $h_{0}$ and $\left(\partial_{x}h\right)_{|x=0}$,
the latter is in fact \emph{formally free};
\item for $k\geq2$, the coefficient $\alpha_{k,l}$ is completely determined
by $\left(\partial_{x}^{j}h\right)_{|x=0}$ for $j\leq k$ and $\symmtf\alpha_{1}$.
\end{enumerate}
\end{thm*}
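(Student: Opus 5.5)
Here is the plan. I put the connection in temporal gauge with respect to the geodesic collar, so that $A = d + \alpha_{h_0}(x)$ with no $dx$ component. I then write the self-duality equation $F_A = \star_g F_A$ as an evolution equation in $x$ for the family $\alpha(x)$. Since $\star_g$ on $2$-forms is conformally invariant, I may use $\overline{g} = dx^2 + h(x)$. Splitting $F_A = dx\wedge \partial_x\alpha + F_{\alpha}$, self-duality becomes
\[
\partial_x \alpha = \star_{h(x)} F_{\alpha(x)},
\]
a first-order nonlinear ODE in $x$ with values in connections on $\mathbb{S}$. I then substitute $\alpha = x^{-1}\alpha_{-1} + \beta$, where $\alpha_{-1} = \sum dy_i\otimes\mathfrak{s}_i$ is the Nahm pole term: via $\cl_{h_0}$ it is the canonical isomorphism $T^*\partial X\to\mathfrak{su}(\mathbb{S})$, so it depends only on $h_0$, which gives item (1). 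Expanding $F_\alpha$ gives the leading $x^{-2}$ balance $-\alpha_{-1} = \star_{h_0}\tfrac12[\alpha_{-1}\wedge\alpha_{-1}]$, which holds by the Nahm pole condition of Proposition \ref{prop:nahm-residues}. The next order yields a Fuchsian (regular singular) system
\[
x\partial_x\beta + L\beta = x^{-1}(\text{known}) + Q(\beta) + \dots,
\qquad L\beta = -\star_{h_0}[\alpha_{-1}\wedge\beta],
\]
where $L$ is an algebraic endomorphism of $\mathfrak{su}(\mathbb{S})\otimes T^*\partial X\cong T^*\otimes T^*$.

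The key step is to diagonalize $L$ on the decomposition $T^*\otimes T^* = \tr\oplus\sk\oplus\symmtf$. Under the identification with $3\times3$ matrices, $[\alpha_{-1}\wedge\beta]$ followed by $\star$ acts as $\beta\mapsto (\tr\beta)\,\id-\beta^{T}$ up to sign. So $L$ acts by a scalar on each of the three pieces, and the indicial roots (the values of $k$ with $x^k$ in the kernel of $x\partial_x + L$) are distinct integers. I expect them to be $0$ on the pure-gauge/trace directions, which gives the free connection $\alpha_0$, and $1$ on $\symmtf$ together with something non-resonant elsewhere. Before proceeding I need to confirm this by direct computation, including the role of the $x^{-1}$ shift in $\alpha$.

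Next I run the standard polyhomogeneous induction. Take the index set of $\beta$ from the polyhomogeneity assumption and compare coefficients of $x^{k}(\log x)^l$. Away from indicial roots, $(k+L)$ is invertible, so each coefficient is determined by lower ones and no new index-set elements appear; this proves the index set is contained in $\mathbb{N}$ with log powers. At the root $k=0$ the equation forces $\alpha_0$ to be the connection determined by $h_0$: the $O(1)$ term must make $\alpha_{-1}$ parallel, so $\alpha_0$ is the spin connection of $h_0$, twisted by the $\partial_x h(0)$ correction. This gives the first half of item (2). At the root $k=1$ on $\symmtf$, the operator $(1+L)$ has a kernel there, so the equation splits. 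On $\tr$ and $\sk$ it determines $\alpha_1$ from $h_0$ and $\partial_x h(0)$. On $\symmtf$ a solvability condition appears: the $\symmtf$-part of the inhomogeneity must vanish, and when it does not it is absorbed by the log term $\alpha_{1,1}$ (because $(x\partial_x+1)(x\log x)=x$). This determines $\alpha_{1,1}$ and leaves $\symmtf\alpha_1$ free, which proves the rest of item (2) and item (3).

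For $k\ge2$ I continue by induction. Each resonance with $\log$ powers feeds through the quadratic term $Q$, and the products $x\log x\cdot x^{j}(\log x)^{m}$ generate at most $k$ log powers at order $x^{2k-1}$ and $x^{2k}$, which gives the stated bound $l\le k$. Determination follows from the fact that $\star_{h(x)}$ and the $x$-expansion of $h$ enter at order $k$ only through $\partial_x^jh(0)$ with $j\le k$, giving item (4). The main obstacle is the precise computation of $L$ and the exact inhomogeneity at order $x^1$, including how $\partial_x h(0)$ enters through $\star_{h(x)}$. I would handle it first by working in an $h_0$-orthonormal frame and identifying the spin connection, and only then set up the induction.
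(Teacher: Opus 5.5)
Your architecture matches the paper's proof: parallel-transport gauge along $\partial_x$, the evolution equation $\partial_x\alpha=-\star f_\alpha$, an indicial operator that is diagonal on $\sk\oplus\symmtf\oplus\tr$ by Lemma \ref{lem:useful}, and an induction over the index set (the paper runs it as a minimal-element argument). The gap is that the computation that carries the theorem is left undone, and your guess for it is wrong in a way that would break the conclusion. With $L\beta=\star_0[\theta\wedge\beta]=\theta\tr\beta-\beta^{*}$, the operator $L$ acts by $+1$ on $\sk$, by $-1$ on $\symmtf$ and by $+2$ on the trace part. So the indicial equation is $(\gamma+1)\sk\alpha_{\gamma}+(\gamma-1)\symmtf\alpha_{\gamma}+(\gamma+2)\frac{\theta}{3}\tr\alpha_{\gamma}=0$, with roots:
$-1$ on $\sk$ (the boundary gauge directions $x^{-1}[\theta,\xi]$, excluded because the residue is fixed);
$-2$ on the trace (moving the pole, $\partial_c\,\theta/(x-c)$);
$1$ on $\symmtf$.

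There is no root at $0$. That $1$ is the only root with $\Re\gamma>-1$ is exactly what excludes non-integer exponents and any logarithm at orders $x^{-1}$ and $x^{0}$. A root at $0$ on trace or gauge directions, as you anticipate, would leave components of $\alpha_0$ free and allow a $\log x$ term at order zero, contradicting the expansion you want. In particular $\alpha_0$ does not come from a resonance. $L$ is invertible, and the $x^{-1}$ coefficient of the equation is the torsion equation $d_{\alpha_0}\theta=\star_1\theta$, which Lemma \ref{lem:connection-induced-by-torsion} solves uniquely.

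Two further points at $k=1$. First, the operator on $\symmtf$ is $x\partial_x-1$, and $(x\partial_x-1)(x\log x)=x$; your $x\partial_x+1$ has the wrong sign. Second, for an a priori polyhomogeneous $A$ you must also exclude pairs $(1,l)$ with $l\geq 2$, not only produce $\alpha_{1,1}$. The top-log equation makes $\alpha_{1,l}$ trace-free symmetric. The $(\log x)^{l-1}$ equation, $(1+L)\alpha_{1,l-1}=-l\,\alpha_{1,l}$, has left side in $\sk\oplus\tr$, so it forces $\alpha_{1,l}=0$. This is how the paper closes the index-set argument.

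Your count of which jets of $h$ enter is also off by one, for the very reason you flagged (the $x^{-1}$ shift). Since $\frac12[\alpha\wedge\alpha]$ starts with $x^{-2}\frac12[\theta\wedge\theta]=x^{-2}\star_0\theta$, the $x^{k-1}$ coefficient of $\star f_\alpha$ contains $\star_{k+1}\star_0\theta$. At $k=1$ this gives $2\sk\alpha_1+\theta\tr\alpha_1+\alpha_{1,1}=\star_0(\star_2\theta-f_{\alpha_0})$. So $\tr\alpha_1$ and $\alpha_{1,1}$ depend on $h_2$ through $\star_2$; for instance, when $h_1=0$, $\alpha_{1,1}=0$ iff $\tf h_2=-\mathring{\Ric}(h_0)$. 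In general $\alpha_{k,l}$ is determined by $\symmtf\alpha_1$ and $h_0,\dots,h_{k+1}$, which is what the proof of Theorem \ref{thm:coefficients-of-the-instanton-expansion} actually establishes. The dependence claims of items (2)--(4), in the form you (and the statement) give them, would not come out of a correct execution of your plan; they hold with one more jet of $h$.
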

The coefficients $\alpha_{1,1}$ and $\symmtf\alpha_{1}$ are particularly
significant. While the expansion depends a priori on the choice of
$h_{0}\in\mathfrak{c}_{\infty}\left(g\right)$, the coefficient $\alpha_{1,1}$
is in fact an \emph{extrinsic conformal invariant }.
It is the direct analogue of the famous \emph{obstruction tensor}
of Fefferman and Graham, which obstructs the existence of \emph{smooth}
Poincaré--Einstein metrics in odd bulk dimension. For this reason,
we call $\alpha_{1,1}$ the\emph{ $0$-instanton obstruction tensor}.
The second main result justifies this analogy:
\begin{thm*}
(Theorems \ref{thm:0-instanton-obstruction-tensor-conf-invariant},
\ref{thm:obstruction-zero-iff-smooth} and Corollary \ref{cor:asymptotically-PE-implies-vanishing-obstruction})
The $0$-instanton obstruction tensor can be canonically identified
with $2\overline{\W}_{0}^{B}-2\overline{\W}_{0}^{E}$, where $\overline{\W}_{0}$
is the Weyl curvature tensor of $\left(X,\overline{g}\right)$ restricted
to $\partial X$, and $\overline{\W}_{0}^{E},\overline{\W}_{0}^{B}$
is its decomposition into electric and magnetic parts\footnote{In abstract index notation, we have $\overline{\W}^{E}=\overline{\W}_{a00}{^{b}}$
and $\overline{\W}^{B}=\left(*\overline{\W}\right){}_{a00}{^{b}}$
along the fibers of the geodesic boundary defining function $x$.
When restricted to $\partial X$, this decomposition does not depend
on the choice of $h_{0}$. We refer to §\ref{subsec:Geometry-of-4-manifolds-in-a-geodesic-collar}
for more details.}. Furthermore, the following are equivalent:
\begin{enumerate}
\item the $0$-instanton obstruction tensor vanishes;
\item the anti-self-dual Weyl curvature $\overline{\W}^{-}$ vanishes along
$\partial X$;
\item every polyhomogeneous $0$-instanton is smooth modulo gauge.
\end{enumerate}
\end{thm*}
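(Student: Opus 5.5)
We establish three things in turn: the identification of $\alpha_{1,1}$ with $2\overline{\W}_{0}^{B}-2\overline{\W}_{0}^{E}$, the equivalence (1)$\Leftrightarrow$(2), and the equivalence (1)$\Leftrightarrow$(3).

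\emph{Identification of $\alpha_{1,1}$.} Work in a geodesic collar with $g=x^{-2}(dx^{2}+h(x))$ and $h(x)=h_{0}+xh_{1}+O(x^{2})$. Write the geodesic normal family as $\alpha_{h_{0}}(x)=\alpha_{-1}x^{-1}+\alpha_{0}+\dots$. Here $\alpha_{-1}$ is the Clifford/solder form $\cl_{h_{0}}$, read through the identification $T^{*}\partial X\cong\mathfrak{su}(\mathbb{S})$.

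The self-duality equation in temporal gauge splits into an evolution equation of the form
\[
\partial_{x}\alpha=\star_{h(x)}F_{\alpha}
\]
with a suitable $x$-dependent Hodge star and scaling. Substituting the ansatz and matching powers gives a recursion. At each order $k$ the linear operator acting on $\alpha_{k}$ is $k\,\alpha_{k}$ minus the linearization of $\star[\alpha_{-1}\wedge\cdot\,]$. The latter acts algebraically on $\mathfrak{su}(\mathbb{S})\otimes T^{*}\partial X\cong T^{*}\otimes T^{*}$, with eigenvalues distinguishing the trace, skew and trace-free symmetric parts.

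The indicial root of the trace-free symmetric part is $1$. At order $x^{1}$ the equation for $\symmtf\alpha_{1}$ is therefore degenerate: $\symmtf\alpha_{1}$ is free, and the inhomogeneity's $\symmtf$ component must be absorbed by a $x\log x$ term. This gives
\[
\alpha_{1,1}=\symmtf\bigl(\text{order-}x^{0}\text{ forcing}\bigr).
\]
I would compute this forcing explicitly from $\alpha_{0}$, which is the $h_{0}$-Levi-Civita spin connection corrected by $h_{1}$, and from $\partial_{x}$ of the solder form. The forcing is built from the curvature of $\alpha_{0}$, namely Schouten/Cotton-type terms of $h_{0}$, together with $h_{1}$ and its derivatives. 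In particular $\alpha_{1,1}$ depends only on $h_{0},h_{1}$.

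Separately, I would expand the Weyl tensor of $\overline{g}=dx^{2}+h(x)$ at $x=0$ using the Gauss–Codazzi formulas:
\begin{itemize}
\item $\overline{\W}^{E}_{0}$ is the trace-free part of $\overline{\R}_{0ab0}$, which involves $h_{2}$, $h_{1}^{2}$ and the Ricci of $h_{0}$;
\item $\overline{\W}^{B}_{0}$ is the trace-free symmetrized $\star_{h_{0}}$ of the Codazzi tensor $d^{\nabla}h_{1}$, i.e.\ a curl of $h_{1}$.
\end{itemize}
The $h_{2}$ dependence looks problematic, since $\alpha_{1,1}$ depends only on $h_{0},h_{1}$. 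I expect the resolution to be the Gauss equation in dimension $3$: it expresses the trace-free $\overline{\R}_{0ab0}$ through the curvature of $h_{0}$, $h_{1}$, and the ambient Ricci. Since the Weyl tensor removes the ambient Ricci, $\overline{\W}^{E}_{0}$ in fact depends only on $h_{0},h_{1}$, through the trace-free Ricci of $h_{0}$ and quadratic terms in $\mathring{h}_{1}$. Matching the two expressions term by term gives $\alpha_{1,1}=2\overline{\W}_{0}^{B}-2\overline{\W}_{0}^{E}$. Conformal invariance then follows from the conformal covariance of $\overline{\W}$ restricted to the boundary. I expect this bookkeeping to be the main obstacle: tracking sign and orientation conventions in the identification $T^{*}\partial X\cong\mathfrak{su}(\mathbb{S})$, and the normalization that produces the factor $2$.

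\emph{(1)$\Leftrightarrow$(2).} On $\partial X$ the Weyl tensor of a $4$-manifold is determined by its electric and magnetic parts, and
\[
\overline{\W}^{\pm}\leftrightarrow\overline{\W}^{E}\pm\overline{\W}^{B}
\]
under the standard identification of $\Lambda^{\pm}$ with $T^{*}\partial X$ via $dx\wedge\cdot$. Hence $\alpha_{1,1}=0$ if and only if $\overline{\W}^{E}=\overline{\W}^{B}$, which is exactly $\overline{\W}^{-}|_{\partial X}=0$ with the paper's orientation convention.

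\emph{(1)$\Leftrightarrow$(3).} For (3)$\Rightarrow$(1), existence of a smooth-modulo-gauge $0$-instanton forces its geodesic normal family to be smooth. It suffices to exhibit one $0$-instanton, or to apply the argument to any given one: $\alpha_{1,1}$ is determined by the metric alone, so it is independent of the instanton and must vanish. Note also that $\alpha_{1,1}$ is gauge invariant.

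For (1)$\Rightarrow$(3), use the recursion from the expansion theorem. All higher log terms $\alpha_{k,l}$ with $l\geq1$ are generated inductively from $\alpha_{1,1}$. The only resonances are at $k=1$ on the $\symmtf$ component; the other roots do not lie at positive integers in the relevant range. So if $\alpha_{1,1}=0$, induction on $k$ shows every $\alpha_{k,l}$ with $l\ge1$ vanishes, provided $h(x)$ itself is smooth, which holds since $g$ is smooth asymptotically hyperbolic. The normal family is then smooth, and the geodesic gauge gives a smooth representative of $[A]$.
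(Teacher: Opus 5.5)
Your overall architecture matches the paper's. You take $\alpha_{1,1}$ to be the $\symmtf$ component of the order-$x^{0}$ forcing in $\partial_x\alpha=-\star f_\alpha$, with indicial root $1$ on $\symmtf$, and you use Gauss--Codazzi for $\overline{\W}^{E}_0,\overline{\W}^{B}_0$. For (1)$\Leftrightarrow$(2) you use the electric/magnetic description of $\overline{\W}^{-}$, and for (1)$\Leftrightarrow$(3) the index-set theorem plus gauge invariance of the geodesic normal family. Those last two parts are fine.

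The identification step, however, rests on a false jet count. The nonlinearity has leading part $x^{-2}\tfrac12[\theta\wedge\theta]=x^{-2}\star_0\theta$. So the order-$x^{0}$ forcing contains $-\star_2\tfrac12[\theta\wedge\theta]$, where $\star_2$ is the $x^{2}$ Taylor coefficient of the Hodge star of $h(x)$, and you left this term out. Using $d_{\alpha_0}\theta=\star_1\theta$ and $\star_0\star_2+\star_1^2+\star_2\star_0=0$, the inhomogeneous part of the forcing becomes $\star_0(\star_2\theta-f_{\alpha_0})$. By the second-variation formula in Lemma~\ref{lem:computations-in-collar}, $\star_0\star_2$ contains $\overline{\R}^{N\star}_0$, and $\overline{\R}^{N}_0=\shape_0^2-h_2^\sharp$. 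So $\alpha_{1,1}$ depends on $\tf h_2$, not only on $h_0,h_1$.

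Your reconciliation via the Gauss equation is also false. The Gauss equation controls only the tangential components $\overline{\R}_{abcd}$. Up to a sign convention, the electric part is $\tfrac12\tf$ of the tangential ambient Ricci tensor minus Gauss terms in $h_0,h_1$, so the ambient Ricci does not drop out. ``The Weyl tensor is Ricci-free'' is a statement about the full tensor, not about this component.

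Concretely, take a geodesic collar with $h(x)=\delta+x^{2}k$, where $\delta$ is flat and $k$ is constant and trace-free. Then $h_0$ is flat and $h_1=0$, yet $\star_0\star_2=-(k^\sharp)^\star$, so the obstruction tensor is $-k^\sharp$. Meanwhile $\overline{\W}^{E}_0$ is $\tfrac12k^\sharp$ up to a convention-dependent sign. Both vanish when $k=0$, so a matching built from $f_{\alpha_0}$, $h_1$ and an $h_2$-free $\overline{\W}^{E}_0$ cannot close.

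The repair is to keep $\star_2$, which is exactly the paper's bookkeeping:
\begin{itemize}
\item its $\tf\overline{\R}^{N}_0$ term pairs with the $\overline{\R}^{N}$ term of $\overline{\W}^{E}$ in Proposition~\ref{prop:electric-magnetic-Weyl};
\item its $\shape_0^2$ and $\HH_0\shape_0$ terms combine with those of $\star_0\tfrac12[\gamma\wedge\gamma]$, where $\alpha_0=\omega+\gamma$ and $\gamma=-2\shape_0^\star\theta+\tfrac12\HH_0\theta$;
\item the term $\star_0d_\omega(\shape_0^\star\theta)$ yields $\overline{\W}^{B}_0=\symmtf\curl\shape_0$.
\end{itemize}

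A minor point, which you already noticed: (3)$\Rightarrow$(1) needs either the per-instanton reading used in Theorem~\ref{thm:obstruction-zero-iff-smooth} or an existence result.
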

The last part of the paper concerns the \emph{renormalized Yang--Mills
energy} of $0$-instantons. If $A$ is an $\SU\left(2\right)$ instanton
over a \emph{closed }4-manifold, then its Yang--Mills energy\footnote{If $A$ is an $\SU\left(2\right)$ connection, its Yang--Mills energy
is the squared $L^{2}$ norm of its curvature, divided by $8\pi^{2}$.} is an integer, completely dependent on the topology of the $\SU\left(2\right)$
bundle. In our context, the Yang--Mills energy of a $0$-instanton
is necessarily \emph{infinite}. However, it is of frequent practice
in the theory of Poincaré--Einstein manifolds to \emph{renormalize}
divergent integrals. This procedure consists in choosing a metric
$h_{0}\in\mathfrak{c}_{\infty}\left(g\right)$, computing the integral
on the compact submanifold $X_{t}$ obtained by removing the infinite-volume
end $[0,t)\times\partial X$ from the geodesic collar induced by $h_{0}$,
and then taking an expansion as $t\to0$; the renormalized integral
is the constant term in this expansion. The coefficients of this expansion
generally depend on $h_{0}$, but the constant term is often independent
of the choice of $h_{0}$ in $\mathfrak{c}_{\infty}\left(g\right)$.
This leads to very interesting conformal invariants; for example,
Anderson proved in \cite{AndersonVolumeRenormalization} that, if
$\left(X^{4},g\right)$ is Poincaré--Einstein, then the renormalized
\emph{volume} $^{R}\mathrm{Vol}\left(X,g\right)$ satisfies the beautiful
identity
\[
^{R}\mathrm{Vol}\left(X,g\right)=\frac{4}{3}\pi^{2}\chi\left(X\right)-\frac{1}{6}\int_{X}\left|\W\right|_{g}^{2}\dVol_{g}.
\]
An analogous result was later proved by Alexakis--Mazzeo in \cite{AlexsakisMazzeo}
for the\emph{ }renormalized \emph{area }of minimal surfaces: if $\Sigma$
is a properly embedded minimal surface of a Poincaré--Einstein 3-manifold
$\left(X^{3},g\right)$, then its renormalized area satisfies the
equation
\[
^{R}\mathrm{Area}\left(\Sigma\right)=-2\pi\chi\left(\Sigma\right)-\frac{1}{2}\int_{\Sigma}\left|\mathring{\II}\right|_{\iota^{*}g}^{2}\dVol_{\iota^{*}g},
\]
where $\mathring{\II}$ is the trace-free second fundamental form
of $\Sigma$ in $X$. Note that both $^{R}\mathrm{Vol}\left(X,g\right)$
and $^{R}\mathrm{Area}\left(\Sigma\right)$ are conformal invariants.
Our final result provides a similar statement for the renormalized
Yang--Mills energy:
\begin{thm*}
(Theorem \ref{thm:renormalized-YM-energy}) Let $A$ be a $0$-instanton
on a conformally compact 4-manifold $\left(X^{4},g\right)$. If $g$
is asymptotically Poincaré--Einstein to third order, then the renormalized
Yang--Mills energy of $A$ satisfies
\[
^{R}\mathcal{E}_{\mathrm{YM}}\left(A\right)=-\CS_{X}\left(\partial X,\mathfrak{c}_{\infty}\left(g\right)\right).
\]
Here $\CS_{X}\left(\partial X,\mathfrak{c}_{\infty}\left(g\right)\right)$
is the Chern--Simons invariant of the conformal infinity. In particular,
$^{R}\mathcal{E}_{\mathrm{YM}}\left(A\right)$ is independent of $A$
and conformally invariant.
\end{thm*}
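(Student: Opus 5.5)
The plan is to reduce the renormalized energy to a boundary term by Chern--Weil theory on the truncated manifolds $X_{t}$, and then to identify the finite part of that term using the normal form of Theorems \ref{thm:index-set-0-instantons} and \ref{thm:coefficients-of-the-instanton-expansion}.

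Since $A$ is self-dual, $|F_{A}|^{2}\dVol_{g}=-\tr(F_{A}\wedge F_{A})$, up to the fixed normalization that makes the energy equal $\frac{1}{8\pi^{2}}\int|F_A|^2$. This $4$-form is conformally invariant and closed. On $X_{t}$ we therefore get
\[
\mathcal{E}_{\mathrm{YM}}(A;X_{t})=-\frac{1}{8\pi^{2}}\int_{X_{t}}\tr(F_{A}\wedge F_{A})=-\CS\big(A_{|\{x=t\}}\big)+c,
\]
where $c$ is the relative Chern--Simons term over the compact part. More precisely, I would fix a reference smooth connection on the compact region and transgress.

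The key step is to write the boundary connection in the geodesic normal gauge. On the slice $\{x=t\}\cong\partial X$ the restriction of $A$ is the family $\alpha_{h_{0}}(t)$ on $\mathbb{S}$, so the divergent part of the energy is $-\CS(\alpha_{h_{0}}(t))$ as $t\to0$. I would insert the expansion $\alpha_{-1}t^{-1}+\alpha_{0}+\alpha_{1,1}t\log t+\alpha_{1}t+O(t^{2}\log^{2}t)$ into $\CS(\alpha)=\frac{1}{8\pi^{2}}\int\tr(\alpha\wedge d\alpha+\frac{2}{3}\alpha^{3})$, expanded around $\alpha_{0}$. Here $\alpha_{-1}$ is essentially $\cl_{h_{0}}$, i.e.\ Clifford multiplication identifying $T^{*}\partial X$ with $\mathfrak{su}(\mathbb{S})$. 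This gives terms of orders $t^{-3}$, $t^{-2}$, $t^{-1}$, $\log t$ and $t^{0}$. The $t^{-3}$ term is the volume $\int\tr(\cl^{3})$, and the $t^{-2}$ and $t^{-1}$ terms involve $\tr(\cl\wedge d_{\alpha_0}\cl)$ and its pairings with $\alpha_{1}$. All of these are discarded in renormalization.

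The constant term then consists of $-\CS(\alpha_{0})$ plus cross terms between $t^{-1}\alpha_{-1}$ and $t\alpha_{1}$, and between $t^{-2}$ quadratic terms and $t^{2}$ coefficients. A $\log t$ term coming from $\alpha_{1,1}$ appears too, and must vanish for the renormalized energy to be well defined. Under the hypothesis that $g$ is asymptotically Poincaré--Einstein to third order, Corollary \ref{cor:asymptotically-PE-implies-vanishing-obstruction} gives $\alpha_{1,1}=0$. The same hypothesis makes $\partial_{x}h|_{0}=0$ and fixes $\partial_x^2h|_0$ as minus the Schouten tensor, and $\partial_x^3h|_0$ is trace-free.

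Using part (4) of the main expansion theorem, I would then compute the relevant coefficients explicitly. First, $\alpha_{0}$ is the spin Levi-Civita connection of $h_{0}$. Second, $\tr\alpha_{1}$ and $\sk\alpha_{1}$ vanish or are Schouten-type. Third, the pairing $\int\tr(\cl\wedge\alpha_{1})$ that enters at order $t^{0}$ involves only the trace and skew parts, since $\symmtf\alpha_{1}$ pairs to zero against $\cl$ by Schur-type orthogonality. Consequently the free datum $\symmtf\alpha_1$ drops out, which is what makes the result independent of $A$. The constant term should then reduce to $-\CS(\nabla^{\mathbb{S}}_{h_{0}})$, corrected by the compact-part constant $c$. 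Together these give the relative Chern--Simons invariant $\CS_{X}(\partial X,\mathfrak{c}_\infty(g))$ of the Levi-Civita spin connection, which is known to be conformally invariant in dimension $3$.

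The main obstacle is this bookkeeping of the constant term: verifying that every cross term contributing at order $t^{0}$ either vanishes or is a total derivative on $\partial X$. This needs the precise formulas for $\tr\alpha_{1}$ and $\sk\alpha_{1}$ and for $\alpha_{2,l}$, and the PE-to-third-order hypothesis is used exactly there. I would organize the computation by writing $\alpha(t)=\alpha_{0}+\beta(t)$ and using
\[
\CS(\alpha_{0}+\beta)=\CS(\alpha_{0})+\frac{1}{8\pi^{2}}\int\tr\big(2\beta\wedge F_{\alpha_{0}}+\beta\wedge d_{\alpha_{0}}\beta+\tfrac{2}{3}\beta^{3}\big).
\]
I would then expand each piece in $t$ separately, relying on the Clifford identity $\cl\wedge\cl\propto\cl\circ\star$ to collapse the products.
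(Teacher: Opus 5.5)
Your setup matches the paper's. You transgress against a reference connection $D$ equal to the Levi-Civita spin connection $\omega$ on the collar. Writing $\beta(t)=\theta t^{-1}+\alpha_{1}t+\alpha_{2}t^{2}+O(t^{3})$, everything then reduces to expanding $\int_{\partial X}\tr(2\beta\wedge f_{\omega}+\beta\wedge d_{\omega}\beta+\frac{2}{3}\beta\wedge\beta\wedge\beta)$ in $t$. (To drop the possible $t^{2}\log t$ term $\alpha_{2,1}$, you need more than $\alpha_{1,1}=0$ from Corollary~\ref{cor:asymptotically-PE-implies-vanishing-obstruction}. You need the clause of Theorem~\ref{thm:index-set-0-instantons} that the whole expansion is then smooth, i.e.\ the smooth gauge of Theorem~\ref{thm:obstruction-zero-iff-smooth}; otherwise $\tr(\theta\wedge\theta\wedge\alpha_{2,1})$ produces a $\log t$ term.)

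The genuine gap is that the only non-formal step, the vanishing of the $t^{0}$ coefficient, is exactly what you defer as ``the main obstacle'', and what you say about it is misplaced. That coefficient is
\[
\int_{\partial X}\tr\left(\theta\wedge d_{\omega}\alpha_{1}+2\,\theta\wedge\theta\wedge\alpha_{2}\right).
\]
The Schur-type pairing you invoke, $\tr(\theta\wedge\theta\wedge\alpha_{1})$, sits at order $t^{-1}$, not $t^{0}$. At order $t^{0}$ the $\alpha_{1}$-contribution is the derivative term $\tr(\theta\wedge d_{\omega}\alpha_{1})$. It vanishes for a different reason: Stokes plus $d_{\omega}\theta=0$, or pointwise because $\sk\alpha_{1}=0$ and the curl of a symmetric tensor is trace-free. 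The $\alpha_{2}$-term is neither zero nor exact a priori. By Schur it equals $-\int_{\partial X}\tr\alpha_{2}\,\dVol_{h_{0}}$, and $\alpha_{2}$ depends on $\symmtf\alpha_{1}$, so independence of $A$ cannot be read off from $\alpha_{1}$ alone.

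The missing ingredient is computing $\tr\alpha_{2}$ from the order-$x$ coefficient of the flow equation $\partial_{x}\alpha=-\star f_{\alpha}$. Part (4) of Theorem~\ref{thm:coefficients-of-the-instanton-expansion} only says $\alpha_{2}$ is determined; it gives no formula. Using $\star_{1}=0$, $\alpha_{0}=\omega$, $d_{\omega}\theta=0$ and $\frac{1}{2}[\theta\wedge\theta]=\star_{0}\theta$, that coefficient reads $2\alpha_{2}=-\star_{0}d_{\omega}\alpha_{1}-\star_{0}[\theta\wedge\alpha_{2}]-\star_{3}\star_{0}\theta$. Lemma~\ref{lem:useful} and a trace then give $4\tr\alpha_{2}=-\tr(\star_{0}d_{\omega}\alpha_{1})-\tr(\star_{3}\star_{0}\theta)$.

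The first trace vanishes because the curl of the symmetric tensor $\alpha_{1}$ is trace-free. This, not orthogonality at the level of $\alpha_{1}$, is where the free datum $\symmtf\alpha_{1}$ really drops out. For the second, items (9)--(10) of Lemma~\ref{lem:computations-in-collar} with $\shape_{0}=\HH_{0}=0$ give $\tr(\star_{3}\star_{0}\theta)=\frac{1}{6}\partial_{x}^{2}\HH|_{x=0}=-\frac{1}{2}\tr_{h_{0}}h_{3}$. So the constant term of the boundary integral is a fixed multiple of $\int_{\partial X}\tr_{h_{0}}h_{3}\,\dVol_{h_{0}}$. It vanishes exactly because asymptotically Poincar\'e--Einstein to third order forces $\tr_{h_{0}}h_{3}=0$. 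Without this computation your argument never locates where the third-order hypothesis is used. Under only second-order decay, the renormalized energy would differ from $-\CS_{X}$ by this $\tr_{h_{0}}h_{3}$ term.
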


\subsection*{Future directions}

The work leading up to this paper was initiated by the author in his
PhD thesis \cite{UsulaPhD}. There, the author started the study of
the \emph{moduli space} of $0$-instantons, with the long-term goal
of obtaining new enumerative invariants. This program is, of course,
closely related to Witten's program concerning knot invariants: Witten's
idea of ``counting'' solutions of the Kapustin--Witten equation,
subject to the Nahm pole boundary condition, requires at least proving
that the moduli space of these solutions is smooth, finite-dimensional,
and compact(ifiable). Witten's program has been carried over only
partially: while the local theory presents no major problems, and
is effectively settled in \cite{MazzeoWittenNahmI, MazzeoWittenNahmII},
the compactness theory still seems very challenging.

The common feature of the self-duality and the Kapustin--Witten equations
(without knots), subject to the Nahm pole boundary condition, is that
they are both \emph{semilinear $0$-elliptic equations} (modulo gauge).
In simple terms, $0$-ellipticity is the natural notion of ellipticity
for geometric operators on conformally compact manifolds; the usual
Laplacians, Dirac operators etc. are all $0$-elliptic operators.
The study of these operators was initiated by Mazzeo in \cite{MazzeoPhD},
Mazzeo--Melrose in \cite{MazzeoMelroseResolvent}, and extended by
Mazzeo in \cite{MazzeoEdgeI}. While standard elliptic operators on
closed manifolds are always Fredholm, the Fredholm theory for $0$-elliptic
operators is much more delicate; in brief, there is a scale of \emph{weighted}
(Sobolev, Hölder, etc.) spaces associated to the geometry, and Fredholmness
for any given weight depends on the invertibility of a family of microlocal
models for the operator, at each point of the boundary.

Mazzeo and Witten proved in \cite{MazzeoWittenNahmI} that the moduli
space of solution of the Kapustin--Witten equations, satisfying the
Nahm pole boundary condition, is indeed a finite-dimensional manifold.
This requires showing that a certain $0$-elliptic operator (the linearized
gauge-fixed equation) is invertible, on an appropriate scale of weighted
spaces. It turns out that the analogous result does \emph{not }hold
for $0$-instantons; more precisely, if one fixes the Nahm pole ``residue''
$\mathfrak{r}$ (cf. §\ref{subsec:Nahm-poles-and-spin-structures-on-the-boundary}),
the moduli space $\mathcal{M}_{\mathfrak{r}}$ of $0$-instantons
with Nahm pole residue $\mathfrak{r}$ is an \emph{infinite-dimensional
}manifold. Indeed, the linearized, gauge-fixed self-duality equation
(which is a $0$-elliptic Dirac operator) is only \emph{semi-Fredholm
surjective}, but with infinite-dimensional kernel, on the appropriate
scale of weighted spaces. These proofs have been developed by the
author in his PhD thesis \cite{UsulaPhD}; since they require completely
different techniques than the ones used here, we decided to develop
this part of the theory in detail in an upcoming paper \cite{UsulaNahmPolesII}.

The fact that the moduli space $\mathcal{M}_{\mathfrak{r}}$ is infinite-dimensional
is to be expected. Indeed, even for standard instantons without singularities,
the moduli space on a compact 4-manifold with boundary is generically
infinite-dimensional. Take for example the case of $S_{+}^{4}$, the
closed upper half of the round 4-sphere. As beautifully explained
by Atiyah in the survey \cite{AtiyahNewInvariants}, the problem of
finding $\SU\left(2\right)$ instantons on $S_{+}^{4}$ is a non-linear,
4-dimensional version of the problem of finding holomorphic functions
on the upper half of $\mathbb{C}\mathrm{P}^{1}$. There are infinitely
many holomorphic functions on $\mathbb{C}\mathrm{P}_{+}^{1}$, but
once one fixes an appropriate boundary value --- a function on $S^{1}$
with only non-negative Fourier coefficients --- the solution becomes
unique. Moreover, solutions which extend holomorphically to the whole
of $\mathbb{C}\mathrm{P}^{1}$ form a 1-dimensional space, corresponding
to the intersection space of the boundary values of holomorphic and
anti-holomorphic functions on $\mathbb{C}\mathrm{P}_{+}^{1}$ ---
the constants. Similarly, the moduli space of instantons on $S^{4}$
can be seen as the ``intersection locus'' of the boundary values
of \emph{self-dual and anti-self-dual} instantons on $S_{+}^{4}$.
This observation leads to very interesting mathematics; we refer to
\cite{DonaldsonFloer} and the citations thereof, for more details
on this topic.

In the present case, we have a similar picture. Indeed, the infinite-dimensionality
of $\mathcal{M}_{\mathfrak{r}}$ is neatly consistent with the results
concerning the asymptotic expansion of $0$-instantons. We have seen
that if $A$ is a $0$-instanton and $\alpha_{h_{0}}\left(x\right)$
is its geodesic normal family associated to a metric $h_{0}\in\mathfrak{c}_{\infty}\left(g\right)$,
then the term $\symmtf\alpha_{1}$ in the asymptotic expansion of
$\alpha_{h_{0}}\left(x\right)$ is gauge-invariant and formally free.
The term $\symmtf\alpha_{1}$ can be interpreted canonically as a
symmetric trace-free $2$-tensor on $\partial X$, and therefore we
can define a ``boundary map''
\begin{align*}
\beta_{h_{0}}:\mathcal{M}_{\mathfrak{r}} & \to C^{\infty}\left(\partial X;\mathring{S}^{2}\left(T^{*}\partial X\right)\right)\\
\left[A\right] & \mapsto\symmtf\alpha_{1}.
\end{align*}
While $\mathcal{M}_{\mathfrak{r}}$ is itself infinite-dimensional,
one expects that the \emph{fibers} of this boundary map will indeed
be (generically) finite-dimensional manifolds. Moreover, one expects
that the range space of $\beta_{h_{0}}$, intersected with the analogous
space of boundary values for \emph{anti-self-dual} $0$-instantons,
will yield a finite-dimensional moduli space. Analogously to the closed
case, this intersection locus should correspond to the moduli space
of $0$-instantons on $S^{4}$ with a Nahm pole along the equatorial
$S^{3}$, subject to appropriate matching conditions along this hypersurface.
More in general, one could consider a closed oriented conformal 
4-manifold $X$, equipped with a closed embedded hypersurface $Y$,
and study the moduli space of $0$-instantons on $X$ satisfying the
Nahm pole boundary condition at $Y$, subject to appropriate matching
conditions along $Y$. This could potentially lead to new enumerative
invariants of 4-manifolds equipped with an embedded hypersurface.
On the technical side, this study requires the $0$-elliptic theory
of Mazzeo--Melrose, along with a recent extension of this theory
recently developed by the author in \cite{Usula0BVP}. We will discuss
these topics in depth in \cite{UsulaNahmPolesII}.

\subsection*{Acknowledgements}

The author is grateful to Joel Fine, for his continuous encouragement
and for many mathematical discussions and suggestions on the topics
of this paper. At various stages, the author was supported by KU Leuven,
the ERC consolidator grant 646649 \textquotedblleft SymplecticEinstein\textquotedblright ,
the EoS grant 40007524 ``Beyond Symplectic Geometry'', and the Wiener--Anspach
Foundation.

\section{\label{sec:Conformally-compact-geometry}Conformally compact geometry}

\subsection{\label{subsec:Definitions-and-main}Definitions and main properties}

Let $X^{n+1}$ be a compact manifold with boundary $\partial X$ and
interior $X^{\circ}$. Recall that a \emph{boundary defining function
}for $X$ is a smooth function $x:X\to[0,+\infty)$ such that $x^{-1}(0)=\partial X$
and its differential $dx$ is nowhere-vanishing along $\partial X$.
Given such a function $x$, any other boundary defining function $x'$
takes the form $x'=e^{\varphi}x$ for some smooth function $\varphi\in C^{\infty}\left(X\right)$.
\begin{defn}
A metric $g$ on the interior $X^{\circ}$ is said to be \emph{conformally
compact} if, for some (hence every) boundary defining function $x$,
the conformally rescaled metric $x^{2}g$ extends to a smooth metric
on the entire manifold $X$.
\end{defn}

\begin{example}
The prototypical example of a conformally compact manifold is hyperbolic
space. Consider the closed unit ball $B^{n}\subset\mathbb{R}^{n}$
with coordinates $y=\left(y^{1},\dots,y^{n}\right)$. The hyperbolic
metric on the interior of $B^{n}$ is given by 
\[
g=\frac{4dy^{2}}{(1-\left|y\right|^{2})^{2}}.
\]
Observing that $\rho=1-\left|y\right|^{2}$ is a boundary defining
function for $B^{n}$, it follows immediately that $g$ is conformally
compact.
\end{example}

Conformally compact manifolds share many geometric properties with
hyperbolic space. Let $\left(X,g\right)$ be conformally compact and
denote the restriction of $g$ to the interior by $g^{\circ}$. The
following properties hold: 
\begin{enumerate}
\item $\left(X^{\circ},g^{\circ}\right)$ is complete, with bounded geometry
and infinite volume.
\item The boundary is located ``at infinity'', meaning that any curve
$\gamma:[0,+\infty)\to X^{\circ}$ approaching a limit $\gamma\left(\infty\right)\in\partial X$
has infinite length.
\item The metric $g$ induces a conformal class on the boundary, known as
the \emph{conformal infinity} of $g$, defined by: 
\[
\mathfrak{c}_{\infty}\left(g\right):=\left\{ \left(x^{2}g\right){}_{|\partial X}\mid x\text{ is a boundary defining function}\right\} .
\]
\item The metric $g^{\circ}$ is ``asymptotically negatively curved''.
Specifically, for any sequence of points $p_{k}\in X^{\circ}$ converging
to $p_{\infty}\in\partial X$, and any sequence of tangent 2-planes
$\pi_{k}\subseteq T_{p_{k}}X^{\circ}$, the sectional curvatures $\kappa_{g^{\circ}}\left(\pi_{k}\right)$
converge to $-\left|dx\right|{}_{x^{2}g}^{2}\left(p_{\infty}\right)$.
\end{enumerate}
The restriction $\kappa_{\infty}^{g}:=-\left|dx\right|{}_{x^{2}g|\partial X}^{2}$
is a fundamental Riemannian invariant of $\left(X,g\right)$, independent
of the choice of $x$. Justified by the last point above, we call
$\kappa_{\infty}^{g}$ the \emph{sectional curvature at infinity }of
$\left(X,g\right)$, and when $\kappa_{\infty}^{g}\equiv-1$, we say
that $\left(X,g\right)$ is \emph{asymptotically hyperbolic}. These
manifolds admit a specialized class of boundary defining functions:
\begin{defn}
Let $\left(X,g\right)$ be asymptotically hyperbolic and fix a representative
$h_{0}\in\mathfrak{c}_{\infty}\left(g\right)$. A boundary defining
function $x$ is called a \emph{geodesic boundary defining function}
for $h_{0}$ if $\left(x^{2}g\right){}_{|\partial X}=h_{0}$ and $\left|dx\right|_{x^{2}g}^{2}\equiv1$
on a neighborhood of $\partial X$ in $X$.
\end{defn}

Graham and Lee established (cf. §5 of \cite{GrahamLeeEinstein}) that,
for any $h_{0}\in\mathfrak{c}_{\infty}\left(g\right)$, there exists
a geodesic boundary defining function inducing $h_{0}$, unique in
the collar neighborhood of the boundary where $\left|dx\right|_{x^{2}g}^{2}\equiv1$.
Geodesic boundary defining functions are extremely useful, in that
they allow the construction of a \emph{normal form }for asymptotically
hyperbolic manifolds. Let $x$ be a geodesic boundary defining function
inducing $h_{0}$. The $x^{2}g$-gradient of $x$, denoted here by
$\partial_{x}$, is transverse to $\partial X$ and inward-pointing.
The flow generated by $\partial_{x}$ then determines a collar neighborhood
$[0,\varepsilon)\times\partial X\hookrightarrow X$ for sufficiently
small $\varepsilon>0$. Provided $\varepsilon$ is small enough that
$\left|dx\right|_{x^{2}g}^{2}\equiv1$ holds throughout the collar,
$x$ becomes the unique geodesic boundary defining function for $h_{0}$
in this region. Furthermore, the metric $g$ can be written in this
collar as
\[
g=\frac{dx^{2}+h\left(x\right)}{x^{2}},
\]
where $h:[0,\varepsilon)\to C^{\infty}\left(\partial X;S^{2}\left(T^{*}\partial X\right)\right)$
is a smooth family of metrics on the boundary satisfying $h\left(0\right)=h_{0}$.
\begin{defn}
The normal form for $g$ described above is called the \emph{geodesic
normal form} for $g$ associated to $h_{0}$.
\end{defn}

Although a conformally compact metric $g$ is formally a metric only
on the interior $X^{\circ}$, it is more appropriate to view it as
a smooth geometric object on the entire compact manifold $X$. This
perspective was formalized by Mazzeo in §2.A of \cite{MazzeoPhD},
drawing on the work of Melrose and Mendoza \cite{MelroseMendozaTotallyCharacteristic, MelroseAPS}.
Let $\mathcal{V}_{0}\left(X\right)$ denote the space of smooth vector
fields on $X$ that vanish at the boundary; we call these \emph{$0$-vector
fields}. As $\mathcal{V}_{0}\left(X\right)$ is a locally finitely
generated, projective module over $C^{\infty}\left(X\right)$, the
Serre--Swan Theorem implies it is isomorphic to the space of sections
of a smooth vector bundle, which we call the \emph{$0$-tangent bundle},
denoted $^{0}TX$. We can analogously construct the ``$0$-version''
of any other tensor bundle. More precisely, if $E$ is a tensor bundle
associated to $\Fr_{\GL\left(n+1,\mathbb{R}\right)}\left(TX\right)$
via a representation of $\GL\left(n+1,\mathbb{R}\right)$, the corresponding
$0$-bundle $^{0}E$ is associated to $\Fr_{\GL\left(n+1,\mathbb{R}\right)}\left(^{0}TX\right)$
via the same representation. Key examples include:
\begin{enumerate}
\item the \emph{$0$-cotangent bundle} $^{0}T^{*}X$ (the dual of $^{0}TX$);
\item the bundle of \emph{$0$-$k$-forms} $^{0}\Lambda^{k}$ (the $k$-th
exterior power of $^{0}T^{*}X$);
\item the bundle of \emph{symmetric $0$-$2$-tensors} $S^{2}\left(^{0}T^{*}X\right)$.
\end{enumerate}
Sections of $^{0}E$ correspond to sections of $E$ with specific
decay or singularity rates at the boundary. Specifically, if $E$
is a tensor bundle of type $\left(r,s\right)$, a section $\omega$
of $^{0}E$ can be written as $\omega=x^{r-s}\overline{\omega}$,
where $\overline{\omega}$ is a smooth section of $E$. For instance,
a smooth section of $^{0}\Lambda^{k}$ corresponds to an $O\left(x^{-k}\right)$
$k$-form. In this framework, a conformally compact metric on $X$
is simply a smooth bundle metric on $^{0}TX$.

Let $\left(X^{n+1},g\right)$ be a conformally compact manifold. Then,
for every $p\in\partial X$, $X$ has an asymptotic half-space model
$X_{p}$, with a \emph{hyperbolic }metric of constant sectional curvature
$\kappa_{\infty}^{g}\left(p\right)$. As a manifold, $X_{p}$ is simply
the closed inward-pointing half of $T_{p}X$. In order to define $g_{p}$,
choose a boundary defining function $x$ for $X$, call $h_{0}=\left(x^{2}g\right)_{|\partial X}$,
and let $y^{1},...,y^{n}$ be normal coordinates for $\left(\partial X,h_{0}\right)$
centered at $p$. We call coordinates $x,y^{1},...,y^{n}$ for $X$
constructed in this way \emph{normal half-space coordinates }for $\left(X,g\right)$
associated to $h_{0}$. These coordinates induce global linear coordinates
for $T_{p}X$, which we still denote by $x,y^{1},...,y^{n}$ with
slight abuse of notation. In these coordinates, the half-space $X_{p}$
is defined by $x\geq0$, and the hyperbolic metric $g_{p}$ is
\[
g_{p}:=\frac{dx^{2}}{-\kappa_{\infty}^{g}\left(p\right)x^{2}}+\frac{dy^{2}}{x^{2}}.
\]
One can check that the definition of $g_{p}$ does not depend on the
choices made.

\subsection{\label{subsec:Poincar=0000E9=002013Einstein-metrics}Poincaré--Einstein
metrics}

So far, the conformally compact metrics considered were smooth. We
now consider a larger space of \emph{polyhomogeneous} conformally
compact metrics: more precisely, we consider $O\left(1\right)$ polyhomogeneous
bundle metrics on $^{0}TX$. We refer to §\ref{subsec:Polyhomogeneity}
and the references cited there, for details on polyhomogeneity.
\begin{defn}
A polyhomogeneous conformally compact metric $\left(X^{n+1},g\right)$
is called \emph{Poincaré--Einstein }if $g$ satisfies the equation
$\Ric\left(g\right)+ng=0$.
\end{defn}

\begin{rem}
In \cite{FeffermanGrahamAmbient}, Fefferman and Graham implicitly
assume that the metric is polyhomogeneous.
\end{rem}

Let $\left(X^{n+1},g\right)$ be a polyhomogeneous Poincaré--Einstein
manifold, with $n+1\geq3$. The specific normalization for the Einstein
equation implies that $g$ has the same scalar curvature of the hyperbolic
space $\HH^{n+1}$. It turns out that the Poincaré--Einstein condition
then implies that $g$ must be \emph{asymptotically hyperbolic}. Therefore,
given $h_{0}\in\mathfrak{c}_{\infty}\left(g\right)$, we can consider
a geodesic boundary defining function $x$ inducing $h_{0}$, and
the corresponding geodesic normal form
\[
g=\frac{dx^{2}+h\left(x\right)}{x^{2}}.
\]
$h\left(x\right)$ is then a $O\left(1\right)$ polyhomogeneous family
of metrics on $\partial X$ defined on $[0,\varepsilon)$, such that
$h\left(0\right)=h_{0}$. In \cite{FeffermanGrahamAmbient}, Fefferman
and Graham studied in detail the asymptotic expansion of $h\left(x\right)$
as $x\to0$. In particular, they showed that the coefficients of the
expansion satisfy various interesting properties:
\begin{enumerate}
\item if $n+1$ is \emph{even} or equal to $3$, then $h\left(x\right)$
has an\emph{ }expansion of the form
\[
h\left(x\right)\sim h_{0}+h_{2}x^{2}+\cdots+h_{n-1}x^{n-1}+h_{n}x^{n}+o\left(x^{n}\right);
\]
\item if $n+1\geq4$ is \emph{odd}, then $h\left(x\right)$ has an expansion
of the form
\[
h\left(x\right)\sim h_{0}+h_{2}x^{2}+\cdots+h_{n-2}x^{n-2}+\mathcal{O}x^{n}\log x+h_{n}x^{n}+o\left(x^{n}\right).
\]
\end{enumerate}
In both cases, the odd terms $h_{2k-1}$ with $1\leq2k-1<n$ are all
zero, and the terms $h_{2k}$ with $2\leq2k<n$ are all determined
by $h_{0}$ via differential relations. In the case $n+1$ even, the
$o\left(x^{n}\right)$ remainder contains only terms of the form $x^{k}h_{k}$
with $k>n$ (i.e. the expansion is smooth), and these coefficients
$h_{k}$ are determined by $h_{0}$ and $h_{n}$ by differential relations.
When $n+1$ is odd, instead, we potentially see log terms: the coefficient
$\mathcal{O}$ of the $x^{n}\log x$ term is a fundamental \emph{conformal
invariant} of the conformal infinity $\mathfrak{c}_{\infty}\left(g\right)$,
and it obstructs the smoothness of the remaining part of the expansion.
More precisely, the $o\left(x^{n}\right)$ part of the expansion can
a priori contain terms of the form $h_{nk+i,l}x^{nk+i}\left(\log x\right)^{l}$
with $k\geq1$, $0\leq i<n$, and $l\leq k$: however, the coefficients
with $l>0$ all vanish if $\mathcal{O}\equiv0$. For this reason,
$\mathcal{O}$ is known as the \emph{obstruction tensor }of $\mathfrak{c}_{\infty}\left(g\right)$.
\begin{rem}
\label{rem:regularity-PE}Using the concept of log-smoothness introduced
in Definition \ref{def:log-smoothness}, we can reformulate the regularity
result above: if $n+1$ is odd, then $h\left(x\right)$ is \emph{log-smooth
of order $n$}. In fact, the main result of \cite{ChruscielDelayLeeSkinnerBoundaryRegularity}
is that, if a $C^{2}$ Poincaré--Einstein metric $g$ has smooth
conformal infinity, then it is $C^{1,\lambda}$ diffeomorphic to a
Poincaré--Einstein metric which is either smooth (if $n+1$ is even
or equal to $3$) or log-smooth of order $n$ (if $n+1\geq4$ is odd).
Moreover, if $n+1\geq4$ is odd, then $g$ is $C^{1,\lambda}$ diffeomorphic
to a \emph{smooth} Poincaré--Einstein metric if and only if the obstruction
tensor vanishes.
\end{rem}

\begin{rem}
\label{rem:asymptotically-PE-equals-condition-on-h1-and-h2}Assume
that $n+1\geq4$. Let $h_{0}\in\mathfrak{c}_{\infty}\left(g\right)$,
and consider the geodesic normal form of $g$ associated to $h_{0}$
and the corresponding expansion $h\left(x\right)\sim h_{0}+h_{1}x+h_{2}x^{2}+o\left(x^{2}\right)$.
Then $g$ is asymptotically Poincaré--Einstein to second order, i.e.
$\Ric\left(g\right)+ng=o\left(x^{2}\right)$ as a section of $S^{2}\left(^{0}T^{*}X\right)$,
if and only if $h_{1}=0$ and $h_{2}=-\PP\left(h_{0}\right)$ where
$\PP\left(h_{0}\right)$ is the \emph{Schouten tensor} of the metric
$h_{0}$ (cf. Theorem 7.4 of \cite{FeffermanGrahamAmbient}). It is
easy to prove (cf. Lemma \ref{lem:computations-in-collar}) that $h_{1}=-2\II$,
where $\II$ is the second fundamental form of $\left(\partial X,h_{0}\right)$
in $\left(X,x^{2}g\right)$. Therefore, $h_{1}=0$ if and only if
$\left(\partial X,h_{0}\right)$ is totally geodesic\emph{ }in $\left(X,x^{2}g\right)$.
\end{rem}

\section{\label{sec:0-connections}$0$-connections}

In this section we introduce \emph{$0$-connections}, the main characters
of this paper. Fix a vector bundle $E\to X$, and denote by $E^{\circ}$
the restriction of $E$ to the interior $X^{\circ}$.
\begin{defn}
A \emph{$0$-connection} on a vector bundle $E\to X$ is a $\mathbb{R}$-bilinear
map
\[
A:\mathcal{V}_{0}\left(X\right)\times C^{\infty}\left(X;E\right)\to C^{\infty}\left(X;E\right)
\]
which is $C^{\infty}\left(X\right)$ linear in the first entry, and
satisfies the Leibniz identity
\[
A_{V}\left(fs\right)=\left(Vf\right)s+fA_{V}s
\]
for every $V\in\mathcal{V}_{0}\left(X\right)$, $f\in C^{\infty}\left(X\right)$,
and $s\in C^{\infty}\left(X;E\right)$.
\end{defn}

In other words, $0$-connections allow to take directional derivatives
along $0$-vector fields on $X$. Since $0$-vector fields are unconstrained
in the interior, the restriction of a $0$-connection $A$ on $E$
to $E^{\circ}$ determines a standard connection on $E^{\circ}$.
Moreover, since $\mathcal{V}_{0}\left(X\right)\subseteq\mathcal{V}\left(X\right)$,
standard connections on $E$ are $0$-connections. On the other hand,
generically, the converse is not true. Indeed, essentially by definition,
every $0$-connection on $E$ can be written in the form $A=D+a$,
where $D$ is a standard connection on $E$ and $a$ is a $\mathfrak{gl}\left(E\right)$-valued
$0$-$1$-form on $X$. Thus, $A$ is a standard connection if and
only if $a$ is a $1$-form on $X$, i.e. equivalently it is a $O\left(x\right)$
$0$-$1$-form. The restriction of $a$ to $\partial X$ therefore
obstructs $A$ to extend from the interior to a standard connection:
\begin{defn}
Let $A$ be a $0$-connection on $E$. The \emph{residue} of $A$
is the bundle map $\res_{A}:{^{0}TX_{|\partial X}}\to\mathfrak{gl}\left(E_{|\partial X}\right)$
defined as the restriction $\left(A-D\right)_{|\partial X}$, where
$D$ is an arbitrary standard connection on $E$.
\end{defn}

\begin{rem}
The definition above is well posed. Indeed, if $D_{1},D_{2}$ are
two standard connections on $E$, then their difference $D_{1}-D_{2}$
is a smooth section of $\Lambda^{1}\otimes\mathfrak{gl}\left(E\right)$;
seen as a section of ${^{0}\Lambda^{1}}\otimes\mathfrak{gl}\left(E\right)$,
this is $O\left(x\right)$, and therefore the restriction $\left(A-D\right)_{|\partial X}$
does not depend on $D$.
\end{rem}

\begin{rem}
\label{rem:residual-0conn}The residue of a $0$-connection $A$ on
$E$ can be interpreted equivalently as a family $p\mapsto A_{p}$
of $0$-connections on the trivial bundles $E_{p}\times X_{p}\to X_{p}$
over the model spaces $X_{p}$ (cf. §\ref{subsec:Definitions-and-main}),
smoothly parametrized by $p\in\partial X$. Concretely, $A_{p}$ is
defined as follows. Given a local frame $s_{1},...,s_{k}$ for $E$
near $p$, we can write in this frame $A=d+a$ where $a$ is an $\mathfrak{gl}\left(k,\mathbb{R}\right)$-valued
locally defined $0$-$1$-form. The evaluation $a_{p}$ can be canonically
interpreted as a globally defined $\mathfrak{gl}\left(k,\mathbb{R}\right)$-valued
$0$-$1$-form on $X_{p}$. The local expression of $A_{p}$ in terms
of the global frame $s_{1|p},...,s_{k|p}$ of $E_{p}\times X_{p}$
is then precisely $A_{p}=d+a_{p}$.
\end{rem}

The next lemma shows that $0$-connections arise naturally in conformally
compact geometry:
\begin{lem}
Let $\left(X,g\right)$ be a conformally compact manifold. Then the
Levi-Civita connection $\nabla^{\circ}$ of $g^{\circ}$ on any tensor
bundle of $X^{\circ}$ extends canonically to a $0$-connection on
the corresponding $0$-tensor bundle.
\end{lem}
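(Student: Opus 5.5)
Since $g$ is a smooth bundle metric on ${}^{0}TX$, the cleanest route is the Koszul formula restricted to $0$-vector fields. It suffices to treat the tangent bundle: on a general tensor bundle the Levi-Civita connection is induced functorially by tensor products and duals. The $0$-tensor bundles are built from ${}^{0}TX$ by the same representations, and the induced operations preserve the $0$-connection axioms. Also, $C^{\infty}(X;{}^{0}TX)=\mathcal{V}_{0}(X)$, so we must show that for $V,W\in\mathcal{V}_{0}(X)$ the interior field $\nabla^{\circ}_{V}W$ extends smoothly to $X$ as an element of $\mathcal{V}_{0}(X)$.

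For $U,V,W\in\mathcal{V}_{0}(X)$ the Koszul formula reads
\[
2g(\nabla^{\circ}_{V}W,U)=Vg(W,U)+Wg(U,V)-Ug(V,W)+g([V,W],U)-g([V,U],W)-g([W,U],V).
\]
Every term on the right extends smoothly to $X$. Indeed, $g(W,U)$ is a smooth function because $g$ is a smooth metric on ${}^{0}TX$, and the derivative of a smooth function along a $0$-vector field is smooth. Moreover $\mathcal{V}_{0}(X)$ is a Lie subalgebra: the commutator of vector fields vanishing at $\partial X$ still vanishes there, which can be checked in coordinates $x\partial_{x},x\partial_{y^{i}}$. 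So the right-hand side defines a smooth function on $X$, and it is $C^{\infty}(X)$-linear in $U$ by the usual Koszul computation, which is purely algebraic.

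Hence it defines a smooth section of ${}^{0}T^{*}X$. Since $g$ is nondegenerate on ${}^{0}TX$ up to the boundary, we can raise the index with $g$ to obtain a unique smooth section $\nabla_{V}W$ of ${}^{0}TX$, i.e. an element of $\mathcal{V}_{0}(X)$, which agrees with $\nabla^{\circ}_{V}W$ on $X^{\circ}$. Linearity over $C^{\infty}(X)$ in $V$ and the Leibniz rule in $W$ follow by density of $X^{\circ}$ and continuity, since they hold in the interior. Canonicity (uniqueness of the extension) also follows from density of the interior.

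The only delicate point is the nondegeneracy step. One must use that $g$, not merely $x^{2}g$, is a smooth nondegenerate metric on the $0$-bundle, so that index-raising is smooth up to $\partial X$. An equivalent local check uses the frame $x\partial_{x},x\partial_{y^{i}}$: with $g_{ij}$ smooth and invertible, the connection coefficients in this frame are smooth.
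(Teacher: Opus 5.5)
Your proof is correct and is essentially the paper's argument. The paper phrases it as the existence and uniqueness of a metric $0$-connection on ${}^{0}TX$ with vanishing $0$-torsion, proved ``exactly as for standard connections'' (i.e.\ via the Koszul formula on $\mathcal{V}_{0}(X)$), which must then coincide with $\nabla^{\circ}$ in the interior; you carry out that Koszul computation explicitly and check smoothness up to $\partial X$, which is the step the paper leaves implicit.
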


\begin{proof}
It suffices to show that the Levi-Civita connection on $TX^{\circ}$
extends to a $0$-connection on $^{0}TX$. If $A$ is a $0$-connection
on $^{0}TX$, then the formula
\[
T^{A}\left(U,V\right):=A_{U}V-A_{V}U-\left[U,V\right],\quad U,V\in\mathcal{V}_{0}\left(X\right)
\]
defines a section $T^{A}$ of ${^{0}\Lambda^{2}}\otimes{^{0}TX}$
which we can call the \emph{$0$-torsion}; calling $A^{\circ}$ the
restriction of $A$ to the interior, the restriction of $T^{A}$ to
$X^{\circ}$ is precisely the torsion of $A^{\circ}$. Now, exactly
as for standard connections, one can prove that there exists a unique
metric $0$-connection $\nabla$ on $^{0}TX$ with vanishing $0$-torsion.
This $0$-connection must then extend the Levi-Civita connection in
the interior.
\end{proof}
\begin{example}
\label{exa:LC-hyperbolic}Let us compute the Levi-Civita $0$-connection
on the $0$-cotangent bundle of the hyperbolic space $\HH^{n+1}$.
It is convenient to work in the non-compact half-space model, with
coordinates $x\geq0,y\in\mathbb{R}^{n}$ and metric
\[
g=\frac{dx^{2}+dy^{2}}{x^{2}}.
\]
Define
\begin{align*}
e^{0} & =-\frac{dx}{x},\quad e^{j}=\frac{dy^{j}}{x}.
\end{align*}
Then $e^{0},...,e^{n}$ is an $\SO\left(n+1\right)$\footnote{We use the ``Stokes'' convention to orient manifolds with boundary.
According to this convention, a basis $v_{1},...,v_{n}$ of $T_{p}\partial X$
is positively oriented if and only if the basis $\vec{n},v_{1},...,v_{n}$
of $T_{p}X$ is positively oriented, where $\vec{n}$ is an \emph{outward-pointing}
tangent vector in $T_{p}X$.} global frame of $^{0}T^{*}\HH^{n+1}$. The Levi-Civita $0$-connection
with respect to this frame takes the form $\nabla=d+\omega$, where
$\omega$ is a skew-symmetric matrix of $0$-$1$-forms. The torsion-free
condition can be expressed in matrix form as
\[
de+\omega\land e=0,
\]
where $e$ is the column vector of $0$-$1$-forms $\left(e^{0},...,e^{n}\right)^{T}$.
A rapid computation shows that
\[
\omega=\left(\begin{matrix}0 & -e^{1} & \cdots & -e^{n}\\
e^{1} & 0\\
\vdots &  & \ddots\\
e^{n} &  &  & 0
\end{matrix}\right).
\]
\end{example}

\begin{rem}
\label{rem:residual-connection-of-LC}Let $g$ be a conformally compact
metric on $X$. As observed in §\ref{subsec:Definitions-and-main},
for every $p\in\partial X$ the model space $X_{p}$ inherits a rescaled
hyperbolic metric $g_{p}$. If $\nabla$ is the Levi-Civita $0$-connection
on $^{0}TX$, then its residual $0$-connection $\nabla_{p}$ on $^{0}TX_{p}$
is the Levi-Civita $0$-connection of the model hyperbolic space $\left(X_{p},g_{p}\right)$.
To see this, it is sufficient to note that if $A$ is a metric $0$-connection
on $^{0}TX$, then the $0$-torsion of $A$ evaluated at a point $p\in\partial X$
can be canonically identified with the $0$-torsion of the residual
$0$-connection $A_{p}$ on $^{0}TX_{p}$.
\end{rem}

Given a $0$-connection $A$ on $E$, the exterior differential $d_{A}$
extends from the interior to an operator acting on $\mathfrak{gl}\left(E\right)$-valued
$0$-differential forms. As in the standard case, we have $d_{A}^{2}=\left[F_{A}\land\cdot\right]$;
however, in our case, the curvature form $F_{A}$ extends from the
interior to a $\mathfrak{gl}\left(E\right)$-valued $0$-$2$-form.

\section{\label{sec:The-Nahm-pole-boundary-condition}The Nahm pole boundary
condition}

In this section, we introduce the \emph{Nahm pole boundary condition
}for $0$-connections on asymptotically hyperbolic 4-manifolds, and
we study its relation with the self-duality equation.

\subsection{\label{subsec:Nahm-poles-and-self-duality}Nahm poles and self-duality}

Let us fix an oriented conformally compact 4-manifold $\left(X^{4},g\right)$,
and an $\SU\left(2\right)$ vector bundle $E\to X$\footnote{We focus on $\SU\left(2\right)$ gauge theory mainly for simplicity,
but everything we say in this paper has a direct counterpart in $\SO\left(3\right)$
gauge theory. The extension to more complicate Lie groups is delicate,
and will not be treated here.}. Since the self-duality equation is conformally invariant, and every
conformally compact metric has an asymptotically hyperbolic representative,
we can assume without loss of generality that $g$ is asymptotically
hyperbolic. The Hodge star operator $*$ on $\left(X^{\circ},g^{\circ}\right)$
extends from the interior to a bundle map $*:{^{0}\Lambda^{k}}\to{^{0}\Lambda^{4-k}}$;
in particular, it induces an automorphism of the bundle $^{0}\Lambda^{2}$
which squares to $1$; its $\pm1$ eigenbundles are denoted by $^{0}\Lambda^{\pm}$.
Concretely, a section of $^{0}\Lambda^{\pm}$ can be written as $x^{-2}\overline{\omega}$,
where $x$ is an auxiliary boundary defining function and $\overline{\omega}$
is a smooth section of the usual bundle $\Lambda^{\pm}$ associated
to the conformal class of $x^{2}g$. If $A$ is a $0$-connection
on $E$, the curvature $F_{A}\in{^{0}\Omega_{\mathfrak{su}\left(E\right)}^{2}\left(X\right)}$
decomposes orthogonally into its self-dual and anti-self-dual parts,
$F_{A}=F_{A}^{+}+F_{A}^{-}$.
\begin{defn}
A $0$-connection $A$ on $E$ is called \emph{self-dual} if $F_{A}^{-}=0$.
\end{defn}

From now on, we will focus on the study of self-dual $0$-connections.
The first step in our analysis is the formulation of an asymptotic
boundary condition, which we call the \emph{Nahm pole boundary condition}:
the name is chosen for its similarity with the boundary condition
introduced by Witten in the study of the Kapustin--Witten equations
\cite{WittenFivebranes}, cf. §\ref{subsec:Comparison-with-KW}.

Let $x$ be an auxiliary boundary defining function $x$. The $0$-$1$-form
$dx/x$ depends on $x$, but its restriction to $\partial X$ does
not: indeed, if $\tilde{x}=e^{\varphi}$ is another boundary defining
function, then
\[
\frac{d\tilde{x}}{\tilde{x}}=\frac{dx}{x}+e^{\varphi}d\varphi
\]
and, seen as a $0$-$1$-form, $e^{\varphi}d\varphi$ \emph{vanishes}
along the boundary. Therefore, the annihilator of $\left(dx/x\right)_{|\partial X}$
is a \emph{canonical} rank 3 smooth subbundle $K\subset{}^{0}TX_{|\partial X}$,
independent of $x$. This bundle inherits a metric from $g$ and an
orientation from $\partial X$, i.e. it has a natural $\SO\left(3\right)$
structure.


\begin{rem}
\label{rem:non-canonical-isomorphism-K-TbX}In general, for any compact
manifold with boundary $X^{n+1}$, the bundle $K$ defined above is
non-canonically isomorphic to $T\partial X$. More precisely, given
a boundary defining function $x$, the map on sections
\begin{align*}
\mathcal{V}\left(X\right) & \to\mathcal{V}_{0}\left(X\right)\\
V & \mapsto xV
\end{align*}
defines an isomorphism of vector bundles $TX\to{^{0}TX}$, which restricts
to an isomorphism $T\partial X\to K$. Note that if $x,x'$ are two
boundary defining functions, and the restriction of (the unique extension
to $X$ of) $x^{-1}x'$ to $\partial X$ is $1$, then $x$ and $x'$
induce the same isomorphism $T\partial X\to K$. In particular, if
$X$ is equipped with a conformally compact metric $g$, and $x$
is a boundary defining function for $X$ inducing the metric $h_{0}\in\mathfrak{c}_{\infty}\left(g\right)$,
then the isomorphism $T\partial X\to K$ induced by $x$ depends only
on $h_{0}$; in this case, it is an isomorphism of $\SO\left(3\right)$
bundles, where $T\partial X$ is equipped with the metric $h_{0}$.
\end{rem}


As every $\SO\left(3\right)$ vector bundle, $K$ can be canonically
identified with the bundle of Lie algebras $\mathfrak{so}\left(K\right)$:
fibrewise, this identification corresponds to the ``cross-product''
identification $\mathbb{R}^{3}\simeq\mathfrak{so}\left(3\right)$
mapping $e_{i}$ to $\mathfrak{a}_{i}$, where
\[
\mathfrak{a}_{1}=\left(\begin{matrix}0 & 0 & 0\\
0 & 0 & -1\\
0 & 1 & 0
\end{matrix}\right),\mathfrak{a}_{2}=\left(\begin{matrix}0 & 0 & 1\\
0 & 0 & 0\\
-1 & 0 & 0
\end{matrix}\right),\mathfrak{a}_{3}=\left(\begin{matrix}0 & -1 & 0\\
1 & 0 & 0\\
0 & 0 & 0
\end{matrix}\right).
\]

\begin{defn}
\label{def:nahm-pole-bc}An $\SU\left(2\right)$ $0$-connection $A$
on $E$ satisfies the \emph{Nahm pole boundary condition} if its residue
satisfies the following two properties:
\begin{enumerate}
\item $\res_{A}$ vanishes on the orthogonal complement $K^{\bot}\subset{^{0}TX_{|\partial X}}$;
\item the restriction of $\res_{A}$ to $K$ defines an isomorphism of Lie
algebra bundles $\res_{A}:K\to\mathfrak{su}\left(E_{|\partial X}\right)$.
\end{enumerate}
\end{defn}

\begin{rem}
\label{rem:adaptedness_at_infinity}The first property of the previous
definition does not require $X$ to be 4-dimensional, nor does it
require $E$ to be an $\SU\left(2\right)$ vector bundle. Moreover,
this condition only depends on the restriction of $g$ to $^{0}TX_{|\partial X}$.
We can express it equivalently by saying that $\res_{A}$ is annihilated
by the $0$-vector field on $X$ $g$-dual of $dx/x$, restricted
to $\partial X$. If a $0$-connection $A$ satisfies this property,
then we say that $A$ is \emph{adapted to $g$ at infinity}. Note
that the computation of Example \ref{exa:LC-hyperbolic} implies that,
if $\left(X,g\right)$ is asymptotically hyperbolic, then the Levi-Civita
$0$-connections on its $0$-tensor bundles are all adapted to $g$
at infinity.
\end{rem}

It is useful to clarify the meaning of the Nahm pole boundary condition
in terms of a local $\SU\left(2\right)$ frame of $E$. Choose a metric
$h_{0}$ in the conformal infinity $\mathfrak{c}_{\infty}\left(g\right)$
of $g$, a geodesic boundary defining function $x$ for $\left(X,g\right)$
inducing $h_{0}$, and the corresponding geodesic normal form
\[
g=\frac{dx^{2}+h\left(x\right)}{x^{2}}
\]
in a geodesic collar $[0,\varepsilon)\times\partial X\hookrightarrow X$.
Let $s_{1},s_{2}$ be an $\SU\left(2\right)$ frame for $E$ defined
in the collar\footnote{It is possible to find such a frame, because $\SU\left(2\right)$
vector bundles over a closed oriented 3-manifold are trivial.}. In this frame, we can write the $0$-connection $A$ as
\[
A=d+\frac{b\left(x\right)}{x}+c\left(x\right)\frac{dx}{x},
\]
where $b\left(x\right)$ is a smooth family of $\mathfrak{su}\left(2\right)$-valued
$1$-forms on $\partial X$ defined on $[0,\varepsilon)$, and $c\left(x\right)$
is a smooth family of $\mathfrak{su}\left(2\right)$-valued functions
on $\partial X$ defined on $[0,\varepsilon)$. The fact that $A$
is adapted to $g$ at infinity is then equivalent to the fact that
$c\left(0\right)$ vanishes identically on $\partial X$. Concerning
the second point of the Nahm pole boundary condition, fix the following
basis of $\mathfrak{su}\left(2\right)$:
\[
\sigma_{1}=\left(\begin{matrix}i\\
 & -i
\end{matrix}\right),\sigma_{2}=\left(\begin{matrix} & 1\\
-1
\end{matrix}\right),\sigma_{3}=\left(\begin{matrix} & i\\
i
\end{matrix}\right).
\]
This basis is chosen so that the following structure equations are
satisfied:
\[
\left[\frac{\sigma_{i}}{2},\frac{\sigma_{j}}{2}\right]=\varepsilon_{ij}{^{k}}\frac{\sigma_{k}}{2}.
\]
Using the Einstein summation convention, we can then write
\[
b\left(x\right)=b^{i}\left(x\right)\frac{\sigma_{i}}{2}
\]
for a triple $b^{1}\left(x\right),b^{2}\left(x\right),b^{3}\left(x\right)$
of families of $1$-forms on $\partial X$ defined on $[0,\varepsilon)$.
The second point of the Nahm pole boundary condition then says exactly
that the triple $b^{1}\left(0\right),b^{2}\left(0\right),b^{3}\left(0\right)$
forms an $\SO\left(3\right)$ coframe of $\left(\partial X,h_{0}\right)$.

The next proposition shows that the Nahm pole boundary condition is
closely related to self-duality:
\begin{prop}
\label{prop:nahm-residues}Let $A$ be an $\SU\left(2\right)$ $0$-connection
on $E$. Suppose that $A$ is adapted to $g$ at infinity, and that
$\res_{A}$ does not vanish identically on any connected component
of $\partial X$. Then $A$ satisfies the Nahm pole boundary condition
if and only if $A$ is \emph{asymptotically self-dual}, i.e. if and
only if $F_{A}^{-}$ vanishes along $\partial X$.
\end{prop}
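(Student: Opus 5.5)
The plan is to compute the residue of $F_{A}^{-}$ directly in a local frame and reduce the statement to a pointwise matrix equation for the residue. Fix $h_{0}\in\mathfrak{c}_{\infty}\left(g\right)$, a geodesic boundary defining function $x$, and an $\SU\left(2\right)$ frame of $E$ on the collar. As in the discussion preceding the proposition, write
\[
A=d+\frac{b\left(x\right)}{x}+c\left(x\right)\frac{dx}{x},
\]
where $c\left(0\right)=0$ because $A$ is adapted to $g$ at infinity.

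Choose a local $h_{0}$-orthonormal oriented coframe $\theta^{1},\theta^{2},\theta^{3}$ of $\partial X$. Then $\varepsilon^{0}=-dx/x$ and $\varepsilon^{j}=\theta^{j}/x$ form a $0$-coframe that is $g$-orthonormal along $\partial X$ and positively oriented, as in Example \ref{exa:LC-hyperbolic}. Write
\[
b\left(0\right)=B^{i}{}_{j}\,\theta^{j}\otimes\frac{\sigma_{i}}{2},
\]
with $B$ a $3\times3$ matrix-valued function on $\partial X$. The Nahm pole boundary condition says exactly that $B\in\SO\left(3\right)$ pointwise.

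\textbf{Step 1: the restriction $F_{A|\partial X}$.} We have $F_{A}=d\left(b/x\right)+d\left(c\,dx/x\right)+\tfrac{1}{2}\left[a\wedge a\right]$, where $a=b/x+c\,dx/x$. Viewed as $0$-$2$-forms, the following terms are $O\left(x\right)$:
\begin{itemize}
\item $d_{\partial X}b/x$ and $dx\wedge\partial_{x}b/x$;
\item $d_{\partial X}c\wedge dx/x$;
\item the cross terms $\left[b/x\wedge c\,dx/x\right]$, since $c=O\left(x\right)$.
\end{itemize}
Only two terms survive at the boundary, namely $-\frac{dx}{x^{2}}\wedge b$ and $\frac{1}{2x^{2}}\left[b\wedge b\right]$. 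In the $0$-coframe this gives
\[
F_{A|\partial X}=\Bigl(B^{i}{}_{l}\,\varepsilon^{0}\wedge\varepsilon^{l}+\left(\operatorname{cof}B\right)^{i}{}_{l}\,\tfrac{1}{2}\epsilon_{ljk}\,\varepsilon^{j}\wedge\varepsilon^{k}\Bigr)\otimes\frac{\sigma_{i}}{2}.
\]
Here the second term uses $\left[\sigma_{a}/2,\sigma_{b}/2\right]=\epsilon_{ab}{}^{c}\sigma_{c}/2$ together with the identity $\epsilon_{ab}{}^{c}B^{a}{}_{j}B^{b}{}_{k}=\epsilon_{jkl}\left(\operatorname{cof}B\right)^{c}{}_{l}$.

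The $2$-forms $\varepsilon^{0}\wedge\varepsilon^{l}\pm\tfrac{1}{2}\epsilon_{ljk}\varepsilon^{j}\wedge\varepsilon^{k}$ span ${^{0}\Lambda^{\pm}}$ at the boundary, with the sign fixed by the Stokes orientation convention. Hence $F_{A|\partial X}^{-}=0$ if and only if $B=\operatorname{cof}B$ pointwise. I will check the sign against the model Nahm pole on $\HH^{4}$, i.e. the spin connection of Example \ref{exa:LC-hyperbolic}, which must be self-dual with $B=\id$. Pinning down this sign carefully is the main bookkeeping obstacle.

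\textbf{Step 2: solving $B=\operatorname{cof}B$.} If $B\in\SO\left(3\right)$, then $\operatorname{cof}B=\det\left(B\right)B^{-T}=B$, which gives the "only if" direction. Conversely, suppose $B=\operatorname{cof}B$ and split by rank.
\begin{itemize}
\item If $\operatorname{rank}B\leq1$, then $\operatorname{cof}B=0$, so $B=0$.
\item If $\operatorname{rank}B=2$, then $\operatorname{cof}B$ has rank $1$, which is impossible.
\item If $B$ is invertible, then $BB^{T}=\det\left(B\right)I$. Taking determinants gives $\left(\det B\right)^{2}=\left(\det B\right)^{3}$, so $\det B=1$ and $BB^{T}=I$, i.e. $B\in\SO\left(3\right)$.
\end{itemize}
Thus at each point $B\in\{0\}\sqcup\SO\left(3\right)$, a disjoint union of two closed sets. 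Since $B$ is continuous, it lies in a single piece on each connected component of $\partial X$. The hypothesis that $\res_{A}$ does not vanish identically on any component excludes $\{0\}$.

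\textbf{Step 3: independence of choices.} Changing the $\SU\left(2\right)$ frame or the $h_{0}$-orthonormal coframe acts on $B$ by left or right multiplication by $\SO\left(3\right)$ matrices. Both conditions are preserved under these actions, so the local computations glue to the global statement. This completes the equivalence.
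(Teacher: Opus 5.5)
Your proposal is correct and follows essentially the paper's route: you isolate the $x^{-2}$ term of $F_{A}$, reduce $F_{A}^{-}|_{\partial X}=0$ to the pointwise equation $\theta=\star_{0}\tfrac{1}{2}\left[\theta\wedge\theta\right]$ (your $B=\mathrm{cof}\,B$), show that its solutions are either $0$ or an $\SO(3)$ coframe, and conclude using continuity and the non-vanishing hypothesis. The differences are cosmetic. You solve the algebraic equation by a rank analysis and the identity $BB^{T}=\det(B)\,I$, whereas the paper uses cross products and the equality of the norms $|\theta_{i}|$. You also fix the orientation sign by testing against the model on $\HH^{4}$ instead of computing $*F_{A}$ directly.
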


\begin{proof}
Let us work in a geodesic collar as above. By the previous discussion,
since $A$ is adapted to $g$ at infinity, we can write $A$ in terms
of an $\SU\left(2\right)$ frame for $E$ defined in the collar as
\[
A=d+x^{-1}\theta+o\left(x^{-1}\right),
\]
where $\theta$ is an $\mathfrak{su}\left(2\right)$-valued $1$-form
on $\partial X$, and the $o\left(x^{-1}\right)$ part is a $o\left(x^{-1}\right)$
$\mathfrak{su}\left(2\right)$-valued $1$-form on $X$. We then have
\begin{align*}
F_{A} & =d\left(x^{-1}\theta\right)+x^{-2}\left(\theta\land\theta\right)+o\left(x^{-2}\right)\\
 & =-x^{-2}\left(dx\land\theta\right)+x^{-2}\left(\theta\land\theta\right)+o\left(x^{-2}\right).
\end{align*}
Calling $*$ the Hodge star on $\left(X,g\right)$, and $\star_{0}$
the Hodge star on $\left(\partial X,h_{0}\right)$, we have
\[
*F_{A}=x^{-2}\star_{0}\theta-x^{-2}dx\land\star_{0}\left(\theta\land\theta\right)+o\left(x^{-2}\right).
\]
Therefore, $F_{A}^{-}$ vanishes along the boundary as a section of
$^{0}\Lambda^{-}$ if and only if
\[
\theta=\star_{0}\left(\theta\land\theta\right).
\]
Write $\theta=\theta^{i}\otimes\frac{\sigma_{i}}{2}$, where the $\theta^{i}$
are $1$-forms on $\partial X$ and the basis $\sigma_{1},\sigma_{2},\sigma_{3}$
of $\mathfrak{su}\left(2\right)$ is defined as above. The equation
above then translates into the system of equations
\begin{align*}
\theta^{1} & =\star_{0}\left(\theta^{2}\land\theta^{3}\right)\\
\theta^{2} & =\star_{0}\left(\theta^{3}\land\theta^{1}\right)\\
\theta^{3} & =\star_{0}\left(\theta^{1}\land\theta^{2}\right).
\end{align*}
Taking duals with respect to $h_{0}$, we obtain the system of equations
for the three vector fields $\theta_{1},\theta_{2},\theta_{3}$
\begin{align*}
\theta_{1} & =\theta_{2}\times\theta_{3}\\
\theta_{2} & =\theta_{3}\times\theta_{1}\\
\theta_{3} & =\theta_{1}\times\theta_{2}
\end{align*}
where $\times$ is the fibrewise cross-product on $T\partial X$ induced
by $h_{0}$ and the orientation. From these equations, we see that
each of the vectors $\theta_{1},\theta_{2},\theta_{3}$ must be orthogonal
to the space spanned by the other two. Therefore, taking pointwise
norms, we obtain the equation
\[
\begin{cases}
\left|\theta_{1}\right| & =\left|\theta_{2}\right|\left|\theta_{3}\right|\\
\left|\theta_{2}\right| & =\left|\theta_{3}\right|\left|\theta_{1}\right|\\
\left|\theta_{3}\right| & =\left|\theta_{1}\right|\left|\theta_{2}\right|
\end{cases}.
\]
On the other hand, since the triple product $\left\langle a\times b,c\right\rangle $
is invariant under cyclic permutations of the triple $\left(a,b,c\right)$,
it follows that $\left|\theta_{1}\right|=\left|\theta_{2}\right|=\left|\theta_{3}\right|$
at each point of $\partial X$. It follows that, at each point of
$\partial X$, we either have $\left|\theta_{1}\right|=\left|\theta_{2}\right|=\left|\theta_{3}\right|=0$
or $\left|\theta_{1}\right|=\left|\theta_{2}\right|=\left|\theta_{3}\right|=1$.
Since $\res_{A}$ is not identically zero on any connected component
of $\partial X$, it follows by continuity that we must have $\left|\theta_{1}\right|=\left|\theta_{2}\right|=\left|\theta_{3}\right|=1$
everywhere. In other words, $\theta_{1},\theta_{2},\theta_{3}$ must
be an $\SO\left(3\right)$ frame of $T\partial X$. This is equivalent
to the second condition in Definition \ref{def:nahm-pole-bc}.
\end{proof}
\begin{rem}
We remark that, if a $0$-connection $A$ satisfies the Nahm pole
boundary condition, then $A$ necessarily has \emph{infinite }energy.
Indeed, the field strength $\left|F_{A}\right|^{2}$ is a $O\left(1\right)$
polyhomogeneous function on $X$, but it does not vanish along $\partial X$
because of the Nahm pole condition. Therefore, $\int_{X}\left|F_{A}\right|^{2}\dVol_{g}$
is infinite because every conformally compact manifold has infinite
volume.
\end{rem}

\subsection{\label{subsec:Nahm-poles-and-spin-structures-on-the-boundary}Nahm
poles and spin structures on the boundary}

By definition, the choice of residue in the Nahm pole boundary condition
amounts to the choice of an isomorphism of Lie algebra bundles $\mathfrak{r}:K\to\mathfrak{su}\left(E_{|\partial X}\right)$.
We call such isomorphisms \emph{Nahm pole residues}. In this subsection
we describe in detail the information encoded by $\mathfrak{r}$.

Recall (cf. §\ref{subsec:Geometry-of-spin-3D}) that a spin structure
on an oriented Riemannian 3-manifold is determined by an $\SU\left(2\right)$
vector bundle $\mathbb{S}$, together with a Clifford multiplication
$\cl:T^{*}\partial X\to\mathfrak{su}\left(\mathbb{S}\right)$. Define
now $\mathbb{S}:=E_{|\partial X}$, and given a choice of $h_{0}\in\mathfrak{c}_{\infty}\left(g\right)$,
define $\mathfrak{r}_{h_{0}}:T\partial X\to\mathfrak{su}\left(\mathbb{S}\right)$
as the pre-composition of $\mathfrak{r}$ by the $\SO\left(3\right)$
isomorphism $\left(T\partial X,h_{0}\right)\to K$ induced by $h_{0}$
(cf. Remark \ref{rem:non-canonical-isomorphism-K-TbX}). We define
\[
\cl_{h_{0}}:=2\left(\mathfrak{r}_{h_{0}}\circ\sharp_{h_{0}}\right),
\]
where $\sharp_{h_{0}}:T^{*}\partial X\to T\partial X$ is the usual
musical isomorphism induced by $h_{0}$. It is easy to check that
$\mathfrak{r}$ is a Nahm pole residue if and only if $\cl_{h_{0}}$
satisfies the axioms of Clifford multiplication. Thus, $\mathfrak{r}$
induces a spin structure $\left(\mathbb{S},\cl_{h_{0}}\right)$ on
$\left(\partial X,h_{0}\right)$ for every $h_{0}\in\mathfrak{c}_{\infty}\left(g\right)$.

Since the self-duality equation is gauge invariant, it is important
to understand how gauge transformations affect the residue of a $0$-connection
and the Nahm pole boundary condition. Let $\Phi$ be a section of
$\SU\left(E\right)$, and let $A$ be a $0$-connection on $E$. Then
the pull-back $\Phi^{*}A$ is $\Phi^{*}A=A+\Phi^{*}d_{A}\Phi$. Writing
$A=d+a$ with respect to an $\SU\left(2\right)$ frame of $E$, and
interpreting $\Phi$ in this frame as an $\SU\left(2\right)$-valued
function, we have
\[
\Phi^{*}A=d+\Phi^{*}d\Phi+\Phi^{*}a\Phi.
\]
Note that the term $\Phi^{*}a\Phi$ is an $\mathfrak{su}\left(2\right)$-valued
$0$-$1$-form, while the term $\Phi^{*}d\Phi$ is an $\mathfrak{su}\left(2\right)$-valued
$1$-form \emph{smooth up to the boundary}. In particular, seen as
a $0$-$1$-form, $\Phi^{*}d\Phi$ \emph{vanishes} along the boundary.
Therefore, the term of $\Phi^{*}d\Phi$ does not affect the residue
of $\Phi^{*}A$, and we have
\[
\res_{\Phi^{*}A}=\Phi_{|\partial X}^{*}\res_{A}\Phi_{|\partial X}.
\]
This computation justifies the following
\begin{defn}
We say that two Nahm pole residues $\mathfrak{r},\mathfrak{r}':K\to\mathfrak{su}\left(E_{|\partial X}\right)$
are \emph{equivalent }if there exists a gauge transformation $\Phi$
of $E$ such that
\[
\mathfrak{\mathfrak{r}}'=\Phi_{|\partial X}^{*}\mathfrak{r}\Phi_{|\partial X}.
\]
\end{defn}

Directly from the definition, we can classify Nahm pole residues up
to equivalence. Fix an auxiliary $\SU\left(2\right)$ frame of $E_{|\partial X}$.
Then, given two Nahm poles $\mathfrak{r}$ and $\mathfrak{\mathfrak{r}}'$,
the map $\rho_{\mathfrak{\mathfrak{r}},\mathfrak{\mathfrak{r}}'}=\mathfrak{\mathfrak{r}}'\circ\mathfrak{\mathfrak{r}}^{-1}:\mathfrak{su}\left(E_{|\partial X}\right)\to\mathfrak{su}\left(E_{|\partial X}\right)$
can be thought of as a map $\rho_{\mathfrak{\mathfrak{r}},\mathfrak{\mathfrak{r}}'}:\partial X\to\SO\left(3\right)$.
$\mathfrak{\mathfrak{r}}$ and $\mathfrak{\mathfrak{r}}'$ are equivalent
if and only if $\rho_{\mathfrak{\mathfrak{r}},\mathfrak{\mathfrak{r}}'}$
\emph{lifts} to a map $\tilde{\rho}_{\mathfrak{\mathfrak{r}},\mathfrak{\mathfrak{r}}'}:\partial X\to\SU\left(2\right)$
which \emph{extends} to a map $X\to\SU\left(2\right)$. Both properties
are governed by topological invariants:
\begin{enumerate}
\item $\rho_{\mathfrak{\mathfrak{r}},\mathfrak{\mathfrak{r}}'}:\partial X\to\SO\left(3\right)$
lifts to $\SU\left(2\right)$ if and only if $w\left(\rho_{\mathfrak{\mathfrak{r}},\mathfrak{\mathfrak{r}}'}\right)=0$,
where $w\left(\rho_{\mathfrak{\mathfrak{r}},\mathfrak{\mathfrak{r}}'}\right)\in H^{1}\left(\partial X;\mathbb{Z}_{2}\right)$
is the pull-back via $\rho_{\mathfrak{\mathfrak{r}},\mathfrak{\mathfrak{r}}'}$
of the generator of $H^{1}\left(\SO\left(3\right);\mathbb{Z}_{2}\right)=\mathbb{Z}_{2}$;
\item $\tilde{\rho}_{\mathfrak{\mathfrak{r}},\mathfrak{\mathfrak{r}}'}:\partial X\to\SU\left(2\right)$
extends to $X$ if and only if $\deg\left(\tilde{\rho}_{\mathfrak{\mathfrak{r}},\mathfrak{\mathfrak{r}}'}\right)=0$.
\end{enumerate}
The two invariants are clearly independent of the choice of the auxiliary
$\SU\left(2\right)$ frame of $E_{|\partial X}$. Moreover, note that
the first condition is equivalent to the fact that, for every $h_{0}\in\mathfrak{c}_{\infty}\left(g\right)$,
the induced spin structures $\left(\mathbb{S},\cl_{h_{0}}\right)$
and $\left(\mathbb{S},\cl_{h_{0}}'\right)$ are equivalent. Finally,
note that \emph{every} isometry of $\SO\left(3\right)$ vector bundles
$\mathfrak{su}\left(E_{|\partial X}\right)\to\mathfrak{su}\left(E_{|\partial X}\right)$
can be realized as $\rho_{\mathfrak{\mathfrak{r}},\mathfrak{\mathfrak{r}}'}$
for some pair of Nahm pole residues, and that the two invariants $w\left(\rho_{\mathfrak{\mathfrak{r}},\mathfrak{\mathfrak{r}}'}\right)$
and $\deg\left(\rho_{\mathfrak{\mathfrak{r}},\mathfrak{\mathfrak{r}}'}\right)$
are related by the equation $\deg\left(\rho_{\mathfrak{\mathfrak{r}},\mathfrak{\mathfrak{r}}'}\right)\equiv w\left(\rho_{\mathfrak{\mathfrak{r}},\mathfrak{\mathfrak{r}}'}\right)^{3}\mod\mathbb{Z}_{2}$.
Therefore, we obtain the following
\begin{thm}
A choice of a reference Nahm pole residue determines a 1-1 correspondence
between equivalence classes of Nahm pole residues and pairs $\left(u,d\right)\in H^{1}\left(\partial X;\mathbb{Z}_{2}\right)\times\mathbb{Z}$
such that $d\equiv u^{3}\mod\mathbb{Z}_{2}$.
\end{thm}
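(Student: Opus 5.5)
Fix a reference residue $\mathfrak{r}_{0}$ and an auxiliary $\SU(2)$ frame of $E_{|\partial X}$. I would define the map $[\mathfrak{r}]\mapsto(u,d)$ as follows.

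The first step is to compute $\rho:=\rho_{\mathfrak{r}_{0},\mathfrak{r}}:\partial X\to\SO(3)$ and set $u=w(\rho)\in H^{1}(\partial X;\mathbb{Z}_{2})$. For $d$ I need an integer attached to an $\SO(3)$-valued map on a closed oriented 3-manifold. I would take $d=\deg(\rho)$, the degree of $\rho$ viewed as a map $\partial X\to\SO(3)\cong\mathbb{RP}^{3}$, with orientations fixed once and for all. When $u=0$, $\rho$ lifts to $\tilde\rho$ and $\deg\rho=2\deg\tilde\rho$, using that $\SU(2)\to\SO(3)$ is a double cover. So the integer really carries the degree of the lift, up to the normalisation that has to be matched with the parity constraint in the statement.

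The second step is well-definedness. Replacing $\mathfrak{r}$ by an equivalent residue $\Phi^{*}\mathfrak{r}\Phi$, with $\Phi$ extending over $X$, multiplies $\rho$ pointwise by $\mathrm{Ad}(\Phi_{|\partial X})$. Here $\Phi_{|\partial X}$ lifts, so its $w$ vanishes, and it extends over $X$, so its degree vanishes. I would then use that $w$ is additive under pointwise products, since $H^{1}(\SO(3);\mathbb{Z}_{2})$ classes are primitive. I would also use that degree is additive under pointwise products for maps into a Lie group; this follows from the H-space structure, because $[\partial X,G]$ is a group and the degree is a homomorphism on the top cohomology class. Together these give invariance of $(u,d)$. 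This additivity also gives injectivity. If $\mathfrak{r},\mathfrak{r}'$ have the same pair, then $\rho_{\mathfrak{r},\mathfrak{r}'}=\rho_{\mathfrak{r}_{0},\mathfrak{r}}^{-1}\rho_{\mathfrak{r}_{0},\mathfrak{r}'}$ has $w=0$ and degree $0$. By the two criteria listed before the theorem, it lifts to $\SU(2)$ and the lift extends over $X$, so $\mathfrak{r}\sim\mathfrak{r}'$. The paper already notes that every bundle isometry arises as some $\rho$, which handles the correspondence between residues and maps.

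The third step is the constraint and surjectivity. The relation $\deg\rho\equiv w(\rho)^{3}$ is stated just before the theorem. I would justify it via the mod 2 reduction of the degree: $\rho^{*}$ of the generator $a\in H^{1}(\mathbb{RP}^{3};\mathbb{Z}_{2})$, with $a^{3}$ the top class, gives $\deg\rho \bmod 2=\langle u^{3},[\partial X]\rangle$. For surjectivity, given $(u,d)$ with $d\equiv u^{3}$, I would first realise $u$ by a map $\partial X\to\SO(3)$. Any class in $H^{1}(\partial X;\mathbb{Z}_{2})$ is $\rho^{*}a$ for some map: map into $\mathbb{RP}^{\infty}$ and compress to the 3-skeleton $\mathbb{RP}^{3}$ by cellular approximation. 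I would then adjust the degree by pointwise multiplying with maps $\partial X\to\SU(2)\to\SO(3)$ of arbitrary degree $k$, obtained by collapsing the complement of a ball. This changes $d$ by $2k$, or by $k$ depending on the normalisation, without changing $u$, and the parity constraint says exactly which integers are reachable.

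I expect the main obstacle to be bookkeeping the normalisation of $d$. One has to decide whether $d$ is $\deg\rho\in\mathbb{Z}$ or the degree of a lift, and then check consistently that the parity relation, the additivity of degree under products in the nonabelian group $\SO(3)$, and the "changes by $2k$" step all match. I would first try to prove the homomorphism property of $\deg$ on $[\partial X,\SO(3)]$ by homotoping one factor to be constant outside a ball and the other to be constant inside it.
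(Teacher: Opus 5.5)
Your proposal is correct and follows the same route as the paper. Both reduce to the map $\rho_{\mathfrak r_0,\mathfrak r}:\partial X\to\SO(3)$ and use $w(\rho)$ and $\deg\rho$, the relation $\deg\rho\equiv w(\rho)^3$, and the fact that every $\rho$ is realised. Your normalisation $d=\deg\rho$, as a map to $\SO(3)\cong\mathbb{RP}^3$, is the right one: for $\partial X=S^3$ it identifies $\pi_3(\SO(3))$ with $2\mathbb{Z}$. You also supply the well-definedness, injectivity and surjectivity arguments that the paper only asserts.

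The one point to repair is your justification that degree is additive under pointwise products. It is not a general H-space fact: it fails for maps into $T^3$. Your ball argument also only works when one factor lifts to $\SU(2)$, because a map that is constant outside a ball kills $H^1$.

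That restricted case is enough, provided you fix the injectivity step. Note first the order: $\rho_{\mathfrak r,\mathfrak r'}=\rho_{\mathfrak r_0,\mathfrak r'}\,\rho_{\mathfrak r_0,\mathfrak r}^{-1}$. Additivity of $w$ gives a lift $\psi$ of $\rho_{\mathfrak r,\mathfrak r'}$, so $\rho_{\mathfrak r_0,\mathfrak r'}=(\mathrm{Ad}\circ\psi)\,\rho_{\mathfrak r_0,\mathfrak r}$, and the ball argument then gives $\deg\psi=0$ directly.

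Alternatively, you can prove full additivity for $\SO(3)$. The generator $\xi$ of $H^3(\SO(3);\mathbb{Z})$ satisfies $\mu^*\xi=\xi\otimes 1+1\otimes\xi$ up to torsion, and torsion vanishes when evaluated on $[\partial X]$.
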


\subsection{\label{subsec:Comparison-with-KW}Comparison with Nahm poles for
the KW equations}

We now compare the boundary condition presented here, with the Nahm
pole boundary condition introduced in \cite{WittenFivebranes} (cf.
also \cite{MazzeoWittenNahmI} for a more mathematically oriented
exposition). The \emph{Kapustin--Witten} equations are gauge-theoretic
equations for a pair $\left(A,\phi\right)$ consisting of an $\SU\left(2\right)$
connection $A$ over an $\SU\left(2\right)$ vector bundle $E$, and
and an $\mathfrak{su}\left(E\right)$-valued $1$-form $\phi$, over
an oriented Riemannian 4-manifold. The equations are
\begin{align*}
F_{A}-\phi\land\phi+*d_{A}\phi & =0\\
d_{A}*\phi & =0.
\end{align*}
In \cite{MazzeoWittenNahmI}, Mazzeo and Witten consider these equations
over a compact, oriented Riemannian 4-manifold with boundary $\left(X,g\right)$.
However, while $A$ is a standard connection, $\phi$ is allowed to
have a pole along the boundary. Specifically, if $x$ is the distance
from the boundary, then as $x\to0$ $\phi$ has an asymptotic expansion
\[
\phi\sim x^{-1}\sum_{i=1}^{3}\theta^{i}\mathfrak{t}_{i}+o\left(x^{-1}\right)
\]
where the $\theta^{i}$ form an $\SO\left(3\right)$ coframe of $\partial X$,
and the $\mathfrak{t}_{i}$ form a Nahm triple of elements in $\mathfrak{su}\left(2\right)$.

These conditions can be rephrased in terms of $0$-connections. Indeed,
writing $\mathcal{A}=A+i\phi$, $\mathcal{A}$ is precisely an $\SL\left(2,\mathbb{C}\right)$
$0$-connection on $E$, and the Nahm pole boundary condition says
that the residue of $\mathcal{A}$, seen as the map $\res_{\mathcal{A}}:TX_{|\partial X}\to\mathfrak{su}\left(E_{|\partial X}\right)\oplus i\mathfrak{su}\left(E_{|\partial X}\right)$
defined as $\res_{\mathcal{A}}=\left(\mathcal{A}-D\right)_{|\partial X}$
for some smooth connection $D$, satisfies the following three properties:
\begin{enumerate}
\item $\Re\left(\res_{\mathcal{A}}\right)$ vanishes;
\item $\Im\left(\res_{\mathcal{A}}\right)$ vanishes along the orthogonal
complement of $T\partial X$ in $TX_{|\partial X}$;
\item the restriction of $\Im\left(\res_{\mathcal{A}}\right)$ to $T\partial X$
defines an isomorphism of Lie algebra bundles $T\partial X\to\mathfrak{su}\left(E_{|\partial X}\right)$.
\end{enumerate}

\subsection{Examples}

We now provide examples of $\SU\left(2\right)$ $0$-connections satisfying
the Nahm pole boundary condition. First, we define the $0$-version
of a spin structure:
\begin{defn}
A \emph{$0$-spin structure} on an oriented conformally compact manifold
$\left(X^{n+1},g\right)$ is a principal $\Spin\left(n+1\right)$
bundle $P$ together with an equivariant bundle map $P\to\Fr_{\SO\left(n+1\right)}\left(^{0}TX,g\right)$
which is fibrewise the universal cover $\Spin\left(n+1\right)\to\SO\left(n+1\right)$.
\end{defn}

On a 4-dimensional conformally compact manifold $\left(X^{4},g\right)$,
a $0$-spin structure can be concretely described in terms of the
following data:
\begin{enumerate}
\item two $\SU\left(2\right)$ vector bundles $\mathbb{S}_{\pm}$;
\item a \emph{$0$-Clifford multiplication} map
\[
^{0}\cl:{^{0}T^{*}X}\to\mathfrak{u}\left(\mathbb{S}_{+}\oplus\mathbb{S}_{-}\right)
\]
which maps $\mathbb{S}_{\pm}$ to $\mathbb{S}_{\mp}$, and satisfies
\[
^{0}\cl\left(\alpha\right)^{2}=-\left|\alpha\right|^{2}
\]
for every $\alpha\in{^{0}T^{*}X}$.
\end{enumerate}
Given this data, the principal bundle $P$ is the bundle of $\Spin\left(4\right)=\SU\left(2\right)\times\SU\left(2\right)$
frames of $\mathbb{S}=\mathbb{S}_{+}\oplus\mathbb{S}_{-}$, and the
map $P\to\Fr_{\SO\left(4\right)}\left(^{0}TX,g\right)$ is the one
which, to a $\Spin\left(4\right)$ frame $s_{1},s_{2},\tilde{s}_{1},\tilde{s}_{2}$
of $\mathbb{S}$, associates the unique $\SO\left(4\right)$ coframe
$e^{0},e^{1},e^{2},e^{3}$ (here we identify $^{0}TX\simeq{^{0}T^{*}X}$
using $g$) for which
\begin{align}
^{0}\cl\left(e^{0}\right) & =\left(\begin{matrix} & \boldsymbol{1}\\
-\boldsymbol{1}
\end{matrix}\right)\nonumber \\
^{0}\cl\left(e^{i}\right) & =\left(\begin{matrix} & \sigma_{i}\\
\sigma_{i}
\end{matrix}\right),\label{eq:clifford-dim4}
\end{align}
where $\sigma_{1},\sigma_{2},\sigma_{3}\in\mathfrak{su}\left(2\right)$
are defined as in §\ref{subsec:Nahm-poles-and-self-duality}.

As for any associated vector bundle, the spinor bundles $\mathbb{S}_{\mp}$
inherit from $g$ a natural $0$-connection, which we call the Levi-Civita
spin $0$-connection. We will now compute this $0$-connection explicitly
on $\HH^{4}$. As in Example \ref{exa:LC-hyperbolic}, we use half-space
coordinates $\left(x,y^{1},y^{2},y^{3}\right)$ with $x\geq0$, and
metric
\[
g_{\hyp}=\frac{dx^{2}+dy^{2}}{x^{2}}.
\]
Define $e^{0}=-dx/x$ and $e^{i}=dy^{i}/x$, so that $\left(e^{0},e^{1},e^{2},e^{3}\right)$
is an $\SO\left(4\right)$ global frame of $^{0}T^{*}\HH^{4}$, in
accordance with the orientation convention used in Example \ref{exa:LC-hyperbolic}.
The essentially unique $0$-spin structure on $\HH^{4}$ is realized
in this model by the two trivial $\SU\left(2\right)$ bundles $\mathbb{S}_{\pm}\equiv\HH^{4}\times\mathbb{C}^{2}$
and the $0$-Clifford multiplication operator defined\emph{ }as in
Formula (\ref{eq:clifford-dim4}). The spin $0$-connections on $\mathbb{S}_{+}$
and $\mathbb{S}_{-}$ are uniquely determined by the fact that they
make $^{0}\cl$ parallel. An easy computation then shows that
\begin{align*}
\nabla^{\pm} & =d\pm e^{i}\otimes\frac{\sigma_{i}}{2}.
\end{align*}
Denote by $e_{0}=-x\partial_{x}$ and $e_{i}=x\partial_{y^{i}}$ the
$g$ duals of the $0$-$1$-forms $e^{0},e^{i}$ defined above; then
$\res_{\nabla^{+}}$ is annihilated by $x\partial_{x}$ and sends
$e_{i|\partial\mathrm{H}^{4}}$ to $\frac{\sigma_{i}}{2}_{|\partial\mathrm{H}^{4}}$;
it follows that $\nabla^{+}$ satisfies the Nahm pole boundary condition.
This is in fact true for the Levi-Civita spin $0$-connection of \emph{every
}asymptotically hyperbolic spin 4-manifold:
\begin{prop}
Let $\left(X^{4},g\right)$ be an oriented asymptotically hyperbolic
4-manifold equipped with a $0$-spin structure. Then the spin $0$-connection
on $\mathbb{S}_{+}$ satisfies the Nahm pole boundary condition.
\end{prop}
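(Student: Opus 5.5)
The argument reduces to the hyperbolic model computed just before the statement, using the residue as a pointwise invariant. First, by Remark \ref{rem:residual-connection-of-LC}, the residual $0$-connection at each $p\in\partial X$ of the Levi-Civita $0$-connection $\nabla$ on $^{0}TX$ is the Levi-Civita $0$-connection of the model $(X_{p},g_{p})$. Since $g$ is asymptotically hyperbolic, $\kappa_{\infty}^{g}(p)=-1$, so $g_{p}$ is exactly the half-space metric $g_{\hyp}$ on $\HH^{4}$.

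Next, pass from $\nabla$ to the spin connection. Choose a local $\SO(4)$ frame $e^{0},\dots,e^{3}$ of $^{0}T^{*}X$ near $p$, smooth up to the boundary, with $e^{0}$ the $g$-unit $0$-$1$-form restricting to $-dx/x$ at $\partial X$. Lift it to a local $\Spin(4)$ frame of $\mathbb{S}_{+}\oplus\mathbb{S}_{-}$ in which $^{0}\cl$ has the constant form (\ref{eq:clifford-dim4}). Write $\nabla=d+\omega$ in this frame, with $\omega$ an $\mathfrak{so}(4)$-valued $0$-$1$-form. Then the spin $0$-connection on $\mathbb{S}_{+}$ is $d+\rho_{+}(\omega)$, where $\rho_{+}:\mathfrak{so}(4)\to\mathfrak{su}(2)$ is the constant projection onto the self-dual factor. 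This is the same algebraic formula as in the smooth case, and it holds because both sides are determined by making $^{0}\cl$ parallel.

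Since $\rho_{+}$ is a constant linear map, the residue of the spin connection at $p$ is $\rho_{+}(\omega_{p})$. By the first step and Remark \ref{rem:residual-0conn}, $\omega_{p}$ is the Levi-Civita connection form of $\HH^{4}$ in the frame $e^{0}_{p},\dots,e^{3}_{p}$. Hence the residual spin $0$-connection $(\nabla^{+})_{p}$ coincides with the model $\nabla^{+}=d+e^{i}\otimes\frac{\sigma_{i}}{2}$ computed above. The model residue is annihilated by $x\partial_{x}$ and sends $e_{i}$ to $\frac{\sigma_{i}}{2}$. This gives both conditions of Definition \ref{def:nahm-pole-bc} at $p$:
\begin{enumerate}
\item $\res$ vanishes on $K^{\bot}$. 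This is spanned by the $g$-dual of $dx/x$, i.e. $e_{0}$ at $p$, and is also Remark \ref{rem:adaptedness_at_infinity}.
\item On $K$, the residue maps the orthonormal frame $e_{1},e_{2},e_{3}$ to $\frac{\sigma_{1}}{2},\frac{\sigma_{2}}{2},\frac{\sigma_{3}}{2}$. These satisfy $[\frac{\sigma_{i}}{2},\frac{\sigma_{j}}{2}]=\varepsilon_{ij}{^{k}}\frac{\sigma_{k}}{2}$, which are exactly the relations of the cross-product identification $K\simeq\mathfrak{so}(K)$ for an oriented orthonormal frame. So the restriction is an isomorphism of Lie algebra bundles.
\end{enumerate}
Because $p$ was arbitrary, the Nahm pole boundary condition holds.

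The main obstacle is making the first two steps rigorous. One must check that the residue commutes with the identification of the spin connection as $\rho_{+}(\omega)$, and that $\omega_{p}$ is computed in a frame matching the one used on $\HH^{4}$, including the orientation convention $e^{0}=-dx/x$.

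To handle the frame, I would work directly in normal half-space coordinates at $p$ with the frame $e^{0}=-dx/x+O(x)$, $e^{i}=dy^{i}/x+O(x)$. I would then verify from the structure equation $de+\omega\wedge e=0$ that $\omega=\omega_{\hyp}+O(x)$, where $\omega_{\hyp}$ is the matrix of Example \ref{exa:LC-hyperbolic}. This is a short computation: the $O(x)$ corrections to $e$, viewed as $0$-forms, contribute only $O(x)$ terms to $de$, so $\omega$ agrees with $\omega_{\hyp}$ at the boundary.
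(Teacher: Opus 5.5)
Your proposal is correct and follows the paper's argument: at each $p\in\partial X$ you identify the residual connection $\nabla^{+}_{p}$ with the Levi-Civita spin $0$-connection of the model $(X_{p},g_{p})\cong\HH^{4}$ and read off both Nahm pole conditions from $d+e^{i}\otimes\frac{\sigma_{i}}{2}$, and your constant projection $\rho_{+}$ and structure-equation check just spell out the step the paper asserts by analogy with tensor bundles. One small correction: a $g$-orthonormal coframe with $e^{i}=dy^{i}/x+O(x)$ on a whole neighborhood would force $h_{0}$ to be flat there, but this does not matter, because $\omega_{|\partial X}$ depends only pointwise on the coframe along $\partial X$ (derivatives of smooth coefficients are $O(x)$ as $0$-forms), so matching the model frame at $p$ alone is enough.
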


\begin{proof}
Let $p\in\partial X$. Then a choice of a $0$-spin structure $\left(\mathbb{S}_{+},\mathbb{S}_{-},{^{0}\cl}\right)$
on $\left(X,g\right)$ induces a natural $0$-spin structure on the
model hyperbolic space $\left(X_{p},g_{p}\right)$. More precisely,
the spinor bundles are just the trivial bundles $X_{p}\times\mathbb{S}_{\pm|p}$,
and the $0$-Clifford multiplication is just $^{0}\cl_{p}$. The Levi-Civita
spin $0$-connection $\nabla^{\pm}$ on $\mathbb{S}_{\pm}$ has a
residual $0$-connection $\nabla_{p}^{\pm}$, which is a spin $0$-connection
on $X_{p}\times\mathbb{S}_{\pm|p}$. Exactly as for the Levi-Civita
connections on tensor bundles (cf. Example \ref{exa:LC-hyperbolic}),
$\nabla_{p}^{\pm}$ is precisely the Levi-Civita spin $0$-connection
relative to the $0$-spin structure $\left(X_{p}\times\mathbb{S}_{\pm|p},{}^{0}\cl_{p}\right)$.
Since $g$ is asymptotically hyperbolic, $\left(X_{p},g_{p}\right)$
is an isometric copy of $\HH^{4}$, so the computation above implies
that $\nabla_{p}^{+}$ satisfies the Nahm pole boundary condition.
Since this is true for every $p\in\partial X$, it follows that $\nabla^{+}$
satisfies the Nahm pole boundary condition, as claimed.
\end{proof}
This proposition is not surprising. Indeed, recall the following well-known
fact of 4-dimensional geometry, explained for example at the beginning
of \cite{AtiyahHitchinSingerSelfDuality}:
\begin{thm}
On a 4-dimensional oriented Riemannian manifold with a spin structure,
the metric is Einstein if and only if the spin connection on $\mathbb{S}_{+}$
(resp. $\mathbb{S}_{-}$) is self-dual (resp. anti-self-dual).
\end{thm}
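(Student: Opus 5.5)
We compute the curvature of the spin connection on $\mathbb{S}_{\pm}$ in terms of the Riemann tensor, using the decomposition of the curvature operator under $\Lambda^{2}=\Lambda^{+}\oplus\Lambda^{-}$. Fix an oriented orthonormal coframe $e^{0},\dots,e^{3}$ locally, with Levi-Civita connection matrix $\omega=(\omega^{a}{}_{b})$ and curvature $\Omega=(\Omega^{a}{}_{b})$, $\Omega=d\omega+\omega\wedge\omega$. The spin connection on $\mathbb{S}=\mathbb{S}_{+}\oplus\mathbb{S}_{-}$ is the lift of $\omega$ through $\mathfrak{so}(4)\simeq\mathfrak{spin}(4)$, namely $\frac14\omega_{ab}\cl(e^{a})\cl(e^{b})$ up to sign conventions. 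Its curvature is the image of $\Omega$ under the same Lie algebra isomorphism, because the lift is a Lie algebra homomorphism.

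Under $\mathfrak{so}(4)=\Lambda^{2}\simeq\Lambda^{+}\oplus\Lambda^{-}\simeq\mathfrak{su}(2)_{+}\oplus\mathfrak{su}(2)_{-}$, the factor $\mathfrak{su}(\mathbb{S}_{+})$ is identified with $\Lambda^{+}$ (or $\Lambda^{-}$, depending on the orientation convention; with the Clifford convention (\ref{eq:clifford-dim4}) the factor $\mathfrak{su}(\mathbb{S}_{+})$ should correspond to $\Lambda^{+}$, matching the hyperbolic computation). The $\mathfrak{su}(\mathbb{S}_{+})$-part of the spin curvature is therefore $\pi_{+}\circ\mathcal{R}$. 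Here we regard the Riemann tensor as a curvature operator $\mathcal{R}:\Lambda^{2}\to\Lambda^{2}$, so that the curvature $2$-form of $\mathbb{S}_{+}$ is $F^{+}_{\nabla}=\pi_{+}\mathcal{R}(\cdot)$ viewed as a $\Lambda^{+}$-valued $2$-form.

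Next we invoke the Singer--Thorpe block decomposition
\[
\mathcal{R}=\left(\begin{matrix}\W^{+}+\frac{\s}{12} & \Ric_{0}\\
\Ric_{0}^{*} & \W^{-}+\frac{\s}{12}
\end{matrix}\right),
\]
where the off-diagonal block $\Lambda^{-}\to\Lambda^{+}$ is identified with the trace-free Ricci tensor via $\Ric_{0}\mapsto$ the map $\alpha\mapsto\pi_{+}(\Ric_{0}\circ\alpha)$ (Kulkarni--Nomizu product with $\Ric_{0}$). The form-part of $F_{\nabla}$ on $\mathbb{S}_{+}$ lies in $\Lambda^{-}$ exactly when the component mapping $\Lambda^{-}$ into $\Lambda^{+}$ vanishes. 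By the pair symmetry of $\mathcal{R}$, the $\Lambda^{-}$-form component of the $\Lambda^{+}$-valued curvature is $\Ric_{0}$. So $F^{-}=0$ on $\mathbb{S}_{+}$ if and only if $\Ric_{0}=0$, i.e. $g$ is Einstein, since in dimension $4$ trace-free Ricci zero means Einstein. The statement for $\mathbb{S}_{-}$ is symmetric.

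The main obstacle is bookkeeping rather than substance. We must verify that the off-diagonal block is exactly (an injective linear image of) $\Ric_{0}$, which is the standard fact that the map $S_{0}^{2}T^{*}\to\Lambda^{+}\otimes\Lambda^{-}$ is an $\SO(4)$-isomorphism, both sides being the $9$-dimensional irreducible representation. We must also track which of $\mathbb{S}_{\pm}$ pairs with $\Lambda^{\pm}$ under the orientation and Clifford conventions in force; as a consistency check, the hyperbolic computation above gives $\nabla^{+}$ self-dual on $\HH^{4}$.
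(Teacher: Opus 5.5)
Your argument is correct, and it is the standard one. The paper gives no proof of this statement: it quotes it from Atiyah--Hitchin--Singer, where it is proved exactly as you do. One writes the curvature operator in blocks on $\Lambda^{+}\oplus\Lambda^{-}$, and the block $\Lambda^{-}\to\Lambda^{+}$ is an injective image of the trace-free Ricci tensor. Your convention check also agrees with the paper's explicit $\HH^{4}$ computation, where $\nabla^{+}=d+e^{i}\otimes\frac{\sigma_{i}}{2}$ has self-dual curvature.

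The one step you leave implicit is passing from $\Ric_{0}=0$ to the Einstein condition with a constant factor. This follows from the contracted Bianchi identity, which forces the scalar curvature to be locally constant.
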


Applying this theorem to our case, we obtain the following
\begin{cor}
\label{cor:LC-0-conn-S+-of-PE-is-SD}Let $\left(X^{4},g\right)$ be
a Poincaré--Einstein metric equipped with a $0$-spin structure $\left(\mathbb{S}_{\pm},{^{0}\cl}\right)$.
Then the spin $0$-connection on $\mathbb{S}_{+}$ is self-dual and
satisfies the Nahm pole boundary condition.
\end{cor}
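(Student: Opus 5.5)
The argument has two independent parts: the Nahm pole boundary condition comes from the preceding Proposition, and self-duality comes from the Atiyah--Hitchin--Singer theorem applied in the interior and then extended to the boundary by continuity.

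\emph{Nahm pole condition.} A Poincaré--Einstein metric satisfies $\Ric(g)+3g=0$. As recalled in §\ref{subsec:Poincar=0000E9=002013Einstein-metrics}, this forces $g$ to be asymptotically hyperbolic. In particular the sectional curvatures tend to $-1$ at infinity, so $\kappa_{\infty}^{g}\equiv-1$. The preceding Proposition then applies directly to the given $0$-spin structure, and the Levi-Civita spin $0$-connection $\nabla^{+}$ on $\mathbb{S}_{+}$ satisfies the Nahm pole boundary condition.

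\emph{Self-duality.} Self-duality of a $0$-connection is the vanishing of $F^{-}_{\nabla^{+}}$ as a section of ${^{0}\Lambda^{-}}\otimes\mathfrak{su}(\mathbb{S}_{+})$.
\begin{enumerate}
\item On the interior $X^{\circ}$, the $0$-spin structure is an ordinary spin structure on $(X^{\circ},g^{\circ})$. The spinor bundles, Clifford multiplication and spin connection agree there with the usual ones, because $^{0}TX$ and $TX$ coincide over $X^{\circ}$.
\item Since $g^{\circ}$ is Einstein, the quoted theorem of Atiyah--Hitchin--Singer gives that the spin connection on $\mathbb{S}_{+}$ is self-dual on $X^{\circ}$. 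This theorem is purely local, so completeness or boundary behaviour plays no role.
\item The curvature of a $0$-connection is a (polyhomogeneous, in particular continuous up to $\partial X$) section of the $0$-bundle. Its anti-self-dual part therefore vanishes on $\partial X$ by continuity, since it vanishes on the dense set $X^{\circ}$.
\end{enumerate}

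The only point that needs care is regularity. The Poincaré--Einstein metric is only assumed polyhomogeneous, so $\nabla^{+}$ is a polyhomogeneous $0$-connection rather than a smooth one. Neither conclusion is affected. The residue argument only uses the leading $O(1)$ behaviour of $g$ as a metric on $^{0}TX$, and that is exactly what the model-space argument in the previous proof uses.
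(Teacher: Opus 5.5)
Your proposal is correct and matches the paper's argument. The Nahm pole condition comes from the preceding Proposition, since Poincaré--Einstein implies asymptotically hyperbolic, and self-duality comes from the Atiyah--Hitchin--Singer theorem applied on the interior. Your extra remarks, extending $F^{-}=0$ to $\partial X$ by continuity and handling polyhomogeneity, are sound details that the paper leaves implicit.
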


\section{\label{sec:Asymptotic-expansion-of-self-dual-0-connections}Asymptotic
expansion of $0$-instantons}

In this section, inspired by the Fefferman--Graham expansion for
Poincaré--Einstein manifolds (cf. §\ref{subsec:Poincar=0000E9=002013Einstein-metrics}),
we compute the asymptotic expansion for self-dual $0$-connections
satisfying the Nahm pole boundary condition. So far we considered
only smooth $0$-connections; however, as in the Poincaré--Einstein
case, it is more appropriate to work with \emph{polyhomogeneous} $0$-connections
(cf. §\ref{subsec:Polyhomogeneity}).
\begin{defn}
\label{def:phg-0conn}Let $X^{n+1}$ be a compact manifold with boundary,
and let $E\to X$ be a vector bundle. A \emph{polyhomogeneous $0$-connection}
is a $0$-connection on $E$ of the form $A=D+a$, where $D$ is a
smooth $0$-connection, and $a\in\mathcal{A}_{\phg}^{\mathcal{E}}\left(X;{^{0}\Lambda^{1}}\otimes\mathfrak{gl}\left(E\right)\right)$
with index set $\mathcal{E}$ of the form $\mathcal{E}=\mathbb{N}\cup\mathcal{I}$
with $\Re\left(\mathcal{I}\right)>0$. We say that $A$ is \emph{log-smooth
of order $k$} if $a$ is log-smooth of order $k$ (cf. Definition
\ref{def:log-smoothness}).
\end{defn}

\begin{rem}
Note that, if $A$ is a polyhomogeneous $0$-connection in the sense
of the previous definition, then the residue $\res_{A}$ is again
a well-defined smooth bundle map $\res_{A}:{^{0}TX_{|\partial X}}\to\mathfrak{gl}\left(E_{|\partial X}\right)$.
Indeed, we can write $A=D+a$ with $D$ smooth $0$-connection and
$a\in\mathcal{A}_{\phg}^{\mathcal{I}}\left(X;{^{0}\Lambda^{1}}\otimes\mathfrak{gl}\left(E\right)\right)$
for some index set $\mathcal{I}$ with $\Re\left(\mathcal{I}\right)>0$;
since $a$ vanishes on the boundary, it does not affect the residue,
which is simply equal to $\res_{D}$.
\end{rem}

As in the Poincaré--Einstein case, working with polyhomogeneous $0$-connections
in our context is both natural and necessary. Indeed, as we shall
see more in detail in the upcoming paper \cite{UsulaNahmPolesII},
the self-duality equation is an example of a nonlinear \emph{$0$-elliptic}
equation (modulo gauge); solutions of equations of this type, with
smooth ``boundary values'' are \emph{expected }to be polyhomogeneous,
but they are rarely smooth. We refer to \cite{ChruscielDelayLeeSkinnerBoundaryRegularity, MazzeoYamabeRegularity}
for two examples of this phenomenon (cf. Remark \ref{rem:regularity-PE})

\subsection{\label{subsec:A-normal-form-for-0connections-adapted-to-g-at-infinity}A
geodesic normal form for $0$-connections}

In §\ref{subsec:Poincar=0000E9=002013Einstein-metrics}, we saw that
a fundamental preliminary ingredient in order to compute the Fefferman--Graham
expansion, is the existence of a normal form for asymptotically hyperbolic
metrics. In this subsection, we establish the existence of a normal
form for $\SU\left(2\right)$ $0$-connections adapted to $g$ at
infinity. First, we need the following technical lemma:
\begin{lem}
Let $Y$ be a closed manifold. Consider the cylinder $\left[0,1\right]\times Y$,
and call $x$ the coordinate on the interval factor. Let $c$ be a
$\mathbb{R}^{N\times N}$-valued polyhomogeneous function on $\left[0,1\right]\times Y$,
smooth across the face $x=1$ and $O\left(x^{\delta}\right)$ polyhomogeneous
across the face $x=0$, for some $\delta>0$. Finally, let $u_{0}\in C^{\infty}\left(Y;\mathbb{R}^{N}\right)$.
Then the system of ODEs for a $\mathbb{R}^{N}$-valued function $u$
on
\[
\begin{cases}
x\partial_{x}u+cu & =0\\
u\left(0,y\right) & =u_{0}\left(y\right)
\end{cases}
\]
admits a unique solution $u$ in $\left[0,1\right]\times Y$, smooth
across the face $x=1$ and $O\left(1\right)$ polyhomogeneous across
the face $x=0$. Moreover, if $c$ is smooth in $\left[0,1\right]\times Y$,
then the solution $u$ is smooth as well.
\end{lem}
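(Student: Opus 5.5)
We will treat the equation as a parametrized linear ODE in the variable $x$, with $y\in Y$ entering only as a smooth parameter, and make it regular by passing to logarithmic time. Setting $t=-\log x$ turns $x\partial_{x}u+cu=0$ into $\partial_{t}u=c\,u$ on $t\in[0,+\infty)$. The hypothesis $c=O(x^{\delta})$ becomes $|c|\lesssim e^{-\delta t}$, which is integrable in $t$. The boundary condition at $x=0$ becomes a prescribed limit $u(t)\to u_{0}$ as $t\to+\infty$. This is a scattering-type problem at infinity with an integrable potential.

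\textbf{Existence, uniqueness and continuity.} We rewrite the problem as the integral equation
\[
u(x,y)=u_{0}(y)-\int_{0}^{x}c(s,y)\,u(s,y)\,\frac{ds}{s}.
\]
Since $\int_{0}^{x}|c(s,y)|\,ds/s\lesssim x^{\delta}/\delta$ is finite, the operator $u\mapsto\int_{0}^{x}c\,u\,ds/s$ is bounded on $C^{0}$ with small norm on $[0,x_{0}]$. A Picard iteration (Volterra/Neumann series) therefore gives a unique continuous solution near $x=0$, and it satisfies $u-u_{0}=O(x^{\delta})$. Uniqueness follows from Grönwall's inequality applied to the difference of two solutions. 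On $[x_{0},1]$ the equation is a regular linear ODE with coefficients that are polyhomogeneous in the interior and smooth at $x=1$, so the solution continues uniquely and is smooth up to $x=1$. Differentiating the integral equation in $y$ gives equations of the same type for $\partial_{y}^{\alpha}u$, so smoothness in $y$ follows by induction.

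\textbf{Polyhomogeneity at $x=0$ (the main step).} We proceed by iterative improvement. Write the expansion of $c$ at $x=0$ with exponents in an index set $\mathcal{F}$ satisfying $\Re\mathcal{F}\geq\delta$. Suppose we already know that $u$ equals a finite polyhomogeneous sum plus $O(x^{\gamma})$. Then $cu$ is polyhomogeneous modulo $O(x^{\gamma+\delta})$. We then integrate $x\partial_{x}u=-cu$ term by term:
\begin{itemize}
\item each term $x^{z}(\log x)^{k}a(y)$ with $\Re z>0$ has an explicit antiderivative of the same form;
\item the constant of integration is fixed by $u(0)=u_{0}$, which is why no $z=0$ term appears on the right-hand side;
\item the remainder satisfies $\int_{0}^{x}O(s^{\gamma+\delta})\,ds/s=O(x^{\gamma+\delta})$.
\end{itemize}
This improves the error exponent by $\delta$ at each step. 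Carrying along the $b$-derivatives $(x\partial_{x})^{j}\partial_{y}^{\alpha}$ through the same integral formula gives conormal estimates for the remainders. Hence $u$ is polyhomogeneous with index set contained in $\mathbb{N}\cup\bigcup_{k\geq1}(\mathcal{F}+\cdots+\mathcal{F})$, with $k$ copies of $\mathcal{F}$ in the $k$-th summand, and it is $O(1)$ with leading term $u_{0}$. The main obstacle is the careful bookkeeping of these conormal remainder estimates. We expect no resonance problem, because the only indicial root of $x\partial_{x}$ is $0$ and the forcing terms all have $\Re z>0$.

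\textbf{The smooth case.} If $c$ is smooth on $[0,1]\times Y$, then $c(0,y)$ need not vanish, so we first check whether the statement is meant with $\delta>0$ still assumed. In that case $c=x\tilde{c}$ with $\tilde{c}$ smooth. The same iteration then produces only integer exponents, and since $\int_{0}^{x}s^{k-1}\,ds=x^{k}/k$, no logarithms ever appear. Each coefficient of the expansion is determined recursively. Borel summation gives a smooth $\tilde{u}$ with the same Taylor series. The difference $u-\tilde{u}$ solves the equation with an $O(x^{\infty})$ source and zero initial data, so it is $O(x^{\infty})$ together with all derivatives, and therefore $u$ is smooth.
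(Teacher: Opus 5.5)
Your proposal is correct and follows essentially the paper's route: both rewrite the problem as the Volterra equation $u=u_0-\int_0^x c\,u\,ds/s$ (the paper's $\dot u=au$ with $a=-x^{-1}c$). Your bootstrap, which gains a factor $x^{\delta}$ at each step, just reads off term by term the paper's Neumann series $u\sim\sum_k\tilde u_k$, where $\tilde u_k=O(x^{k\delta})$. The smooth case is a minor variation. There you use Borel summation plus an $O(x^\infty)$ error estimate, whereas the paper observes directly that each $\tilde u_k$ is smooth and $O(x^k)$. You rightly note that the hypothesis $\delta>0$ must be kept there, so that $c=x\tilde c$.
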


\begin{proof}
Define $a=-x^{-1}c$ and write the equation as $\dot{u}=au$. Define
now the sequence
\begin{align*}
\tilde{u}_{0} & =u_{0}\\
\tilde{u}_{k+1} & =\int_{0}^{x}a\left(s\right)\tilde{u}_{k}\left(s\right)ds.
\end{align*}
Since $a\left(x\right)$ is $O\left(x^{\delta-1}\right)$ polyhomogeneous
and $\delta>0$, and $u_{0}$ is smooth, we inductively have that
$\tilde{u}_{k}$ is $O\left(x^{k\delta}\right)$ polyhomogeneous.
In particular, if $c$ is smooth, then the condition that $c=o\left(1\right)$
implies that $a$ is smooth as well, and therefore $\tilde{u}_{k}$
is smooth and $O\left(x^{k}\right)$. Now, define
\[
u_{k}=\sum_{j=0}^{k}\tilde{u}_{j}.
\]
Then inductively we have
\[
u_{k+1}\left(x\right)=u_{0}+\int_{0}^{x}a\left(s\right)u_{k}\left(s\right)ds.
\]
By a standard fixed point argument, the sequence $u_{k}$ converges
to a limit $u$ in $C^{0}\left(\left[0,1\right]\times Y\right)$.
Similarly, for every fixed $k$, the sequence $u_{h}-u_{k}$ converges
to $u-u_{k}$ in $x^{k\delta}C^{0}\left(\left[0,1\right]\times Y\right)$,
and the same is true for any partial derivative $\left(x\partial_{x}\right)^{j}V_{1}\cdots V_{s}\left(u_{h}-u_{k}\right)$,
where $V_{1},...,V_{s}$ are vector fields in $Y$. It follows that
the limit $u$ is indeed $O\left(1\right)$ polyhomogeneous, with
expansion
\[
u\sim\sum_{k=0}^{\infty}\tilde{u}_{k}.
\]
Observe that if $c$ is smooth, the fact that $c=o\left(1\right)$
implies that actually $c=O\left(x\right)$, and therefore $a=-x^{-1}c$
is smooth as well. Therefore, $\tilde{u}_{k}$ is smooth and $O\left(x^{k}\right)$,
so $u$ is smooth.
\end{proof}
This lemma allows us to interpret geometrically the condition of being
adapted to $g$ at infinity. Fix an asymptotically hyperbolic manifold
$\left(X^{n+1},g\right)$, and let $E\to X$ be a rank $N$ real vector
bundle on $X$. Choose a metric $h_{0}\in\mathfrak{c}_{\infty}\left(g\right)$,
a geodesic boundary defining function $x$ for $X$ inducing $h_{0}$,
and consider the corresponding geodesic normal form
\[
g=\frac{dx^{2}+h\left(x\right)}{x^{2}}
\]
on a geodesic collar $[0,\varepsilon)\times\partial X\hookrightarrow X$.
Moreover, denote by $\partial_{x}$ the $x^{2}g$-gradient of $x$.
\begin{prop}
\label{prop:parallel-transport}Let $A$ be a $0$-connection on $E$
adapted to $g$ at infinity. Then the parallel transport of $A$ along
the flow of $\partial_{x}$ starting at $\partial X$ is well-defined
and polyhomogeneous in a neighborhood of $\partial X$. More precisely,
for every $u_{0}\in C^{\infty}\left(\partial X;E_{|\partial X}\right)$,
there exists a (unique near the boundary) $O\left(1\right)$ polyhomogeneous
section $u$ of $E$ such that $A_{x\partial_{x}}u\equiv0$ near $\partial X$
and $u_{|\partial X}\equiv u_{0}$. Moreover, if $A$ is smooth, then
$u$ is smooth as well.
\end{prop}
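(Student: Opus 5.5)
The plan is to reduce the statement to the preceding ODE lemma by working in the geodesic collar and a local frame. Fix the geodesic collar $[0,\varepsilon)\times\partial X\hookrightarrow X$ induced by $h_{0}$, and trivialize $E$ over it. Since the collar retracts onto $\partial X$, $E$ restricted to the collar is isomorphic to the pull-back of $E_{|\partial X}$. Concretely, pick any smooth connection $D$ on $E$ and use its parallel transport along the integral curves of $\partial_{x}$ to identify $E$ over the collar with $[0,\varepsilon)\times E_{|\partial X}$. To handle $E_{|\partial X}$ globally, I will either cover $\partial X$ by finitely many open sets $U_{\beta}$ over which $E_{|\partial X}$ is trivial, or embed $E_{|\partial X}$ in a trivial bundle $\partial X\times\mathbb{R}^{N'}$. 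The first option is simpler: uniqueness glues the local solutions.

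Over $[0,\varepsilon)\times U_{\beta}$, with a frame $s_{1},\dots,s_{N}$ that is $D$-parallel in the $\partial_{x}$ direction, write $A=D+a$. Here $a$ is a $\mathfrak{gl}(N)$-valued $0$-$1$-form (polyhomogeneous in the sense of Definition \ref{def:phg-0conn}). Then
\[
A_{x\partial_{x}}u=x\partial_{x}u+c\,u,\qquad c:=a\left(x\partial_{x}\right).
\]
The key point is that $c$ vanishes at $x=0$. The vector $x\partial_{x}$ restricted to $\partial X$ is, up to sign, the $g$-dual of $dx/x$, i.e. it spans $K^{\bot}$. By the adaptedness assumption (Remark \ref{rem:adaptedness_at_infinity}), $\res_{A}(x\partial_{x})=0$, and $\res_{A}=a_{|\partial X}$ because $D$ is smooth and so does not contribute to the residue. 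Hence $c$ is an $\mathbb{R}^{N\times N}$-valued function, smooth in the interior, that is $O(x^{\delta})$ polyhomogeneous at $x=0$ for some $\delta>0$:
\begin{enumerate}
\item in the smooth case, $c=O(x)$;
\item in the polyhomogeneous case, $\delta$ is the minimum of $1$ and $\Re(\mathcal{I})$; since index sets have discrete real parts accumulating only at $+\infty$, this minimum exists and is positive.
\end{enumerate}

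Rescaling $x$ so the collar becomes $[0,1]$ (harmless, since $x\partial_{x}$ is scale-invariant), the ODE lemma gives a unique solution $u$ on $[0,1]\times U_{\beta}$ with $u(0,\cdot)=u_{0}$. This $u$ is $O(1)$ polyhomogeneous at $x=0$, and smooth if $A$ is. The lemma is stated for a closed $Y$, but its Picard iteration is pointwise in $y$ with uniform estimates, so it applies on $U_{\beta}$, or on compact subsets of it. By uniqueness (for fixed $y$ this is a linear ODE on $(0,1]$, and both solutions have the same limit at $0$), the local solutions agree on overlaps. They are also independent of the choice of frame: a change of smooth frame $\Phi$ transforms $c$ to $\Phi^{-1}c\Phi+\Phi^{-1}x\partial_{x}\Phi$, which still vanishes at $x=0$ and is intertwined with the section. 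So they patch to a global section $u$ with $A_{x\partial_{x}}u\equiv0$ near $\partial X$ and $u_{|\partial X}=u_{0}$. Finally, the map $u_{0}\mapsto u$ is fibrewise linear and invertible: it is invertible at $x=0$, and for $x>0$ it is ordinary parallel transport of the interior connection. This is the sense in which parallel transport from $\partial X$ is well defined.

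The main obstacle is the uniqueness claim in the polyhomogeneous class, i.e. showing that every $O(1)$ solution with the given boundary value coincides with the Picard one. I would handle it through the difference $w$ of two solutions, which satisfies $x\partial_{x}w=-cw$ with $w\to0$ as $x\to0$. Gronwall in the variable $t=\log x$, with $\int_{0}|c|\,dx/x<\infty$ because $c=O(x^{\delta})$, forces $|w(x)|\le|w(x_{0})|\exp\bigl(\int_{x}^{x_{0}}|c|\,ds/s\bigr)$. Letting $x_{0}\to0$ gives $w\equiv0$.
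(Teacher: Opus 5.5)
Your proposal is correct and follows the paper's proof. Over the geodesic collar you reduce $A_{x\partial_{x}}u=0$ to the ODE $x\partial_{x}u+cu=0$ in a frame, use adaptedness at infinity to get $c(0)=0$ and hence $c=O(x^{\delta})$, and then invoke the preceding ODE lemma, which also gives smoothness when $A$ is smooth. Two of your additions go beyond what the paper writes: the local-frame patching (the paper simply assumes $E$ is trivial), and the Gronwall argument in $\log x$ for uniqueness (the paper leaves uniqueness implicit). One small correction: if there are log terms at the leading exponent, you should take $\delta$ slightly smaller than $\min(1,\Re\mathcal{I})$ rather than equal to it.
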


\begin{proof}
Write the geodesic normal form of $g$ induced by $h_{0}$ as
\[
g=\frac{dx^{2}+h\left(x\right)}{x^{2}}.
\]
For simplicity, assume that $E$ is trivial and choose a smooth frame
for $E$ defined on the collar. In this frame, we can write the $0$-connection
$A$ as
\[
A=d+b+c\frac{dx}{x},
\]
where $b\left(x\right)$ is a $O\left(x^{-1}\right)$ polyhomogeneous
family of $\mathfrak{gl}\left(N\right)$-valued $1$-forms on $\partial X$,
and $c\left(x\right)$ is a $O\left(1\right)$ polyhomogeneous family
of $\mathfrak{gl}\left(N\right)$-valued functions on $\partial X$.
Writing $u$ in terms of this frame as a $\mathbb{R}^{N}$-valued
function on $\left[0,\varepsilon\right]\times\partial X$, the conditions
$A_{x\partial_{x}}u=0$ and $u_{|\partial X}=0$ become the system
of ODEs
\[
\begin{cases}
x\partial_{x}u+cu & =0\\
u\left(0\right) & =u_{0}
\end{cases}.
\]
Since $A$ is adapted to $g$ at infinity, we have $c\left(0\right)\equiv0$;
therefore, by polyhomogeneity, we actually have $c\left(x\right)=O\left(x^{\delta}\right)$
for some $\delta>0$. Therefore, by the previous lemma, this system
admits a unique polyhomogeneous solution on $\left[0,\varepsilon\right]\times\partial X$,
smooth at $x=\varepsilon$ and $O\left(1\right)$ polyhomogeneous
at $x=0$. Moreover, if $A$ is smooth, then $c\left(x\right)$ is
smooth as well, so by the previous lemma the unique solution is smooth.
\end{proof}
We can now define the normal form for $0$-connections adapted to
$g$ at infinity:
\begin{defn}
\label{def:geodesic-normal-family}Let $A$ be a $0$-connection on
$E$ adapted to $g$ at infinity. Let $h_{0}\in\mathfrak{c}_{\infty}\left(g\right)$,
and consider the associated geodesic collar $[0,\varepsilon)\times\partial X\to X$.
The \emph{geodesic normal family of $A$ associated to $h_{0}$} is
the family $\alpha_{h_{0}}\left(x\right)$ of connections on $E_{|\partial X}$
defined for $x\in\left(0,\varepsilon\right)$ as follows: if $t\in\left(0,\varepsilon\right)$,
\[
\alpha_{h_{0}}\left(t\right):=\left(T_{\partial_{x},t}^{A}\right)^{*}\left(A_{|x=t}\right),
\]
where $A_{|x=t}$ is the restriction of $A$ to the bundle $E_{|x=t}$,
and $T_{\partial_{x},t}^{A}:E_{|x=0}\to E_{|x=t}$ is the parallel
transport of $A$ along the integral curves of $\partial_{x}$ from
$x=0$ to $x=t$.
\end{defn}

\begin{rem}
\label{rem:geodesic-normal-family-in-coords}The geodesic normal family
$\alpha_{h_{0}}\left(x\right)$ defined as above is in fact a $O\left(x^{-1}\right)$
polyhomogeneous family of connections on $[0,\varepsilon)$, of the
form
\[
\alpha_{h_{0}}\left(x\right)\sim\mathfrak{r}_{h_{0}}x^{-1}+\alpha_{0}+\sum_{\begin{smallmatrix}\left(\gamma,l\right)\in\mathcal{E}\\
\left(\gamma,l\right)\not=0
\end{smallmatrix}}\alpha_{\gamma,l}x^{\gamma}\left(\log x\right)^{l}.
\]
Here:
\begin{enumerate}
\item $\mathfrak{r}_{h_{0}}:T\partial X\to\mathfrak{gl}\left(E_{|\partial X}\right)$
is the tensor induced by the restriction of $\res_{A}:{^{0}TX}_{|\partial X}\to\mathfrak{gl}\left(E_{|\partial X}\right)$
to $K=\ker\left(\left(dx/x\right)_{|\partial X}\right)$, precomposed
by the isomorphism $T\partial X\to K$ induced by $h_{0}$ (cf. Remark
\ref{rem:non-canonical-isomorphism-K-TbX});
\item $\alpha_{0}$ is a connection on $E_{|\partial X}$;
\item the other coefficients $\alpha_{\gamma,l}$ are $\mathfrak{gl}\left(E_{|\partial X}\right)$-valued
$1$-forms on $\partial X$.
\end{enumerate}
We can compute the expansion explicitly in terms of a local frame
$s_{1},...,s_{N}$ of $E_{|\partial X}$. By Proposition \ref{prop:parallel-transport},
we can extend $s_{1},...,s_{N}$ uniquely to a local frame of $E$
in the open subset of the collar $[0,\varepsilon)\times U$, in such
a way that $A_{x\partial_{x}}s_{i}\equiv0$. In this local frame,
we have
\[
A=d+a\left(x\right)
\]
where $a\left(x\right)$ is a $O\left(x^{-1}\right)$ polyhomogeneous
family of $\mathfrak{su}\left(E_{|\partial X}\right)$-valued $1$-forms
on $\partial X$. The connection $\alpha_{h_{0}}\left(x\right)$ in
the frame $s_{1},...,s_{N}$ is simply
\[
\alpha_{h_{0}}\left(x\right)=d_{\partial X}+a\left(x\right).
\]
Note that, again from Proposition \ref{prop:parallel-transport},
if $A$ is smooth then the expansion of $\alpha_{h_{0}}\left(x\right)$
contains only $x^{k}$ terms with $k\in\mathbb{Z}$, $k\geq-1$.
\end{rem}

Directly from the definition, we see that the geodesic normal family
is a fundamental\emph{ gauge invariant} of $A$, modulo gauge transformations
equal to the identity along the boundary:
\begin{prop}
\label{prop:geodesic-normal-family-is-gauge-invariant}Let $\left(X,g\right)$,
$E\to X$ and $A$ be as in Proposition \ref{prop:parallel-transport}.
Let $\Phi:E\to E$ be a $O\left(1\right)$ polyhomogeneous gauge transformation
of $E$. Assume furthermore that $\Phi_{|\partial X}=1$. Then $A$
and $\Phi^{*}A$ have the same geodesic normal family with respect
to any $h_{0}\in\mathfrak{c}_{\infty}\left(g\right)$.
\end{prop}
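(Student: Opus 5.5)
The plan is to compare the parallel-transport frames of $A$ and $\Phi^{*}A$ and show that they correspond under $\Phi$. Fix $h_{0}$, the geodesic collar, and the field $\partial_{x}$. By Proposition \ref{prop:parallel-transport}, for each $u_{0}\in C^{\infty}\left(\partial X;E_{|\partial X}\right)$ there is a unique $O\left(1\right)$ polyhomogeneous section $u$ with $A_{x\partial_{x}}u=0$ and $u_{|\partial X}=u_{0}$.

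First I would check that $\Phi^{*}A$ is again a polyhomogeneous $0$-connection adapted to $g$ at infinity. The formula $\Phi^{*}A=A+\Phi^{-1}d_{A}\Phi$ shows that the difference from $A$ is $\Phi^{-1}D\Phi+\Phi^{-1}[a,\Phi]$, where $A=D+a$. Since $\Phi_{|\partial X}=1$ and $\Phi$ is polyhomogeneous, $\Phi-1$ vanishes at the boundary. The commutator term therefore vanishes along $\partial X$ as a $0$-$1$-form, and so does $\Phi^{-1}D\Phi$, because it is a polyhomogeneous $1$-form. (If $\Phi-1$ has fractional powers like $x^{\gamma}$ with $\Re\gamma>0$, then $d\Phi$ contains $\gamma x^{\gamma-1}dx=\gamma x^{\gamma}\,dx/x$, which still vanishes as a $0$-form.) Consequently $\res_{\Phi^{*}A}=\res_{A}$, which is consistent with the residue transformation rule computed earlier. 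In particular $\Phi^{*}A$ is adapted at infinity, and its geodesic normal family is defined.

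Next I would use the naturality of parallel transport. If $A_{x\partial_{x}}u=0$, then $(\Phi^{*}A)_{x\partial_{x}}(\Phi^{-1}u)=\Phi^{-1}A_{x\partial_{x}}u=0$. Moreover $\Phi^{-1}u$ is $O\left(1\right)$ polyhomogeneous with boundary value $u_{0}$, since $\Phi_{|\partial X}=1$. By the uniqueness in Proposition \ref{prop:parallel-transport}, this gives
\[
T_{\partial_{x},t}^{\Phi^{*}A}=\Phi_{|x=t}^{-1}\circ T_{\partial_{x},t}^{A}.
\]
Combining this with $(\Phi^{*}A)_{|x=t}=(\Phi_{|x=t})^{*}(A_{|x=t})$, we obtain
\[
\left(T^{\Phi^{*}A}_{\partial_{x},t}\right)^{*}\left((\Phi^{*}A)_{|x=t}\right)=\left(T^{A}_{\partial_{x},t}\right)^{*}\left(\Phi_{|x=t}^{-1}\right)^{*}\Phi_{|x=t}^{*}\left(A_{|x=t}\right)=\alpha_{h_{0}}(t).
\]
Equivalently, in frames: if $s_{i}$ is the $A$-parallel extension of a boundary frame, then $\Phi^{-1}s_{i}$ is the $\Phi^{*}A$-parallel extension, and the connection matrices coincide.

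The main point needing care is the uniqueness step. I must make sure that $\Phi^{-1}u$ lies in the class, $O\left(1\right)$ polyhomogeneous, in which Proposition \ref{prop:parallel-transport} guarantees uniqueness. This holds because products of polyhomogeneous functions are polyhomogeneous, with index set contained in the sum of the index sets, and $\Phi^{-1}$ is polyhomogeneous. The remaining steps are formal.
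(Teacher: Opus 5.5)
Your proof is correct and uses the same key identity as the paper, $T^{A}_{\partial_{x},t}=\Phi_{|x=t}\circ T^{\Phi^{*}A}_{\partial_{x},t}$, which follows from naturality of parallel transport together with $\Phi_{|\partial X}=1$. The paper then compares the holonomies of the two normal families along curves in $\partial X$, whereas you compose the pullbacks of connections directly; this is an equivalent packaging, and your checks that $\Phi^{*}A$ is still adapted at infinity and that $\Phi^{-1}u$ lies in the uniqueness class supply details the paper leaves implicit.
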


\begin{proof}
Fix $h_{0}\in\mathfrak{c}_{\infty}\left(g\right)$, and work on the
associated geodesic collar $[0,\varepsilon)\times\partial X\to X$
of $\left(X,g\right)$. To avoid notational problems, call $B=\Phi^{*}A$
and denote by $\alpha\left(x\right)$ and $\beta\left(x\right)$ the
geodesic normal families of $A$ and $B$ associated to $h_{0}$.
We need to prove that $\alpha=\beta$. Let $\gamma:\left[0,1\right]\to\partial X$
be a smooth curve. Given $t\in\left(0,\varepsilon\right)$, denote
by $T_{\gamma}^{\alpha\left(t\right)},T_{\gamma}^{\beta\left(t\right)}:E_{\gamma\left(0\right)}\to E_{\gamma\left(1\right)}$
the parallel transports of $\alpha\left(t\right)$ and $\beta\left(t\right)$
along $\gamma$. Since a connection is determined by its parallel
transports, it suffices (by the generality of $t$ and $\gamma$)
to prove that $T_{\gamma}^{\alpha\left(t\right)}=T_{\gamma}^{\beta\left(t\right)}$.
By definition of $\alpha$ and $\beta$, we have
\begin{align*}
T_{\gamma}^{\alpha\left(t\right)} & =\left(T_{\partial_{x},t}^{A}\right)^{-1}T_{\left\{ t\right\} \times\gamma}^{A}T_{\partial_{x},t}^{A}\\
T_{\gamma}^{\beta\left(t\right)} & =\left(T_{\partial_{x},t}^{B}\right)^{-1}T_{\left\{ t\right\} \times\gamma}^{B}T_{\partial_{x},t}^{B}.
\end{align*}
On the other hand, since $B=\Phi^{*}A$ and $\Phi_{|\partial X}=1$,
we have $T_{\partial_{x},t}^{A}=\Phi_{|x=t}T_{\partial_{x},t}^{B}$
and $T_{\left\{ t\right\} \times\gamma}^{A}\Phi_{\gamma\left(0\right)}=\Phi_{\gamma\left(1\right)}T_{\left\{ t\right\} \times\gamma}^{B}$.
Substituting in the previous equations and simplifying, we obtain
$T_{\gamma}^{\beta\left(t\right)}=T_{\gamma}^{\alpha\left(t\right)}$
as claimed.
\end{proof}

\subsection{\label{subsec:Asymptotic-expansions-of-self-dual-0-connections}Asymptotic
expansions of $0$-instantons}

We now come back to the four-dimensional picture. Consider an asymptotically
hyperbolic 4-manifold $\left(X^{4},g\right)$, an $\SU\left(2\right)$
vector bundle $E\to X$, and a polyhomogeneous $0$-connection $A$
on $E$ satisfying the Nahm pole boundary condition, with Nahm pole
residue $\mathfrak{r}$. Choose $h_{0}\in\mathfrak{c}_{\infty}\left(g\right)$
and consider the geodesic normal form for $g$ in a geodesic collar
$[0,\varepsilon)\times\partial X\hookrightarrow X$,
\begin{equation}
g=\frac{dx^{2}+h\left(x\right)}{x^{2}}.\label{eq: geodesic normal form}
\end{equation}
The geodesic normal family $\alpha_{h_{0}}\left(x\right)$ of $A$
associated to $h_{0}$ is then a family of $\SU\left(2\right)$ connections
on $\mathbb{S}=E_{|\partial X}$, with expansion of the form
\[
\alpha_{h_{0}}\left(x\right)\sim\mathfrak{r}_{h_{0}}x^{-1}+\alpha_{0}+\sum_{\begin{smallmatrix}\left(\gamma,l\right)\in\mathcal{E}\\
\left(\gamma,l\right)\not=0
\end{smallmatrix}}\alpha_{\gamma,l}x^{\gamma}\left(\log x\right)^{l}.
\]
In this subsection, we show that if $A$ satisfies the self-duality
equation, then the connection $\alpha_{0}$ and the $\mathfrak{su}\left(\mathbb{S}\right)$-valued
$1$-forms $\alpha_{\gamma,l}$ satisfy various differential relations,
and the index set $\mathcal{E}$ is greatly restricted, exactly as
in the Poincaré--Einstein case.

We rename $\theta:=\mathfrak{r}_{h_{0}}$ for ease of notation. The
$\mathfrak{su}\left(\mathbb{S}\right)$-valued $1$-form $\theta$
is canonically associated to the spin 3-manifold $\left(\partial X,h_{0},\mathbb{S},\cl_{h_{0}}\right)$:
it is the \emph{soldering form} of the spin 3-manifold (cf. §\ref{subsec:Geometry-of-spin-3D}).
Concretely, given an $\SU\left(2\right)$ frame $s_{1},s_{2}$ of
$\mathbb{S}$, there exists a unique $\SO\left(3\right)$ co-frame
$\theta^{1},\theta^{2},\theta^{3}$ of $T^{*}\partial X$ such that
$\cl_{h_{0}}\left(\theta^{i}\right)=\sigma_{i}$ with $\sigma_{1},\sigma_{2},\sigma_{3}\in\mathfrak{su}\left(2\right)$
defined as in §\ref{subsec:A-normal-form-for-0connections-adapted-to-g-at-infinity};
in this frame, the soldering form is
\[
\theta=\theta^{i}\otimes\frac{\sigma_{i}}{2}.
\]
As explained in §\ref{subsec:Geometry-of-spin-3D}, the soldering
form determines a canonical isomorphism
\begin{align*}
T^{*}\partial X\otimes\mathfrak{su}\left(\mathbb{S}\right) & \simeq\mathfrak{gl}\left(T\partial X\right).\\
\gamma & \mapsto\theta^{-1}\circ\gamma.
\end{align*}
Under this identification we can talk about the adjoint $\gamma^{*}$,
the skew-adjoint and self-adjoint parts $\sk\gamma$ and $\symm\gamma$,
and the trace $\tr\gamma$. We also denote by $\symmtf\gamma$ the
self-adjoint trace-free part of $\gamma$. We denote by $\star=\star\left(x\right)$
the $x$-dependent Hodge star associated to $h\left(x\right)$, seen
as a family of bundle maps from $\Lambda^{1}$ to $\Lambda^{2}$ of
$\partial X$ (not to be confused with $*$, the 4-dimensional Hodge
star on $\left(X,g\right)$). We can expand $\star$ in power series
as $x\to0$:
\[
\star\left(x\right)\sim\sum_{k=0}^{\infty}\star_{k}x^{k},
\]
where $\star_{0}$ is the Hodge star of $h_{0}$ and the coefficients
$\star_{k}$ are bundle maps $\Lambda^{s}\to\Lambda^{3-s}$ on $\partial X$.
\begin{lem}
In the geodesic collar induced by $h_{0}$, the equation $F_{A}=*F_{A}$
is equivalent to the evolution equation $\partial_{x}\alpha_{h_{0}}=-\star f_{\alpha_{h_{0}}}$
for $\alpha_{h_{0}}\left(x\right)$, where $f_{\alpha_{h_{0}}}$ is
the curvature of $\alpha_{h_{0}}$.
\end{lem}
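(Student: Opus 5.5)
We work in the radial (temporal) gauge furnished by Proposition \ref{prop:parallel-transport}. In the geodesic collar, extend a local $\SU(2)$ frame of $\mathbb{S}=E_{|\partial X}$ by $A$-parallel transport along $\partial_{x}$. In this frame $A_{\partial_{x}}=\partial_{x}$, so $A=d+a(x)$, where $a(x)$ has no $dx$ component. By Remark \ref{rem:geodesic-normal-family-in-coords}, $a(x)$ is exactly the connection form of $\alpha_{h_{0}}(x)$, i.e. $\alpha_{h_{0}}(x)=d_{\partial X}+a(x)$. Since both sides of the claimed equivalence are gauge covariant, and the frame is only changed by $x$-independent gauge transformations on overlaps, it suffices to verify the equivalence locally in such a frame.

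The first step is to compute the curvature. Splitting the exterior derivative on $(0,\varepsilon)\times\partial X$ as $d=dx\wedge\partial_{x}+d_{\partial X}$, we get
\[
F_{A}=dx\wedge\partial_{x}a+f_{\alpha_{h_{0}}(x)},
\]
where $f_{\alpha_{h_{0}}}=d_{\partial X}a+a\wedge a$ is the curvature of the boundary connection at fixed $x$. This holds on the interior, which is where the equation $F_{A}=*F_{A}$ is imposed; polyhomogeneity only enters in interpreting both sides as $0$-forms, and the $0$-Hodge star extends continuously.

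The second step is to compute the Hodge star of $g=x^{-2}(dx^{2}+h(x))$ on $2$-forms. Since the $4$-dimensional Hodge star on $2$-forms is conformally invariant, we may replace $g$ by $\overline{g}=dx^{2}+h(x)$. For a product-type metric of this form, with orientation $dx\wedge\dVol_{h}$ or its opposite, and with $\beta$ a $1$-form and $\omega$ a $2$-form on the slices, we have
\[
*(dx\wedge\beta)=\pm\star\beta,\qquad *\omega=\pm\,dx\wedge\star\omega .
\]
Here $\star=\star(x)$ is the Hodge star of $h(x)$. Matching the $dx\wedge(\cdot)$ components and the purely tangential components of $F_{A}=*F_{A}$ then gives
\[
\partial_{x}a=\pm\star f_{\alpha_{h_{0}}},\qquad f_{\alpha_{h_{0}}}=\pm\star\partial_{x}a .
\]
Because $\star^{2}=1$ on a Riemannian $3$-manifold, these two equations are equivalent to each other. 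So self-duality reduces to the single evolution equation $\partial_{x}\alpha_{h_{0}}=\pm\star f_{\alpha_{h_{0}}}$, and conversely this equation implies $F_{A}^{-}=0$.

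The only delicate point, and the expected main obstacle, is fixing the sign. This depends on the Stokes orientation convention: $\partial X$ is oriented so that the outward normal comes first, and the outward normal here is $-\partial_{x}$. The orientation of $X$ is therefore $-dx\wedge\dVol_{h}$, which matches $e^{0}=-dx/x$ in Example \ref{exa:LC-hyperbolic}. With this orientation, $*(dx\wedge\beta)=-\star\beta$, giving $\partial_{x}\alpha=-\star f_{\alpha}$.

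As a consistency check, the hyperbolic model should satisfy this equation. There $\alpha(x)=x^{-1}\theta$, so $\partial_{x}\alpha=-x^{-2}\theta$, while
\[
f_{\alpha}=x^{-1}d\theta+x^{-2}\theta\wedge\theta=x^{-2}\theta\wedge\theta .
\]
The identity $\theta=\star_{0}(\theta\wedge\theta)$ from the proof of Proposition \ref{prop:nahm-residues} gives $-\star f_{\alpha}=-x^{-2}\theta$, which agrees with $\partial_{x}\alpha$, confirming the sign.
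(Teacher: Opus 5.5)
Your proposal is correct and follows essentially the same route as the paper: work in the frame that is parallel along $\partial_x$, so that $A=d+a(x)$ has no $dx$ component, write $F_A=dx\wedge\partial_x a+f_{\alpha_{h_0}}$, and use $*F_A=-dx\wedge\star f_{\alpha_{h_0}}-\star\partial_x a$ for the normal-form metric. Your explicit orientation argument (consistent with the appendix convention that $-\partial_x,e_1,e_2,e_3$ is positively oriented) and the hyperbolic-model sign check simply spell out what the paper calls an elementary computation.
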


\begin{proof}
Write $A=d+a$ with respect to an $\SU\left(2\right)$ frame $s_{1},s_{2}$
of $E$ such that $A_{x\partial_{x}}s_{i}\equiv0$. The curvature
of $A$ is then
\begin{align*}
F_{A} & =da+a\land a\\
 & =d_{\partial X}a+a\land a+dx\land\partial_{x}a
\end{align*}
Since $a=a\left(x\right)$ is a family of $\mathfrak{su}\left(2\right)$-valued
$1$-forms on $\partial X$, and because $g$ is in normal form in
the given collar, an elementary computation yields
\[
*F_{A}=-dx\land\star\left(d_{\partial X}a+a\land a\right)-\star\partial_{x}a.
\]
Therefore, the self-duality equation is equivalent to $\partial_{x}a=-\star\left(d_{\partial X}a+a\land a\right)$,
which invariantly reads $\partial_{x}\alpha_{h_{0}}=-\star f_{\alpha_{h_{0}}}$
as claimed.
\end{proof}
\begin{thm}
\label{thm:index-set-0-instantons}Let $A$ be a self-dual polyhomogeneous
$0$-connection satisfying the Nahm pole boundary condition. Given
a choice of $h_{0}\in\mathfrak{c}_{\infty}\left(g\right)$, let $x$
be a geodesic boundary defining function inducing $h_{0}$, and let
$\alpha_{h_{0}}\left(x\right)$ be the geodesic normal family of $A$
associated to $h_{0}$. Then the expansion of $\alpha\left(x\right)$
as $x\to0$ takes the form
\begin{equation}
\alpha_{h_{0}}\left(x\right)\sim\theta x^{-1}+\alpha_{0}+\sum_{k=1}^{\infty}\sum_{l=0}^{k}\left(x^{2k-1}\left(\log x\right)^{l}\alpha_{2k-1,l}+x^{2k}\left(\log x\right)^{l}\alpha_{2k,l}\right).\label{eq:expansion}
\end{equation}
In other words, the expansion of $\alpha\left(x\right)$ can contain
only nonzero terms of the form $x^{k}\left(\log x\right)^{l}$, with
$k\in\mathbb{Z},k\geq-1$ and $l\leq\left(k+1\right)/2$. Moreover,
the expansion is smooth (i.e. it does not contain log terms) if and
only if $\alpha_{1,1}=0$.
\end{thm}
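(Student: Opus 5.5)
The plan is to substitute the polyhomogeneous expansion of Remark \ref{rem:geodesic-normal-family-in-coords} into the evolution equation of the previous lemma, $\partial_{x}\alpha_{h_{0}}=-\star f_{\alpha_{h_{0}}}$. I will then read the equation off order by order, in the manner of Fefferman--Graham.

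First I split off the singular part. Write $\alpha_{h_{0}}(x)=\theta x^{-1}+\alpha_{0}+\gamma(x)$ with $\gamma=o(1)$ polyhomogeneous, and expand the curvature:
\[
f_{\alpha}=x^{-2}\theta\wedge\theta+x^{-1}d_{\alpha_{0}}\theta+f_{\alpha_{0}}+x^{-1}[\theta\wedge\gamma]+d_{\alpha_{0}}\gamma+\gamma\wedge\gamma .
\]
Next I multiply the equation by $x$ and expand $\star(x)\sim\sum\star_{k}x^{k}$.

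At order $x^{-1}$ the equation reads $\theta=\star_{0}(\theta\wedge\theta)$, which is exactly the Nahm pole identity of Proposition \ref{prop:nahm-residues}. At order $x^{0}$ we get a linear algebraic equation of the form $d_{\alpha_{0}}\theta=(\text{terms in }\star_{1},\theta)$. This is a torsion-type equation, and I expect it to determine $\alpha_{0}$ uniquely from $h_{0}$ and $\partial_{x}h|_{0}$, as a Levi-Civita spin connection twisted by $h_{1}$. Everything that remains is an equation for $\gamma$ of the form
\[
x\partial_{x}\gamma+L\gamma=R(x,\gamma),\qquad L\gamma:=\star_{0}[\theta\wedge\gamma],
\]
where $R$ collects the terms $\star_{k}$ with $k\geq1$, $d_{\alpha_{0}}\gamma$, and the quadratic term, each carrying at least one extra power of $x$ relative to the linear part.

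The key algebraic step is to diagonalise $L$ using the identification $T^{*}\partial X\otimes\mathfrak{su}(\mathbb{S})\simeq\mathfrak{gl}(T\partial X)$. Since $L$ is $\SO(3)$-equivariant, it acts by a scalar on each of the three irreducible pieces: trace, skew, and $\symmtf$. A direct computation in the frame $\theta^{i}\otimes\sigma_{i}/2$, using the cross-product formula from the proof of Proposition \ref{prop:nahm-residues}, should give a scalar $-1$ on $\symmtf$ and other integer scalars (I expect $2$ and $1$) on the trace and skew parts. The indicial roots $s$, for which $(s+L)$ fails to be invertible, are then integers, and the only one with $\Re s>0$ is $s=1$, located on the $\symmtf$ part.

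With this in hand, the index set is restricted as follows. Suppose the expansion of $\gamma$ contains an exponent that is not an integer. Take the one with the smallest real part, and within it the highest log power. Its coefficient $c$ must satisfy $(s+L)c=0$, because $R$ only produces exponents already present shifted by positive integers, or sums of earlier exponents. Hence $s$ is an indicial root, which is an integer; this is a contradiction.

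Next I treat the integer exponents by induction on $k$. At order $x^{k}(\log x)^{l}$ the equation reads
\[
(k+L)\alpha_{k,l}+(l+1)\alpha_{k,l+1}=R_{k,l},
\]
where $R_{k,l}$ is determined by lower-order coefficients. For $k\geq2$ the operator $k+L$ is invertible, so every coefficient is determined. The log power is then bounded by the maximal log power in $R_{k,l}$, which comes from products and $\star_{j}$ shifts of earlier terms.

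At $k=1$ the operator $1+L$ kills $\symmtf$. The top log equation then forces the $\symmtf$ component of $R_{1,0}$ to equal $\alpha_{1,1}$, so $\alpha_{1,1}$ is determined and is the only place where logs can be created. Moreover $\symmtf\alpha_{1}$ is left free, and $\alpha_{1,l}=0$ for $l\geq2$.

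A counting argument then gives the bound $l\leq(k+1)/2$. Logs are born at order $x^{1}$, which is two steps above the leading order $x^{-1}$. Every new factor of $\log x$ in $R$ arises from multiplying two log-carrying terms, or from a $\log$-carrying term times $\theta x^{-1}$, and such a product gains at least two powers of $x$ per additional log. The induction hypothesis $l\leq(k+1)/2$ is preserved by this.

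Finally, if $\alpha_{1,1}=0$, then no $R_{k,l}$ with $l>0$ ever appears, and the invertibility of $k+L$ for $k\geq2$ kills all logs. Conversely, a smooth expansion obviously has $\alpha_{1,1}=0$.

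I expect the main obstacle to be the precise computation of the spectrum of $L$, including its signs and the orientation conventions. Close behind is the bookkeeping needed to justify the sharp log bound $l\leq(k+1)/2$, in particular verifying that the quadratic term $\gamma\wedge\gamma$ cannot raise the log power faster than claimed.
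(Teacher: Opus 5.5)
Your proposal is correct and is essentially the paper's proof. It uses the same indicial operator $\gamma\mapsto(\tr\gamma)1-\gamma^{*}$ from Lemma \ref{lem:useful}, with eigenvalues $2,1,-1$ on the trace, skew and $\symmtf$ parts, and excludes non-integer exponents through a minimal offending exponent. It then uses the same resonance at exponent $1$, where comparing the $(\log x)^{l}$ and $(\log x)^{l-1}$ coefficients forces $\alpha_{1,l}$ to be $\symmtf$ and then zero for $l\geq2$. The paper packages this as a minimal element of a set of ``bad'' pairs. Your explicit log-counting for products is exactly what justifies that minimality step for the quadratic term, and the paper leaves that step implicit.

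One small point: you take $\alpha_{h_0}-\theta x^{-1}-\alpha_{0}$ to be $o(1)$, but a priori this is not known. The paper only has $\Re(\mathcal{E})>-1$, since $a$ is a $0$-$1$-form with index set $\mathbb{N}\cup\mathcal{I}$. Your minimal-exponent argument covers this case unchanged, because $\Re(1+s_{1}+s_{2})>\max(\Re s_{1},\Re s_{2})$ whenever $\Re s_{1},\Re s_{2}>-1$.
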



\begin{proof}
Write the expansion of $\alpha$ as
\[
\alpha\left(x\right)\sim\theta x^{-1}+\alpha_{0}+\sum_{\begin{smallmatrix}\left(\gamma,l\right)\in\mathcal{E}\\
\left(\gamma,l\right)\not=0
\end{smallmatrix}}\alpha_{\gamma,l}x^{\gamma}\left(\log x\right)^{l}.
\]
Here $\alpha_{0}$ is an $\SU\left(2\right)$ connection on $\mathbb{S}$,
the coefficients $\alpha_{\gamma,l}$ with $\left(\gamma,l\right)\not=0$
are sections of $T^{*}\partial X\otimes\mathfrak{su}\left(\mathbb{S}\right)$,
and the index set $\mathcal{E}$ satisfies $\Re\left(\mathcal{E}\right)>-1$.
Let us put a partial order on $\mathcal{E}$: we say that $\left(\gamma_{1},l_{1}\right)<\left(\gamma_{2},l_{2}\right)$
if either $\Re\left(\gamma_{1}\right)<\Re\left(\gamma_{2}\right)$
or $\gamma_{1}=\gamma_{2}$ and $l_{1}>l_{2}$. Let $\mathcal{S}$
be the set of pairs $\left(\gamma,l\right)\in\mathcal{E}$ for which
$\alpha_{\gamma,l}\not=0$ and either $\gamma\not\in\mathbb{N}$ or
$\gamma\in\mathbb{N}$ and $l>\left(\gamma+1\right)/2$. Our aim is
to show that $\mathcal{S}$ must be empty. Suppose for the sake of
contradiction that $\mathcal{S}\not=\emptyset$: then, since clearly
$\mathcal{S}$ cannot have an infinite descending chain, $\mathcal{S}$
must have a minimal element $\left(\gamma,l\right)$. Consider again
the equation $\partial_{x}\alpha=-\star f_{\alpha}$, and let's focus
on the coefficient of $x^{\gamma-1}\left(\log x\right)^{l}$ on the
left and on the right. Since $\left(\gamma,l\right)$ is a minimal
element and $\left(\gamma,l+1\right)<\left(\gamma,l\right)$, the
coefficient $\alpha_{\gamma,l+1}$ must vanish; therefore, the coefficient
of $x^{\gamma-1}\left(\log x\right)^{l}$ on the left can only be
obtained by differentiating the term $\alpha_{\gamma,l}x^{\gamma}\left(\log x\right)^{l}$,
and it must be $\gamma\alpha_{\gamma,l}$. On the other hand, write
$\alpha=\alpha_{0}+\tilde{\alpha}$, and write
\[
\star f_{\alpha}=\star f_{\alpha_{0}}+\star d_{\alpha_{0}}\tilde{\alpha}+\star\frac{1}{2}\left[\tilde{\alpha}\land\tilde{\alpha}\right].
\]
The term $\star f_{\alpha_{0}}$ is smooth, so it cannot contribute
to the $x^{\gamma-1}\left(\log x\right)^{l}$ coefficient because
$l>0$. The only terms of the expansion of $\star d_{\alpha_{0}}\tilde{\alpha}$
which could contribute to the $x^{\gamma-1}\left(\log x\right)^{l}$
term are $\star_{i}d_{\alpha_{0}}\alpha_{\gamma-i-1,l}$, for $i\geq0$;
however, $\left(\gamma-i-1,l\right)<\left(\gamma,l\right)$, so again
by minimality of $\left(\gamma,l\right)$ we must have $\alpha_{\gamma-i-1,l}=0$;
it follows that $\star d_{\alpha_{0}}\tilde{\alpha}$ does not contribute
either. Finally, we claim that the only terms in the expansion of
$\star\left(\tilde{\alpha}\land\tilde{\alpha}\right)$ that can contribute
to the $x^{\gamma-1}\left(\log x\right)^{l}$ term are $\star_{0}\frac{1}{2}\left[\theta\land\alpha_{\gamma,l}\right]$
and $\star_{0}\frac{1}{2}\left[\alpha_{\gamma,l}\land\theta\right]$.
Indeed, any other candidate must be of the form $\star_{i}\frac{1}{2}\left[\alpha_{\gamma_{1},l_{1}}\land\alpha_{\gamma_{2},l_{2}}\right]$,
with $i\geq0$, $\gamma_{1}+\gamma_{2}+i=\gamma-1$, and $l_{1}+l_{2}=l$.
However, again by minimality of $\left(\gamma,l\right)$, we must
have $\Re\left(\gamma_{i}\right)\geq\Re\left(\gamma\right)$; therefore,
\begin{align*}
\Re\left(\gamma\right)-1 & =\Re\left(\gamma_{1}+\gamma_{2}\right)+i\\
 & \geq2\Re\left(\gamma\right)+i\\
 & \geq2\Re\left(\gamma\right),
\end{align*}
from which we obtain $\Re\left(\gamma\right)\leq-1$. This is in contradiction
with the fact that $\Re\left(\mathcal{E}\right)>-1$. Concluding,
isolating the $x^{\gamma-1}\left(\log x\right)^{l}$ term from both
sides of $\partial_{x}\alpha=-\star f_{\alpha}$, we obtain the equation
\[
\gamma\alpha_{\gamma,l}=-\frac{1}{2}\left(\star_{0}\left[\theta\land\alpha_{\gamma,l}\right]+\star_{0}\left[\alpha_{\gamma,l}\land\theta\right]\right).
\]
Now we use Lemma \ref{lem:useful}: interpreting $\alpha_{\gamma,l}$
as a section of $\mathfrak{gl}\left(T\partial X\right)$, the equation
above becomes
\[
\gamma\alpha_{\gamma,l}=\alpha_{\gamma,l}^{*}-\theta\tr\alpha_{\gamma,l}.
\]
Decomposing
\[
\alpha_{\gamma,l}=\sk\alpha_{\gamma,l}+\symmtf\alpha_{\gamma,l}+\frac{\theta}{3}\tr\alpha_{\gamma,l}
\]
and substituting in the previous equation, we obtain
\[
\left(\gamma+1\right)\sk\alpha_{\gamma,l}+\left(\gamma-1\right)\symmtf\alpha_{\gamma,l}+\left(\gamma+2\right)\frac{\theta}{3}\tr\alpha_{\gamma,l}=0.
\]
If none of the coefficients $\gamma+1$, $\gamma-1$ and $\gamma+2$
vanishes, then the equation above implies that all the components
of $\alpha_{\gamma,l}$ vanish, contradicting the fact that $\alpha_{\gamma,l}\not=0$.
It follows that some of these coefficients must vanish. However, we
have $\Re\left(\gamma\right)>-1$ and either $\gamma\not\in\mathbb{N}$
or $\gamma\in\mathbb{N}$ and $l>\left(\gamma+1\right)/2$. The only
possibility is therefore that $\gamma=1$, in which case we have $l>1$
and the equation above becomes
\[
2\sk\alpha_{1,l}+\theta\tr\alpha_{1,l}=0.
\]
This equation then forces $\alpha_{1,l}$ to be self-adjoint trace-free,
but it does not force $\alpha_{1,l}=0$. Now we must use the fact
that $l>1$ implies a contradiction. This contradiction is obtained
by looking at the term $\left(\log x\right)^{l-1}$ at both sides
of the equation $\partial_{x}\alpha=-\star f_{\alpha}$. Note that,
since $l-1>0$, the smooth term $\star f_{\alpha_{0}}$ still does
not contribute. However, the contribution from $\partial_{x}\alpha$
is $\alpha_{1,l-1}+l\alpha_{1,l}$: the first term comes from differentiating
$x\left(\log x\right)^{l-1}\alpha_{1,l-1}$, while the second term
comes from differentiating $x\left(\log x\right)^{l}\alpha_{1,l}$.
Exactly as above, the contribution for $-\star f_{\alpha}$ is $-\left(\star_{0}\left[\theta\land\alpha_{1,l-1}\right]+\star_{0}\left[\alpha_{1,l-1}\land\theta\right]\right)/2$.
Therefore, we get the equation
\[
\alpha_{1,l-1}+l\alpha_{1,l}=-\frac{1}{2}\left(\star_{0}\left[\theta\land\alpha_{1,l}\right]+\star_{0}\left[\alpha_{1,l}\land\theta\right]\right)
\]
which using Lemma \ref{lem:useful} becomes
\[
2\sk\alpha_{1,l-1}+\theta\tr\alpha_{1,l-1}=-l\alpha_{1,l}.
\]
We have previously established that $\alpha_{1,l}$ is self-adjoint
trace-free; therefore, since $l\not=0$, the equation above implies
that $\alpha_{1,l}=0$. This is in contradiction with the fact that
$\left(1,l\right)\in\mathcal{S}$. It follows that $\mathcal{S}=\emptyset$,
as claimed. We conclude by showing that the expansion is smooth if
and only if $\alpha_{1,1}=0$. Denote by $\mathcal{S}'$ the set of
pairs $\left(k,l\right)$ for which $\alpha_{k,l}\not=0$, $k\geq1$,
and $0<l\leq\left(k+1\right)/2$. The expansion is smooth if and only
if $\mathcal{S}'=\emptyset$. Therefore, we need to show that $\mathcal{S}'=\emptyset$
if and only if $\alpha_{1,1}=0$. The implication $\mathcal{S}'=\emptyset\Rightarrow\alpha_{1,1}=0$
is trivial. Conversely, suppose that $\mathcal{S}'\not=\emptyset$.
Then $\mathcal{S}'$ must have a minimal element, and arguing exactly
as above, this minimal element must be $\left(1,1\right)$, i.e. $\alpha_{1,1}$
must be nonzero.
\end{proof}
We will now describe the coefficients of the expansion (\ref{eq:expansion})
more in detail. We will make heavy use of the results of §\ref{subsec:Geometry-of-4-manifolds-in-a-geodesic-collar},
so let us recall some conventions introduced there. Since we are working
in the geodesic collar $[0,\varepsilon)\times\partial X\to X$ determined
by $h_{0}$, we use the following notation:
\begin{enumerate}
\item $\overline{g}$ is the metric $x^{2}g=dx^{2}+h\left(x\right)$;
\item $\shape$ is the $x$-dependent family of endomorphisms $T\partial X\to T\partial X$
describing the shape operator of the level sets of $x$ in the collar;
\item $\HH$ is the $x$-dependent trace of $\shape$, i.e. the $x$-dependent
mean curvature of the level sets of $x$;
\item $\overline{\R}$, $\overline{\Ric}$, $\overline{\s}$ are the Riemann
curvature, Ricci curvature and scalar curvature of $\overline{g}:=x^{2}g$;
\item $\Ric$, $\s$, $\G$ are the $x$-dependent Ricci tensor, scalar
curvature, and Einstein tensor of the level sets of $x$;
\item $\overline{\R}^{N}$ is the $x$-dependent family of endomorphisms
$T\partial X\to T\partial X$ defined by $\overline{\R}\left(V,\partial_{x}\right)\partial_{x}$;
\item $\overline{\Ric}^{N}$ is the $x$-dependent trace of $\overline{\R}^{N}$.
\end{enumerate}
Whenever we have an $x$-dependent family $\mathcal{S}$ of endomorphisms
$T\partial X\to T\partial X$, we call:
\begin{enumerate}
\item $\mathcal{S}^{\star}$ the dual $x$-dependent family of endomorphisms
$T^{*}\partial X\to T^{*}\partial X$;
\item $\mathcal{S}_{0}$ the evaluation of $\mathcal{S}$ at $x=0$;
\item $\mathring{\mathcal{S}}$ the trace-free part of $\mathcal{S}$.
\end{enumerate}
\begin{thm}
\label{thm:coefficients-of-the-instanton-expansion}Let $A$, $h_{0}$,
$x$, $\alpha\left(x\right)$ be as in Theorem \ref{thm:index-set-0-instantons}.
Let $h\left(x\right)\sim h_{0}+h_{1}x+h_{2}x^{2}+o\left(x^{2}\right)$
be the family of metrics on $\partial X$ in the geodesic normal form
(\ref{eq: geodesic normal form}) associated to $h_{0}$. Moreover,
call $\omega$ the Levi-Civita spin connection on $\left(\mathbb{S},\cl_{h_{0}}\right)\to\left(\partial X,h_{0}\right)$.
Then the coefficients of the expansion (\ref{eq:expansion}) satisfy
the following properties:
\begin{enumerate}
\item The $\SU\left(2\right)$ connection $\alpha_{0}$ on $\mathbb{S}$
is completely determined by the torsion equation
\[
d_{\alpha_{0}}\theta=\star_{1}\theta.
\]
In particular, $\alpha_{0}=\omega$ if and only if $\left(\partial X,h_{0}\right)$
is totally geodesic in $\left(X,x^{2}g\right)$.
\item The skew and trace parts of $\alpha_{1}$ are completely determined
by the equations
\begin{align*}
\sk\alpha_{1} & =\frac{1}{2}\sk\star_{0}\left(\star_{2}\theta-f_{\alpha_{0}}\right)\\
\tr\alpha_{1} & =\frac{1}{3}\tr\star_{0}\left(\star_{2}\theta-f_{\alpha_{0}}\right),
\end{align*}
while the self-adjoint trace-free part of $\alpha_{1}$ is formally
free.
\item The coefficient $\alpha_{1,1}$ is self-adjoint, trace-free, and completely
determined by the equation
\begin{align*}
\alpha_{1,1} & =\symmtf\star_{0}\left(\star_{2}\theta-f_{\alpha_{0}}\right).
\end{align*}
In particular, if $\left(\partial X,h_{0}\right)$ is totally geodesic
in $\left(X,x^{2}g\right)$, then $\alpha_{1,1}$ vanishes if and
only if $\tf h_{2}=-\mathring{\Ric}\left(h_{0}\right)$.
\item Every coefficient $\alpha_{k,l}$ in the expansion \ref{eq:expansion},
with $k>1$, is determined by $\symmtf\alpha_{1}$ and the symmetric
$2$-tensors $h_{0},h_{1},...,h_{k+1}$ on $\partial X$, via differential
relations.
\end{enumerate}
\end{thm}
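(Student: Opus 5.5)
The plan is to substitute the expansion (\ref{eq:expansion}) into the evolution equation $\partial_{x}\alpha=-\star f_{\alpha}$ from the preceding lemma, with $\star\sim\sum_{k}\star_{k}x^{k}$, and match coefficients of $x^{k-1}(\log x)^{l}$ order by order, as in the proof of Theorem \ref{thm:index-set-0-instantons}. Write $\alpha=\theta x^{-1}+\alpha_{0}+\tilde{\alpha}$. Then
\[
f_{\alpha}=x^{-2}\tfrac{1}{2}[\theta\wedge\theta]+x^{-1}d_{\alpha_{0}}\theta+f_{\alpha_{0}}+x^{-1}[\theta\wedge\tilde{\alpha}]+d_{\alpha_{0}}\tilde{\alpha}+\tfrac{1}{2}[\tilde{\alpha}\wedge\tilde{\alpha}].
\]

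\textbf{Order $x^{-2}$.} Matching gives $-\theta=-\star_{0}\frac{1}{2}[\theta\wedge\theta]$, which is the Nahm pole identity already established in Proposition \ref{prop:nahm-residues}.

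\textbf{Order $x^{-1}$ (no log terms).} The left side contributes nothing, since $\partial_{x}\alpha_{0}=0$. The right side contributes $-\star_{1}\frac{1}{2}[\theta\wedge\theta]-\star_{0}d_{\alpha_{0}}\theta$. Using $\frac{1}{2}[\theta\wedge\theta]=\star_{0}^{-1}\theta$, the equation becomes $d_{\alpha_{0}}\theta=-\star_{0}^{-1}\star_{1}\star_{0}^{-1}\theta$. Up to the paper's convention for $\star_{1}$ acting on forms of different degrees, this is the stated torsion equation $d_{\alpha_{0}}\theta=\star_{1}\theta$; the signs will be fixed by checking against the identities of §\ref{subsec:Geometry-of-spin-3D}. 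Uniqueness of $\alpha_{0}$ follows from the standard fact that an $\SU(2)$ connection is determined by $d_{\alpha_{0}}\theta$: the difference of two solutions $\beta$ satisfies $[\beta\wedge\theta]=0$, which forces $\beta=0$ by the invertibility in Lemma \ref{lem:useful}. Since $\star_{1}$ is linear in $h_{1}=-2\II$, we get $\alpha_{0}=\omega$ exactly when $\star_{1}\theta=0$, that is, when $\II=0$.

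\textbf{Order $x^{0}$.} The log and non-log parts are matched separately.
\begin{itemize}
\item From the $\log x$ term: $\alpha_{1,1}=-\star_{0}([\theta\wedge\alpha_{1,1}])$, which by Lemma \ref{lem:useful} reads $\alpha_{1,1}=\alpha_{1,1}^{*}-\theta\tr\alpha_{1,1}$. Hence $\alpha_{1,1}$ is self-adjoint and trace-free.
\item From the non-log term: $\alpha_{1,1}+\alpha_{1}=-\star_{0}[\theta\wedge\alpha_{1}]-\star_{0}f_{\alpha_{0}}-\star_{1}d_{\alpha_{0}}\theta-\star_{2}\frac{1}{2}[\theta\wedge\theta]$. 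Using the torsion equation and the Nahm identity, the last two terms collapse to an expression in $\star_{2}\theta$ (together with a $\star_{1}^{2}$ correction, which I expect to be absorbed by expanding $\star$ correctly, e.g. via $\star^{2}=1$). The equation then becomes $\alpha_{1,1}+\alpha_{1}-\alpha_{1}^{*}+\theta\tr\alpha_{1}=\star_{0}(\star_{2}\theta-f_{\alpha_{0}})$.
\end{itemize}
Taking the skew, trace and symmetric trace-free parts with the decomposition used in Theorem \ref{thm:index-set-0-instantons}:
\begin{itemize}
\item the skew part gives $2\sk\alpha_{1}=\sk(\cdot)$;
\item the trace part gives $3\tr\alpha_{1}+\tr\alpha_{1}\cdot\ldots$, i.e.\ after normalising, $\tr\alpha_{1}=\frac{1}{3}\tr(\cdot)$;
\item the symmetric trace-free part gives $\alpha_{1,1}=\symmtf(\cdot)$.
\end{itemize}
In the trace-free symmetric component the $\alpha_{1}$ coefficient vanishes, since $\gamma-1=0$ at $\gamma=1$, so $\symmtf\alpha_{1}$ never appears and is formally free. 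For the totally geodesic case, $\alpha_{0}=\omega$ and $f_{\omega}$ corresponds via $\star_{0}$ to the Einstein tensor $\G$. The term $\star_{2}\theta$ is computed from $h_{2}$ by differentiating $\star$ with respect to the metric. Taking the trace-free part, the condition becomes $\tf h_{2}=-\mathring{\Ric}(h_{0})$.

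\textbf{Order $k>1$.} At order $x^{k-1}(\log x)^{l}$ the equation has the form
\[
k\alpha_{k,l}-\alpha_{k,l}^{*}+\theta\tr\alpha_{k,l}+(l+1)\alpha_{k,l+1}=\text{(terms in lower coefficients and }\star_{j},\ j\le k+1).
\]
The linear operator acts on the three components by the factors $k+1$, $k-1$, $k+2$, all nonzero for $k\ge2$, so it is invertible. Solving by descending induction in $l$ (starting from $l=\lfloor(k+1)/2\rfloor$) and then ascending in $k$ determines every $\alpha_{k,l}$ from lower data. The $\star_{j}$ involve $h_{0},\dots,h_{j}$, and the highest index that enters is $j=k+1$, via the $\star_{k+1}\frac{1}{2}[\theta\wedge\theta]$ term.

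The main obstacle is the order-$x^{0}$ bookkeeping in item (3): correctly expanding $\star$ on $1$-forms versus $2$-forms and identifying $\star_{0}f_{\omega}$ and $\star_{2}\theta$ with $\G(h_{0})$ and $h_{2}$ using the collar formulas of §\ref{subsec:Geometry-of-4-manifolds-in-a-geodesic-collar}.
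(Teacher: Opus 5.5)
Your proposal is correct and is essentially the paper's proof. You use the same coefficient matching in $\partial_{x}\alpha=-\star f_{\alpha}$ at orders $x^{-1}$, $x^{0}\log x$, $x^{0}$ and $x^{k-1}(\log x)^{l}$, reduced via Lemma \ref{lem:useful}, and $\star^{2}=1$ supplies exactly the two identities you anticipate: $\star_{0}\star_{1}=-\star_{1}\star_{0}$ turns your $-\star_{0}\star_{1}\star_{0}\theta$ into $\star_{1}\theta$, with no convention issue, and $\star_{0}\star_{2}+\star_{1}^{2}+\star_{2}\star_{0}=0$ absorbs the $\star_{1}^{2}$ term.

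One step needs tightening. Linearity of $h_{1}\mapsto\star_{1}$ only gives $\II=0\Rightarrow\alpha_{0}=\omega$; the converse needs injectivity. This follows from $\star_{1}=\star_{0}(2\shape_{0}^{\star}-\HH_{0})$: if $\star_{1}\theta=0$ then $2\shape_{0}=\HH_{0}\,\id$, and taking the trace forces $\HH_{0}=0$, hence $\shape_{0}=0$.

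For the last claim of (3), the route you indicate works and is slightly more direct than the paper's detour through $\overline{\R}^{N}_{0}=-h_{2}$. When $h_{1}=0$, $\star_{2}$ is the first variation $\star_{0}(\frac{1}{2}\tr_{h_{0}}h_{2}-h_{2}^{\sharp\star})$.
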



\begin{proof}[Proof of Theorem \ref{thm:coefficients-of-the-instanton-expansion}]
As in the previous proof, write the equation $\partial_{x}\alpha=-\star f_{\alpha}$
in the form
\begin{equation}
\partial_{x}\alpha=-\star\left(f_{\alpha_{0}}+d_{\alpha_{0}}\tilde{\alpha}+\frac{1}{2}\left[\tilde{\alpha}\land\tilde{\alpha}\right]\right),\label{eq:SD-in-vertical-gauge}
\end{equation}
with $\tilde{\alpha}=\alpha-\alpha_{0}$.\\
(1) Isolating the $x^{-1}$ term in Equation (\ref{eq:SD-in-vertical-gauge}),
we obtain the equation
\[
0=-\star_{0}d_{\alpha_{0}}\theta-\frac{1}{2}\star_{1}\left[\theta\land\theta\right].
\]
Since $\theta$ is the soldering form, we get
\[
\star_{0}\frac{1}{2}\left[\theta\land\theta\right]=\theta.
\]
Moreover, since $\star^{2}=1$, we get $\star_{0}\star_{1}=-\star_{1}\star_{0}$.
Therefore, applying $\star_{0}$ to the previous equation, we obtain
\[
d_{\alpha_{0}}\theta=\star_{1}\theta.
\]
By Lemma \ref{lem:connection-induced-by-torsion}, we have
\[
\alpha_{0}=\omega+\sk\star_{0}\star_{1}\theta-\symmtf\star_{0}\star_{1}\theta+\frac{1}{6}\left(\tr\star_{0}\star_{1}\theta\right)\theta.
\]
By Lemma \ref{lem:computations-in-collar}, we have $\star_{1}=\star_{0}\left(2\shape_{0}^{\star}\theta-\HH_{0}\theta\right)$.
Therefore, we get
\begin{align*}
\alpha_{0} & =\omega-2\symmtf\left(\shape_{0}^{\star}\theta\right)-\frac{1}{6}\HH_{0}\theta\\
 & =\omega-2\shape_{0}^{\star}\theta+\frac{1}{2}\HH_{0}\theta.
\end{align*}
It follows that $\alpha_{0}=\omega$ if and only if $\shape_{0}=0$,
i.e. if and only if the embedding $\left(\partial X,h_{0}\right)\to\left(X,\overline{g}\right)$
is totally geodesic.\\
(2 and 3) Using the computations of the previous proof, we isolate
the $\log x$ term in Equation (\ref{eq:SD-in-vertical-gauge}) and
we obtain
\[
2\sk\alpha_{1,1}+\theta\tr\alpha_{1,1}=0,
\]
which implies that $\alpha_{1,1}$ is self-adjoint trace-free. Now,
isolating the constant term, we obtain the equation

\[
\alpha_{1}+\alpha_{1,1}=-\star_{0}f_{\alpha_{0}}-\star_{1}d_{\alpha_{0}}\theta-\star_{0}\frac{1}{2}\left(\left[\theta\land\alpha_{1}\right]+\left[\alpha_{1}\land\theta\right]\right)-\star_{2}\frac{1}{2}\left[\theta\land\theta\right].
\]
Using Lemma \ref{lem:useful}, and the previous identity $d_{\alpha_{0}}\theta=\star_{1}\theta$,
we obtain
\[
\alpha_{1}+\alpha_{1,1}=-\star_{0}f_{\alpha_{0}}-\star_{1}^{2}\theta-\theta\tr\alpha_{1}+\alpha_{1}^{*}-\star_{2}\frac{1}{2}\left[\theta\land\theta\right].
\]
Note that, since $\star^{2}=1$, the coefficient of $x^{2}$ in the
expansion of $\star^{2}$ must be zero, i.e. $\star_{0}\star_{2}+\star_{1}^{2}+\star_{2}\star_{0}=0$.
Moreover, the Nahm pole boundary condition implies that
\[
\frac{1}{2}\left[\theta\land\theta\right]=\star_{0}\theta.
\]
Therefore, the equation above becomes
\[
2\sk\alpha_{1}+\theta\tr\alpha_{1}+\alpha_{1,1}=\star_{0}\left(\star_{2}\theta-f_{\alpha_{0}}\right).
\]
This implies that
\begin{align*}
\sk\alpha_{1} & =\frac{1}{2}\sk\star_{0}\left(\star_{2}\theta-f_{\alpha_{0}}\right)\\
\tr\alpha_{1} & =\frac{1}{3}\tr\star_{0}\left(\star_{2}\theta-f_{\alpha_{0}}\right)\\
\alpha_{1,1} & =\symmtf\star_{0}\left(\star_{2}\theta-f_{\alpha_{0}}\right).
\end{align*}
By Lemma \ref{lem:computations-in-collar}, $h_{1}=0$ if and only
if $\left(\partial X,h_{0}\right)$ is totally geodesic in $\left(X,\overline{g}\right)$.
Assuming this condition, by the previous point we have $f_{\alpha_{0}}=f_{\omega}$.
As explained in §\ref{subsec:Geometry-of-spin-3D}, the $\mathfrak{su}\left(\mathbb{S}\right)$-valued
$1$-form $\star_{0}f_{\omega}$ is the endomorphism $\G_{0}^{\sharp}:T\partial X\to T\partial X$
obtained by raising an index of the Einstein tensor $\G_{0}$ of $h_{0}$.
Moreover, still by Lemma \ref{lem:computations-in-collar}, we have
\begin{align*}
\star_{0}\star_{2} & =\overline{\R}_{0}^{N\star}-\frac{1}{2}\overline{\Ric}_{0}^{N}.
\end{align*}
Therefore,
\[
\star_{0}\left(\star_{2}\theta-f_{\alpha_{0}}\right)=\left(\overline{\R}_{0}^{N\star}-\frac{1}{2}\overline{\Ric}_{0}^{N}-\G_{0}^{\sharp\star}\right)\theta.
\]
Taking the trace-free part and lowering an index, we get
\begin{align*}
\alpha_{1,1}^{\flat} & =\tf\overline{\R}_{0}^{N}-\mathring{\Ric}_{0}.
\end{align*}
Again by Lemma \ref{lem:computations-in-collar}, if $h_{1}=0$ then
$h_{2}=-\overline{\R}_{0}^{N}$. Therefore, $\alpha_{1,1}=0$ if and
only if $\tf h_{2}=-\mathring{\Ric}_{0}$.\\
(4) If we isolate the coefficients of $x^{k-1}\left(\log x\right)^{l}$
in Equation (\ref{eq:SD-in-vertical-gauge}), arguing as above we
get an equation of the form
\[
k\alpha_{k,l}+\star_{0}\frac{1}{2}\left(\left[\alpha_{-1}\land\alpha_{k,l}\right]+\left[\alpha_{k,l}\land\alpha_{-1}\right]\right)=T_{k,l},
\]
where $T_{k,l}$ is a term which depends only on the metric and the
coefficients $\alpha_{k',l'}$ of the expansion of $\alpha$ with
$\left(k',l'\right)<\left(k,l\right)$. Since $k>1$, the left hand
side can be seen as a linear combination of the skew-adjoint, trace,
and self-adjoint trace-free parts of $\alpha_{k,l}$ with constant
non-zero coefficients. Therefore, this equation completely prescribes
$\alpha_{k,l}$.
\end{proof}

\subsection{\label{subsec:The--instanton-obstruction}The $0$-instanton obstruction
tensor}

Let $A$ be a self-dual polyhomogeneous $0$-connection satisfying
the Nahm pole boundary condition. As above, let us work in a geodesic
collar induced by a choice of $h_{0}\in\mathfrak{c}_{\infty}\left(g\right)$.
In the previous subsection, we proved that the associated geodesic
normal family $\alpha_{h_{0}}\left(x\right)$ of $A$ has an expansion
of the form
\[
\alpha_{h_{0}}\left(x\right)\sim\theta x^{-1}+\alpha_{0}+\alpha_{1,1}x\log x+\alpha_{1}x+o\left(x\right),
\]
where:
\begin{enumerate}
\item $\theta$ is the $\mathfrak{su}\left(\mathbb{S}\right)$-valued soldering
form;
\item $\alpha_{0}$ is the $\SU\left(2\right)$ connection on $\mathbb{S}$
determined by $d_{\alpha_{0}}\theta=\star_{1}\theta$;
\item $\alpha_{1,1}$ is the self-adjoint trace-free endomorphism of $T\partial X$
\[
\alpha_{1,1}=\symmtf\star_{0}\left(\star_{2}\theta-f_{\alpha_{0}}\right).
\]
\end{enumerate}
We also saw that the expansion does not contain log terms if and only
if $\alpha_{1,1}=0$. This tensor is clearly analogous to the obstruction
tensor arising in the Fefferman--Graham expansion of Poincaré--Einstein
metrics, cf. §\ref{subsec:Poincar=0000E9=002013Einstein-metrics}.
For this reason, we formulate the following
\begin{defn}
We call $\symmtf\star_{0}\left(\star_{2}\theta-f_{\alpha_{0}}\right)$
the \emph{$0$-instanton obstruction tensor }of $\left(X,g\right)$.
\end{defn}

Note that the $0$-instanton obstruction tensor is manifestly independent
of $A$. However, \emph{a priori}, this tensor seems to depend on
the choice of $h_{0}\in\mathfrak{c}_{\infty}\left(g\right)$. In the
next theorem, we prove that the $0$-instanton obstruction tensor
is in fact a \emph{conformal invariant} of $\left(X,g\right)$: more
precisely, it is a conformal invariant which depends only on the second
jet of the conformal class of $g$ at $\partial X$. In order to formulate
the theorem precisely, recall again some notation from §\ref{subsec:Geometry-of-4-manifolds-in-a-geodesic-collar}:
\begin{enumerate}
\item $\overline{\W}$ is the Weyl curvature tensor of $\left(X,\overline{g}\right)$
on the collar;
\item $\overline{\W}^{B}$ and $\overline{\W}^{E}$ are the $x$-dependent
magnetic and electric parts of $\overline{\W}$ along the level sets
of $x$, i.e. the $x$-dependent endomorphisms of $T\partial X$ defined
by
\begin{align*}
T\partial X & \to T\partial X\\
\overline{\W}^{E}:V & \mapsto\overline{\W}\left(V,\partial_{x}\right)\partial_{x}\\
\overline{\W}^{B}:V & \mapsto\left(*\overline{\W}^{E}\right)V.
\end{align*}
\end{enumerate}
Again, we denote by $\overline{\W}_{0}^{B}$ and $\overline{\W}_{0}^{E}$
the restrictions of $\overline{\W}^{B}$, $\overline{\W}^{E}$ at
$x=0$.
\begin{thm}
\label{thm:0-instanton-obstruction-tensor-conf-invariant}The $0$-instanton
obstruction tensor of $\left(X,g\right)$ is conformally invariant;
more precisely, it coincides with $2\left(\overline{\W}_{0}^{B}-\overline{\W}_{0}^{E}\right)$.
\end{thm}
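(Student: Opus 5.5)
The plan is to compute the obstruction tensor $\symmtf\star_{0}(\star_{2}\theta-f_{\alpha_{0}})$ explicitly in terms of the geometry of $\overline{g}=dx^{2}+h(x)$ near $x=0$. We then recognize the result as $2(\overline{\W}_{0}^{B}-\overline{\W}_{0}^{E})$. Conformal invariance follows at the end: the Weyl tensor is conformally covariant, and its electric/magnetic parts at $\partial X$ do not depend on the choice of $h_{0}$.

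First, by part (1) of Theorem \ref{thm:coefficients-of-the-instanton-expansion}, $\alpha_{0}=\omega-2\shape_{0}^{\star}\theta+\frac12\HH_{0}\theta$. Expanding the curvature gives
\[
f_{\alpha_{0}}=f_{\omega}+d_{\omega}\beta+\tfrac12[\beta\wedge\beta],\qquad \beta=-2\shape_{0}^{\star}\theta+\tfrac12\HH_{0}\theta .
\]
We already know $\star_{0}f_{\omega}=\G_{0}^{\sharp}$ from §\ref{subsec:Geometry-of-spin-3D}. Using Lemma \ref{lem:useful}, I would convert $\star_{0}d_{\omega}(\mathcal{S}^{\star}\theta)$ into the endomorphism built from $\curl\shape_{0}$. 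This piece is the source of the skew/curl-type term. Likewise I would convert $\star_{0}\frac12[\beta\wedge\beta]$ into a quadratic expression in $\shape_{0}$: essentially $\mathcal{S}\mapsto$ its cofactor (adjugate-type) matrix, with trace corrections. Together with $\star_{0}\star_{2}=\overline{\R}_{0}^{N\star}-\frac12\overline{\Ric}_{0}^{N}$ from Lemma \ref{lem:computations-in-collar}, this gives
\[
\symmtf\star_{0}(\star_{2}\theta-f_{\alpha_{0}})=\tf\big(\overline{\R}_{0}^{N}-\Ric_{0}+Q(\shape_{0})\big)-2\,\symmtf\curl\shape_{0},
\]
where $Q$ is quadratic in $\shape_{0}$.

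Second, I would bring in the Gauss and Codazzi equations for the level set $x=0$ in $(X,\overline{g})$. Gauss expresses the tangential curvature $\overline{\R}(V,W)$ through $\Ric_{0}$ and $\shape_{0}$. Combined with the 4-dimensional decomposition of Weyl, $\overline{\W}^{E}=\tf\big(\overline{\R}^{N}\big)-(\text{Ricci-part})$, this yields
\[
\overline{\W}_{0}^{E}=\tfrac12\,\tf\big(\overline{\R}_{0}^{N}-\Ric_{0}+\HH_{0}\shape_{0}-\shape_{0}^{2}\big).
\]
This is the standard fact that in dimension 4 the trace-free parts of the normal-normal curvature and of the intrinsic-plus-Gauss curvature share the Weyl part. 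Codazzi gives the mixed components $\overline{\R}(V,W,U,\partial_{x})$ in terms of $d^{\nabla}\shape_{0}$. Since the magnetic part is the $\ast$-dual of these mixed components, we get $\overline{\W}_{0}^{B}=-\symmtf\curl\shape_{0}$, with signs to be fixed by the orientation convention. Matching the two expressions term by term should then give the identity with coefficient $2$ in front of each part; this is consistent with part (3) in the totally geodesic case.

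The main obstacle is the bookkeeping in the first step. The quadratic terms $Q(\shape_{0})$ coming from $[\beta\wedge\beta]$ must match exactly the Gauss-equation terms $\HH_{0}\shape_{0}-\shape_{0}^{2}$ after taking trace-free parts, and all sign and orientation conventions (Stokes orientation, $e^{0}=-dx/x$) must be consistent. I would first check the formula in the special case $\shape_{0}$ umbilic, where everything trace-free from $\shape_{0}$ drops out, and in the totally geodesic case, where part (3) already gives $\tf\overline{\R}_{0}^{N}-\mathring{\Ric}_{0}$. Only after these checks would I do the general computation in an orthonormal frame diagonalizing $\shape_{0}$ at a point.

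As an alternative cross-check, I would verify conformal invariance directly. Under a change $h_{0}\mapsto e^{2\varphi}h_{0}$, both $\alpha_{1,1}$ (defined invariantly via the gauge-invariant geodesic normal family, with a log coefficient that transforms tensorially) and $2(\overline{\W}_{0}^{B}-\overline{\W}_{0}^{E})$ rescale by the same weight.
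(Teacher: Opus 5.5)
Your route is the paper's: you expand $f_{\alpha_0}$ about $\omega$ with $\beta=-2\shape_0^\star\theta+\frac12\HH_0\theta$, then use $\star_0 f_\omega=\G_0^\sharp$, Lemma \ref{lem:useful}, the curl, and Proposition \ref{prop:electric-magnetic-Weyl}. Two steps fail as written.

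\textbf{The formula for $\star_0\star_2$.} The identity $\star_0\star_2=\overline{\R}_0^{N\star}-\frac12\overline{\Ric}_0^N$ holds only when $\shape_0=0$; it is the specialisation used in Theorem \ref{thm:coefficients-of-the-instanton-expansion}(3). In general, point (10) of Lemma \ref{lem:computations-in-collar} gives
\[
\star_0\star_2=3(\shape_0^\star)^2-2\HH_0\shape_0^\star+\tfrac12\HH_0^2+\overline{\R}_0^{N\star}-\tfrac12\tr\shape_0^2-\tfrac12\overline{\Ric}_0^N .
\]
So $\star_2$ itself contributes $\tf(3\shape_0^2-2\HH_0\shape_0)$. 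The bracket term $-\star_0\frac12[\beta\wedge\beta]$ contributes $-\tf(4\shape_0^2-3\HH_0\shape_0)$. On its own this cannot match the Gauss terms $\tf(\HH_0\shape_0-\shape_0^2)$; only the sum of the two contributions does. Your proposed checks cannot catch this, because in the umbilic and totally geodesic cases every trace-free quadratic expression in $\shape_0$ vanishes. You also need to note that $\star_0 d_\omega(\HH_0\theta)=\star_0(d\HH_0\wedge\theta)$ is skew, so that only $\shape_0$ feeds the symmetric curl. With the paper's definition of $\curl$, that curl term is $+2\symmtf\curl\shape_0$, not $-2\symmtf\curl\shape_0$.

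\textbf{The sign of the electric part.} Take your formula $\overline{\W}^E_0=\frac12\tf(\overline{\R}^N_0-\Ric_0+\HH_0\shape_0-\shape_0^2)$. Then the non-curl part of the obstruction, $\tf(\overline{\R}^N_0-\Ric_0+\HH_0\shape_0-\shape_0^2)$, equals $+2\overline{\W}^E_0$. Matching term by term therefore yields $2\overline{\W}^E_0\pm 2\overline{\W}^B_0$, never the stated $2(\overline{\W}^B_0-\overline{\W}^E_0)$. Your claimed consistency with part (3) fails for the same reason: for $\shape_0=0$ you have $\overline{\W}^B_0=0$ and $\alpha_{1,1}=\tf\overline{\R}^N_0-\mathring{\Ric}_0=+2\overline{\W}^E_0$.

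The paper gets the minus sign only because Proposition \ref{prop:electric-magnetic-Weyl} (transcribed from Fefferman--Graham via $\overline{\W}^E=\overline{\W}_{a00}{}^b$) uses $\overline{\W}^E=\frac12\tf(\shape^2-\overline{\R}^N+\Ric^\sharp-\HH\shape)$, the negative of yours. Orientation cannot settle this, since $\overline{\W}^E$ involves no $\ast$.

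A concrete check: for the product $\overline g=dx^2+h_0$, the obstruction is $-\mathring{\Ric}(h_0)$. Reading $\overline{\W}^E V=\overline{\W}(V,\partial_x)\partial_x$ literally, with the appendix's curvature convention, gives $\overline{\W}^E=-\frac12\mathring{\Ric}(h_0)$, which is your sign. You therefore have to fix the sign convention for $\overline{\W}$ explicitly and carry it consistently through both $\overline{\W}^E$ and $\overline{\W}^B=(\ast\overline{\W})^E$. The sign in front of $\overline{\W}^E_0$ in the theorem depends exactly on that choice, so it cannot be left to a final term-by-term matching.
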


\begin{proof}
The proof is an elementary book-keeping exercise in recognizing the
terms in $\symmtf\star_{0}\left(\star_{2}\theta-f_{\alpha_{0}}\right)$
with the help of Lemmas \ref{lem:computations-in-collar} and \ref{lem:useful}.
From Point 10 of Lemma \ref{lem:computations-in-collar}, we have
\[
\star_{0}\star_{2}=3\left(\shape_{0}^{\star}\right)^{2}-2\shape_{0}^{\star}\HH_{0}+\frac{1}{2}\HH_{0}^{2}+\overline{\R}_{0}^{N\star}-\frac{1}{2}\tr\left(\shape_{0}^{2}\right)-\frac{1}{2}\overline{\Ric}_{0}^{N}.
\]
Therefore, we obtain
\[
\symmtf\left(\star_{0}\star_{2}\theta\right)=\tf\left(3\left(\shape_{0}^{\star}\right)^{2}-2\HH_{0}\shape_{0}^{\star}+\overline{\R}_{0}^{N\star}\right)\theta.
\]
Now, from the proof of Theorem \ref{thm:coefficients-of-the-instanton-expansion},
the equation $d_{\alpha_{0}}\theta=\star_{1}\theta$ determines the
connection $\alpha_{0}$ as
\[
\alpha_{0}=\omega-2\shape_{0}^{\star}\theta+\frac{1}{2}\HH_{0}\theta,
\]
where $\omega$ is the Levi-Civita spin connection on $\left(\mathbb{S},\cl_{h_{0}}\right)\to\left(\partial X,h_{0}\right)$.
Define $\gamma=-2\shape_{0}^{\star}\theta+\frac{1}{2}\HH_{0}\theta$.
Then we have
\[
\star_{0}f_{\alpha_{0}}=\star_{0}f_{\omega}+\star_{0}d_{\omega}\gamma+\star_{0}\frac{1}{2}\left[\gamma\land\gamma\right].
\]
Since $\star_{0}f_{\omega}=\G_{0}^{\sharp}$, we have $\tf\star_{0}f_{\omega}=\mathring{\Ric}_{0}^{\sharp}$.
To compute the trace-free part of $\star_{0}\frac{1}{2}\left[\gamma\land\gamma\right]$,
we use Lemma \ref{lem:useful} and we obtain
\[
\tf\star_{0}\frac{1}{2}\left[\gamma\land\gamma\right]=\tf\left(4\left(\shape_{0}^{\star}\right)^{2}-3\HH_{0}\shape_{0}^{\star}\right)\theta.
\]
Concerning the term $\star_{0}d_{\omega}\gamma$, we see that $\star_{0}d_{\omega}\left(\HH_{0}\theta\right)$
is essentially the gradient of $\HH_{0}$, and therefore its interpretation
as an endomorphism of $T\partial X$ is skew-adjoint. The remaining
part is a multiple of $\star_{0}d_{\omega}\left(\shape_{0}^{\star}\theta\right)$,
and by definition and by Proposition \ref{prop:electric-magnetic-Weyl}
we have
\[
\star_{0}d_{\omega}\left(\shape_{0}^{\star}\theta\right)=\overline{\W}_{0}^{B}.
\]
Putting it all together and applying again Proposition \ref{prop:electric-magnetic-Weyl},
we obtain
\begin{align*}
\symmtf\star_{0}\left(\star_{2}\theta-f_{\alpha_{0}}\right) & =\tf\left(-\left(\shape_{0}^{\star}\right)^{2}+\overline{\R}_{0}^{N\star}-\Ric_{0}^{\sharp\star}+\HH_{0}\shape_{0}^{\star}\right)\theta+2\overline{\W}_{0}^{B\star}\theta\\
 & =2\left(\overline{\W}_{0}^{B}-\overline{\W}_{0}^{E}\right)
\end{align*}
as claimed.
\end{proof}
\begin{cor}
The $0$-instanton obstruction tensor of $\left(X,g\right)$ vanishes
if and only if the anti-self-dual Weyl tensor of the conformal metric
$\left[g\right]$ vanishes along $\partial X$.
\end{cor}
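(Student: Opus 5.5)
By Theorem \ref{thm:0-instanton-obstruction-tensor-conf-invariant}, the $0$-instanton obstruction tensor equals $2\left(\overline{\W}_{0}^{B}-\overline{\W}_{0}^{E}\right)$. It therefore suffices to show that, at each point of $\partial X$, the combination $\overline{\W}^{B}-\overline{\W}^{E}$ vanishes if and only if $\overline{\W}^{-}$ vanishes. This is a pointwise, purely algebraic statement about Weyl tensors in dimension four. It does not depend on the choice of $h_{0}$, since $\overline{\W}^{-}$ for $\overline{g}=x^{2}g$ differs from that of any other compactification only by a positive conformal factor. The vanishing of $\overline{\W}^{-}$ along $\partial X$ is thus a property of the conformal class $[g]$.

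The key step is the standard electric/magnetic decomposition in Lorentzian-style language, carried out here in Riemannian signature. Fix $p\in\partial X$ and an oriented orthonormal frame $\partial_{x},e_{1},e_{2},e_{3}$ of $(T_{p}X,\overline{g})$. The map $V\mapsto\partial_{x}^{\flat}\land V^{\flat}$, combined with the projections $\frac{1}{2}(1\pm*)$, identifies $T_{p}\partial X$ isometrically (up to a factor) with $\Lambda_{p}^{+}$ and with $\Lambda_{p}^{-}$. The Weyl tensor, viewed as a symmetric endomorphism of $\Lambda^{2}$, is block diagonal, $\overline{\W}=\overline{\W}^{+}\oplus\overline{\W}^{-}$, because it commutes with $*$ in dimension four. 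Writing $\overline{\W}$ in the basis $\{\partial_{x}\land e_{i}\}\cup\{*(\partial_{x}\land e_{i})\}$, the block on $\partial_{x}\land e_{i}$ is $\overline{\W}^{E}$ (up to sign convention). The commutation of $\overline{\W}$ with $*$ then forces the $*(\partial_{x}\land e_{i})$ block to also be $\overline{\W}^{E}$ and the off-diagonal blocks to be $\pm\overline{\W}^{B}$, where $\overline{\W}^{B}=*\overline{\W}^{E}$ as in the definition. Transporting to $\Lambda^{\pm}$ by the identifications above, one obtains $\overline{\W}^{\pm}\cong\overline{\W}^{E}\pm\overline{\W}^{B}$ as symmetric trace-free endomorphisms of $T_{p}\partial X$ (with signs depending on the Stokes orientation convention).

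With the paper's convention, the anti-self-dual part corresponds to $\overline{\W}^{E}-\overline{\W}^{B}$. Consequently $2(\overline{\W}_{0}^{B}-\overline{\W}_{0}^{E})=-2\overline{\W}^{-}|_{\partial X}$ under this identification, and it vanishes exactly when $\overline{\W}^{-}$ does along $\partial X$.

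The main obstacle is the sign bookkeeping. One has to check that, with the Stokes orientation (outward normal first, so $-\partial_{x}$ leads) and the sign convention for $\overline{\W}^{B}$ used in Proposition \ref{prop:electric-magnetic-Weyl}, the combination $\overline{\W}^{B}-\overline{\W}^{E}$ really pairs with $\Lambda^{-}$ and not $\Lambda^{+}$. I would verify this against a test case. Corollary \ref{cor:LC-0-conn-S+-of-PE-is-SD}, together with Theorem \ref{thm:index-set-0-instantons}, shows that on a Poincaré--Einstein manifold the spin connection on $\mathbb{S}_{+}$ has a log-free expansion, so the obstruction tensor vanishes there. Alternatively, one can use a self-dual conformally flat-on-one-side model, such as a neighbourhood in a half-conformally-flat metric with $\overline{\W}^{-}=0$ but $\overline{\W}^{+}\neq0$, and check that the obstruction vanishes. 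Either check fixes the sign and completes the proof.
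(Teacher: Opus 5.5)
Your argument is correct and is essentially the paper's proof. The paper uses the same algebraic fact: a Weyl-type tensor along $\partial X$ is determined by its electric and magnetic parts. It then observes that $(\overline{\W}^{-})^{E}=\frac{1}{2}(\overline{\W}^{E}-\overline{\W}^{B})$ and $(\overline{\W}^{-})^{B}=-(\overline{\W}^{-})^{E}$. This is exactly your block form $\left(\begin{matrix}A & B\\ B & A\end{matrix}\right)$ restricted to $\Lambda^{-}$.

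Two corrections to your closing paragraph. First, the sign ``obstacle'' is not real. $\overline{\W}^{B}:=(*\overline{\W})^{E}$ and $\overline{\W}^{-}:=\frac{1}{2}(\overline{\W}-*\overline{\W})$ are built from the same $*$, so $\overline{\W}^{E}-\overline{\W}^{B}$ pairs with $\Lambda^{-}$ by linearity alone, whatever the orientation convention. Your own block computation already shows this.

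Second, your Poincar\'e--Einstein test could not have fixed a sign anyway. For metrics Poincar\'e--Einstein to second order, $\shape_{0}=0$ and $\overline{\R}_{0}^{N}=\PP(h_{0})$. Proposition \ref{prop:electric-magnetic-Weyl} then gives $\overline{\W}_{0}^{E}=\overline{\W}_{0}^{B}=0$. So both combinations $\overline{\W}_{0}^{B}\mp\overline{\W}_{0}^{E}$ vanish, and the test carries no information about $\Lambda^{+}$ versus $\Lambda^{-}$.
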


\begin{proof}
By the Gauss--Codazzi equations, any Weyl-type tensor is completely
determined on the boundary by its electric and magnetic parts. Now,
observe that
\begin{align*}
\left(\overline{\W}^{-}\right)^{E} & =\frac{1}{2}\left(\overline{\W}-*\overline{\W}\right)^{E}=\frac{1}{2}\left(\overline{\W}^{E}-\overline{\W}^{B}\right),
\end{align*}
and
\[
\left(\overline{\W}^{-}\right)^{B}=\left(*\overline{\W}^{-}\right)^{E}=-\left(\overline{\W}^{-}\right)^{E}.
\]
Therefore, $\overline{\W}_{0}^{-}$ vanishes if and only if $\overline{\W}_{0}^{E}-\overline{\W}_{0}^{B}=0$,
i.e. if and only if the $0$-instanton obstruction tensor vanishes.
\end{proof}
\begin{cor}
\label{cor:asymptotically-PE-implies-vanishing-obstruction}If $\left(X,g\right)$
is conformally Poincaré--Einstein to second order, then the $0$-instanton
obstruction tensor vanishes.
\end{cor}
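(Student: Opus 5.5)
The plan is to reduce to a statement about the Weyl tensor of a conformal compactification, and then use the fact that Poincaré--Einstein metrics have vanishing boundary Weyl tensor. By Theorem \ref{thm:0-instanton-obstruction-tensor-conf-invariant}, the $0$-instanton obstruction tensor equals $2\left(\overline{\W}_{0}^{B}-\overline{\W}_{0}^{E}\right)$. This is a conformal invariant: it depends only on the conformal class of $\overline{g}$ along $\partial X$, and the Weyl tensor is conformally covariant. So we may replace $g$ by a metric $g'$ in the same conformal class that is asymptotically Poincaré--Einstein to second order, i.e. $\Ric(g')+3g'=o(x^{2})$ as a section of $S^{2}({^{0}T^{*}X})$. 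It then suffices to show that $\overline{\W}_{0}^{E}=0$ and $\overline{\W}_{0}^{B}=0$ for the compactification $\overline{g}'=x^{2}g'$ built from a geodesic defining function.

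First I use Remark \ref{rem:asymptotically-PE-equals-condition-on-h1-and-h2}. In the geodesic normal form of $g'$ associated to any $h_{0}$, the second-order condition gives $h_{1}=0$ and $h_{2}=-\PP(h_{0})$. Since $h_{1}=-2\II$, the boundary is totally geodesic in $(X,\overline{g}')$, so $\shape_{0}=0$ and $\HH_{0}=0$.

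Second, the magnetic part. The proof of Theorem \ref{thm:0-instanton-obstruction-tensor-conf-invariant} gives $\overline{\W}_{0}^{B}=\star_{0}d_{\omega}(\shape_{0}^{\star}\theta)$ via Proposition \ref{prop:electric-magnetic-Weyl}. This vanishes because $\shape_{0}=0$.

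Third, the electric part. Lemma \ref{lem:computations-in-collar} gives $h_{2}=-\overline{\R}_{0}^{N}$ when $h_{1}=0$, so $\overline{\R}_{0}^{N}=\PP(h_{0})^{\sharp}$. By the same lemma the trace-free part of the obstruction tensor is $\tf\overline{\R}_{0}^{N}-\mathring{\Ric}_{0}$. In dimension $3$ the Schouten tensor is $\PP=\Ric-\frac{s}{4}h_{0}$, so $\tf\PP=\mathring{\Ric}_{0}$. Therefore the obstruction tensor vanishes; this matches point (3) of Theorem \ref{thm:coefficients-of-the-instanton-expansion}, since $\tf h_{2}=-\mathring{\Ric}(h_{0})$. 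Equivalently, $\overline{\W}_{0}^{E}=\tf(\overline{\R}_{0}^{N}-\ldots)$ vanishes, with the Gauss equation accounting for the Ricci terms. Combined with the previous step, both $\overline{\W}_{0}^{E}$ and $\overline{\W}_{0}^{B}$ vanish, and so does the obstruction tensor.

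The main obstacle is the bookkeeping in the third step. I need to confirm that, with $\shape_{0}=0$, the formula $\alpha_{1,1}^{\flat}=\tf\overline{\R}_{0}^{N}-\mathring{\Ric}_{0}$ from the proof of Theorem \ref{thm:coefficients-of-the-instanton-expansion} applies verbatim. I also need the sign conventions of Lemma \ref{lem:computations-in-collar} and Remark \ref{rem:asymptotically-PE-equals-condition-on-h1-and-h2} to line up, so that $h_{2}=-\PP$ indeed matches $\tf h_{2}=-\mathring{\Ric}$ (and not its negative). Routing the argument through point (3) of that theorem avoids recomputing the electric Weyl part directly.
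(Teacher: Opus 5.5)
Your proposal is correct and follows the paper's proof. You reduce by conformal invariance to a metric that is asymptotically Poincar\'e--Einstein to second order, use $h_{1}=0$ (so $\shape_{0}=0$ and $\alpha_{0}=\omega$) to reduce the obstruction tensor to $\tf\overline{\R}_{0}^{N}-\mathring{\Ric}_{0}$, and then conclude from $\overline{\R}_{0}^{N}=-h_{2}=\PP(h_{0})$ together with $\tf\PP(h_{0})=\mathring{\Ric}_{0}$ in dimension $3$; the signs do line up as you hoped.

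Two small remarks. Your separate check that $\overline{\W}_{0}^{B}=\symmtf\curl\shape_{0}=0$ is correct but not needed. Also, the formula $\tf\overline{\R}_{0}^{N}-\mathring{\Ric}_{0}$ comes from the proof of Theorem \ref{thm:coefficients-of-the-instanton-expansion}(3), not from Lemma \ref{lem:computations-in-collar} alone.
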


\begin{proof}
Since the $0$-instanton obstruction tensor is conformally invariant,
we can assume that $g$ is Poincaré--Einstein. Choose $h_{0}\in\mathfrak{c}_{\infty}\left(g\right)$,
let $x$ be a geodesic boundary defining function inducing $h_{0}$,
and write the $0$-instanton obstruction tensor as
\[
\symmtf\star_{0}\left(\star_{2}\theta-f_{\alpha_{0}}\right).
\]
Since $\left(X,g\right)$ is Poincaré--Einstein to first order, the
boundary $\left(\partial X,h_{0}\right)$ is totally geodesic in $\left(X,x^{2}g\right)$,
and therefore $\alpha_{0}=\omega$ and we obtain the simplification
\[
\symmtf\star_{0}\left(\star_{2}\theta-f_{\alpha_{0}}\right)=\tf\left(\overline{\R}_{0}^{N\star}-\Ric_{0}^{\star}\right)\theta.
\]
Since $\left(X,g\right)$ is Poincaré--Einstein to second order,
then $\overline{\R}_{0}^{N}=-h_{2}$ equals the Schouten tensor of
$h_{0}$, and therefore $\tf\overline{\R}_{0}^{N}=\mathring{\Ric}_{0}$,
so the $0$-instanton obstruction tensor vanishes as claimed.
\end{proof}
We conclude with the following regularity result, which reminds of
similar behavior for Poincaré--Einstein metrics (cf. Remark \ref{rem:regularity-PE})
and therefore justifies the name given to the $0$-instanton obstruction
tensor:
\begin{thm}
\label{thm:obstruction-zero-iff-smooth}Let $E$ be an $\SU\left(2\right)$
vector bundle on $X$, and let $A$ be a polyhomogeneous self-dual
$0$-connection on $E$, satisfying the Nahm pole boundary condition.
Then $A$ is gauge equivalent, via a $O\left(1\right)$ polyhomogeneous
gauge transformation $E$, to a $0$-connection log-smooth of order
$2$. Moreover, $A$ is gauge-equivalent to a \emph{smooth} $0$-connection
if and only if the $0$-instanton obstruction tensor vanishes.
\end{thm}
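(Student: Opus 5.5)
We work in the geodesic collar $[0,\varepsilon)\times\partial X$ induced by a fixed $h_{0}\in\mathfrak{c}_{\infty}\left(g\right)$ and choose the \emph{radial gauge}. By Proposition \ref{prop:parallel-transport}, we extend an $\SU\left(2\right)$ frame $s_{1},s_{2}$ of $\mathbb{S}=E_{|\partial X}$ (locally, or globally on the collar since $\SU\left(2\right)$ bundles over $\partial X$ are trivial) to an $O\left(1\right)$ polyhomogeneous frame of $E$ satisfying $A_{x\partial_{x}}s_{i}\equiv0$. Comparing this frame with an arbitrary smooth $\SU\left(2\right)$ frame of $E$ restricting to $s_{1},s_{2}$ at the boundary defines an $O\left(1\right)$ polyhomogeneous gauge transformation $\Phi$ with $\Phi_{|\partial X}=1$. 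We then cut $\Phi$ off to the identity away from the collar; this is possible since $\Phi$ is close to $1$ near $\partial X$ and we may take a logarithm there. In the new gauge, on the collar,
\[
\Phi^{*}A=d+\alpha_{h_{0}}\left(x\right),
\]
where $\alpha_{h_{0}}$ is the geodesic normal family, read as a family of $\mathfrak{su}\left(2\right)$-valued $1$-forms on $\partial X$ with no $dx$ component (Remark \ref{rem:geodesic-normal-family-in-coords}). The regularity of $\Phi^{*}A$ is therefore exactly the regularity of $\alpha_{h_{0}}$ as a section of ${^{0}\Lambda^{1}}\otimes\mathfrak{su}\left(E\right)$.

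By Theorem \ref{thm:index-set-0-instantons}, $\alpha_{h_{0}}$ has an expansion in terms $x^{k}\left(\log x\right)^{l}$ with $k\geq-1$ integer and $l\leq\left(k+1\right)/2$. We first convert this to the $0$-bundle normalization: a $1$-form $x^{k}\left(\log x\right)^{l}\beta$ on $\partial X$ corresponds to the $0$-$1$-form $x^{k+1}\left(\log x\right)^{l}\cdot\left(\beta/x\right)$. The exponent in $x$ is thus shifted by one, and the first log appears at order $x^{2}\log x$ as a $0$-form. The leading part $\theta x^{-1}+\alpha_{0}$ is a smooth $0$-connection, namely $d$ plus a smooth $0$-$1$-form. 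We then check against Definition \ref{def:log-smoothness} that a log power $l\leq\left(k+1\right)/2$ at $0$-order $j=k+1$ satisfies the log-smooth-of-order-$2$ bound, which is at most $j/2$ logs at order $j$; this should be exactly the pattern $x^{2k}\left(\log x\right)^{l}$, $l\leq k$, and its odd interleavings. This bookkeeping of indices against the definition is the step most likely to need care: one must verify that the relevant definition allows $l\le\lfloor j/2\rfloor$ rather than requiring the logs to sit only at multiples of $2$. Remark \ref{rem:regularity-PE} (order $n$ with $\left(\log x\right)^{l}$, $l\leq k$, at $x^{nk+i}$) indicates that it does.

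For the second statement we argue in both directions. If the obstruction tensor vanishes, then $\alpha_{1,1}=0$, since by Theorem \ref{thm:coefficients-of-the-instanton-expansion} $\alpha_{1,1}$ equals the obstruction tensor, and this holds for every $h_{0}$ by Theorem \ref{thm:0-instanton-obstruction-tensor-conf-invariant}. Theorem \ref{thm:index-set-0-instantons} then shows the expansion contains only integer powers, so $\alpha_{h_{0}}$, and hence $\Phi^{*}A$, is smooth. Conversely, suppose $A=\Psi^{*}B$ with $B$ smooth and $\Psi$ polyhomogeneous. Then $B$ is again self-dual with a Nahm pole. Since $B$ is smooth, Proposition \ref{prop:parallel-transport} makes its radial-gauge frame smooth, so its geodesic normal family contains only integer powers (last sentence of Remark \ref{rem:geodesic-normal-family-in-coords}). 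Its $\alpha_{1,1}$ therefore vanishes, and since $\alpha_{1,1}$ equals the obstruction tensor, which is independent of the connection, the obstruction tensor vanishes. No gauge-invariance argument is actually needed here, because the obstruction tensor is manifestly independent of $A$. The only subtle point left is justifying that the cut-off gauge transformation $\Phi$ preserves polyhomogeneity and is globally defined. We handle this by working on the trivialized collar and using that $\Phi\to1$ at $\partial X$.
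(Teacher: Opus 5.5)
Your proposal is correct and follows essentially the paper's argument: pass to the radial gauge given by Proposition \ref{prop:parallel-transport}, read off log-smoothness of order $2$ from the index set in Theorem \ref{thm:index-set-0-instantons}, and identify smoothness with the vanishing of $\alpha_{1,1}$. Your worry about the bookkeeping is settled by the index-set axiom: the index set generated by the pairs $(2l,l)$ is exactly $\{(j,l): j\geq 2l\}$. The paper leaves the globalization of the gauge transformation implicit, and your cut-off handles it. For the converse, you apply the expansion theorem to the smooth connection $B$ and use that the obstruction tensor depends only on the metric, where the paper instead invokes gauge invariance of the geodesic normal family; your version is slightly cleaner, since Proposition \ref{prop:geodesic-normal-family-is-gauge-invariant} only covers gauge transformations equal to the identity on $\partial X$.
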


\begin{proof}
By Theorem \ref{thm:index-set-0-instantons}, the expansion of the
geodesic normal family $\alpha\left(x\right)$ of $A$ contains only
terms $x^{k}\left(\log x\right)^{l}$ with $k\in\mathbb{Z}$, $k\geq-1$,
and $l\leq\left(k+1\right)/2$. This is exactly the same thing as
saying that the $0$-$1$-form representing $A$, with respect to
a frame $s_{1},s_{2}$ of $E$ which satisfies $A_{x\partial_{x}}s_{i}\equiv0$
near the boundary, is log-smooth of order $2$. By Proposition \ref{prop:parallel-transport},
the frame $s_{1},s_{2}$ is $O\left(1\right)$ polyhomogeneous. Therefore,
the gauge transformation $\Phi$ to this frame from any smooth frame
of $E$, is $O\left(1\right)$ polyhomogeneous. Furthermore, again
by Theorem \ref{thm:index-set-0-instantons}, the expansion of $\alpha\left(x\right)$
does not contain log terms if and only if the $0$-instanton obstruction
tensor vanishes; since the geodesic normal family is gauge invariant,
this is equivalent to the fact that $A$ is gauge equivalent to a
smooth $0$-connection, in the sense above.
\end{proof}
\begin{rem}
One can prove a result similar to the main theorem of \cite{ChruscielDelayLeeSkinnerBoundaryRegularity}:
roughly speaking, if $A$ is sufficiently regular to be able to define
a \emph{partial} asymptotic expansion $\alpha\left(x\right)\sim\alpha_{-1}x^{-1}+\alpha_{0}+\alpha_{1,1}x\log x+\alpha_{1}x+o\left(x\right)$
of the associated geodesic normal family, then $A$ is log-smooth
of order $2$ modulo gauge, \emph{provided that $\symmtf\alpha_{1}$
is smooth}. This regularity result will be proved in \cite{UsulaNahmPolesII}.
\end{rem}

\subsection{The boundary value of a $0$-instanton}

From Theorem \ref{thm:coefficients-of-the-instanton-expansion}, if
$A$ is a self-dual $0$-connection satisfying the Nahm pole boundary
condition, then the expansion
\[
\alpha\left(x\right)\sim\theta x^{-1}+\alpha_{0}+\alpha_{1,1}x\log x+\alpha_{1}x+o\left(x\right)
\]
of the geodesic normal family of $A$ associated to a choice of $h_{0}\in\mathfrak{c}_{\infty}\left(g\right)$
contains a formally free term, namely the symmetric trace-free part
of $\alpha_{1}\in T^{*}\partial X\otimes\mathfrak{su}\left(\mathbb{S}\right)\simeq S^{2}\left(T^{*}\partial X\right)$.
This term is of fundamental importance in the study of the self-duality
equation for $0$-connections, and therefore we give it a name:
\begin{defn}
Denote by $\left[A\right]$ the gauge class of $A$ modulo polyhomogeneous
gauge transformations equal to the identity at $\partial X$. We call
$\beta_{h_{0}}\left(\left[A\right]\right):=\symmtf\alpha_{1}$ the
\emph{boundary value} of $\left[A\right]$ associated to $h_{0}\in\mathfrak{c}_{\infty}\left(g\right)$.
\end{defn}

\begin{rem}
By Proposition \ref{prop:geodesic-normal-family-is-gauge-invariant},
the geodesic normal family associated to $h_{0}\in\mathfrak{c}_{\infty}\left(g\right)$
is a gauge invariant, so the definition of $\beta_{h_{0}}\left(\left[A\right]\right)$
is well-posed.
\end{rem}

In this subsection, we compute the boundary value of the most important
class of examples considered here:
\begin{thm}
\label{thm:boundary-value-LC-spin}Let $\left(X^{4},g\right)$ be
a Poincaré--Einstein manifold, and let $\left(\mathbb{S}_{\pm},{^{0}\cl}\right)$
be a $0$-spin structure on $\left(X^{4},g\right)$. Denote by $\nabla^{+}$
the Levi-Civita spin $0$-connection on $\mathbb{S}_{+}$. Finally,
let $h_{0}\in\mathfrak{c}_{\infty}\left(g\right)$. Then the boundary
value of $\left[\nabla^{+}\right]$ is
\[
\beta_{h_{0}}\left(\left[\nabla^{+}\right]\right)=\frac{1}{2}\mathring{\Ric}\left(h_{0}\right)^{\sharp}.
\]
\end{thm}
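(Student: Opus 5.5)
We compute the geodesic normal family of $\nabla^{+}$ directly from the geometry of the geodesic collar, and then read off the coefficient of $x$. Fix $h_{0}\in\mathfrak{c}_{\infty}\left(g\right)$ and the associated geodesic normal form $g=x^{-2}\left(dx^{2}+h\left(x\right)\right)$. Since $g$ is Poincaré--Einstein in dimension $4$, the Fefferman--Graham expansion gives $h\left(x\right)=h_{0}-\PP\left(h_{0}\right)x^{2}+h_{3}x^{3}+\cdots$, with $h_{1}=0$. We choose a coframe adapted to the collar: $e^{0}=-dx/x$ and $e^{i}=\vartheta^{i}\left(x\right)/x$. Here $\vartheta^{i}\left(x\right)$ is an $h\left(x\right)$-orthonormal coframe of $\partial X$, obtained from an $h_{0}$-orthonormal coframe $\theta^{i}$ by applying the symmetric square root of $h_{0}^{-1}h\left(x\right)$, so that $\vartheta^{i}\left(x\right)=\theta^{i}-\frac{1}{2}x^{2}\PP_{0}{}^{i}{}_{j}\theta^{j}+O\left(x^{3}\right)$. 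We lift it to a spin frame of $\mathbb{S}_{+}$ using (\ref{eq:clifford-dim4}).

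\textbf{Step 1: the connection in this frame.} By the standard formula $\nabla^{+}=d+\frac{1}{2}\left(\omega_{0i}+\frac{1}{2}\varepsilon_{ijk}\omega_{jk}\right)\otimes\frac{\sigma_{i}}{2}$ (self-dual projection of $\mathfrak{so}\left(4\right)$ onto $\mathfrak{su}\left(2\right)_{+}$, with signs normalized so that the hyperbolic case reproduces $d+e^{i}\otimes\sigma_{i}/2$), it suffices to compute the Levi-Civita $0$-connection forms $\omega_{ab}$ of $g$ in this coframe. Solving $de+\omega\wedge e=0$ as in Example \ref{exa:LC-hyperbolic} gives the following. The components $\omega_{jk}$ are the Levi-Civita connection forms of $h\left(x\right)$ in the coframe $\vartheta\left(x\right)$, with no $dx$-component, because the coframe is chosen symmetric (this is why the symmetric square root is used). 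The components $\omega_{0i}$ equal $e^{i}$ plus the second fundamental form of the level sets, namely $-\frac{1}{2}x^{-1}\left(\partial_{x}h\right)$-type terms contracted with $\vartheta$, that is, $x\cdot\frac{1}{2}\left(h_{0}^{-1}\partial_{x}h\right)\vartheta/x$. This gives $\omega_{0i}=x^{-1}\vartheta^{i}-\PP_{0}{}^{i}{}_{j}\theta^{j}x+O\left(x^{2}\right)$.

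\textbf{Step 2: the gauge is already geodesic.} The $dx$-component of the connection must vanish, so that the frame is parallel along $\partial_{x}$ and the coefficients equal the geodesic normal family $\alpha_{h_{0}}\left(x\right)$ by Remark \ref{rem:geodesic-normal-family-in-coords}. This holds because the symmetric choice of $\vartheta\left(x\right)$ makes the frame parallel in the normal direction for $\overline{g}$. Since $x^{2}g$ and $g$ differ conformally by a function of $x$ alone, the extra terms are only of $e^{0}\wedge e^{i}$-type, and these do not produce $dx$-components in $\omega_{jk}$. If a residual $dx$-component appears, we remove it by a further gauge equal to the identity at $\partial X$, which is allowed by Proposition \ref{prop:geodesic-normal-family-is-gauge-invariant}.

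\textbf{Step 3: reading off the boundary value.} Expanding gives
\[
\alpha_{h_{0}}\left(x\right)=\theta x^{-1}+\omega+x\left(-\tfrac{1}{2}\PP_{0}\theta-\tfrac{1}{2}\PP_{0}\theta\right)/2\cdot(\pm)+O\left(x^{2}\right).
\]
The $x^{-1}\vartheta$ term contributes $-\frac{1}{2}\PP_{0}\theta\,x$, and the second fundamental form term contributes another multiple of $\PP_{0}\theta\,x$. The $\omega\left(h\left(x\right)\right)$ term contributes nothing at order $x$, since $h_{1}=0$. After identifying via $\theta$, the $x$-coefficient $\alpha_{1}$ is a constant multiple of $\PP_{0}^{\sharp}$. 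Its trace-free symmetric part is that multiple of $\mathring{\PP}_{0}=\mathring{\Ric}_{0}$, because in dimension $3$ we have $\PP=\Ric-\frac{\s}{4}h$. Consistency with Theorem \ref{thm:coefficients-of-the-instanton-expansion}(2) fixes the skew and trace parts and provides a check on signs; the claimed value is $\frac{1}{2}\mathring{\Ric}\left(h_{0}\right)^{\sharp}$.

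The main obstacle is the sign and normalization bookkeeping in Steps 1 and 3: the orientation convention $e^{0}=-dx/x$, the identification of $\mathfrak{su}\left(2\right)_{+}$ with $\sigma_{i}/2$, and the factor in $\symmtf$ under the isomorphism $T^{*}\partial X\otimes\mathfrak{su}\left(\mathbb{S}\right)\simeq\mathfrak{gl}\left(T\partial X\right)$. I would first recheck these against the hyperbolic model, where $\PP=0$, and then against the trace and skew formulas of Theorem \ref{thm:coefficients-of-the-instanton-expansion}.
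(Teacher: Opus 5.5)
Your route differs from the paper's. You compute the Levi-Civita forms of $g$ in an explicit coframe via the Gauss--Weingarten split and then project to $\mathfrak{su}(2)_{+}$. The paper avoids both the $\mathfrak{so}(4)$ forms and the projection. It works on ${}^{0}\Lambda^{+}$ with the coframe defined by $\nabla_{e_{0}}e^{i}=0$, proves $b=\theta-\frac{1}{2}\rho x^{2}+O(x^{3})$, and isolates the $x^{-1}dx$-part of $d\omega+[\alpha\wedge\omega]=0$ to get $[\alpha_{1}\wedge\theta]=\frac{1}{2}[\rho\wedge\theta]$. Your route can work, but as written it has a genuine gap. The decisive point, the constant $c$ in $\alpha_{1}=c\,\PP_{0}^{\sharp}$, is never derived: Step 3 leaves a $(\pm)$ and ``a constant multiple'' and then simply states the answer. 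The one sign you do commit to in Step 1 is wrong. With the paper's conventions, $\shape=-\frac{1}{2}h^{-1}\partial_{x}h=x\PP_{0}^{\sharp}+O(x^{2})$. The Koszul formula gives $\nabla_{V}e_{0}=V+x\shape V$ for $V$ tangent to the level sets. Writing $\nabla e_{0}=\omega^{i}{}_{0}\otimes e_{i}$, one therefore has tangentially
\[
\omega^{i}{}_{0}=x^{-1}\vartheta^{i}+\shape^{\star}\vartheta^{i}=x^{-1}\theta^{i}+\left(-\frac{1}{2}+1\right)x\,\PP_{0}^{\star}\theta^{i}+O(x^{2}),
\]
so $\alpha_{1}=\frac{1}{2}\PP_{0}^{\sharp}$ and $\symmtf\alpha_{1}=\frac{1}{2}\mathring{\PP}_{0}^{\sharp}=\frac{1}{2}\mathring{\Ric}(h_{0})^{\sharp}$. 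Your sign would instead give $\alpha_{1}=-\frac{3}{2}\PP_{0}^{\sharp}$, with trace $-\frac{3}{8}\s_{0}$. This contradicts $\tr\alpha_{1}=\frac{1}{8}\s_{0}$ from Theorem \ref{thm:coefficients-of-the-instanton-expansion}(2). The half-space check you propose cannot detect $c$, since there $\PP_{0}=0$ and $\shape=0$. Separately, the prefactor $\frac{1}{2}$ in your projection formula would halve the hyperbolic residue. The check that does pin down $c$ is that trace identity: $c$ is universal, so any example with $\s_{0}\neq0$ forces $c=\frac{1}{2}$. But you have to actually carry this out.

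Step 2 is also wrongly justified. The square-root coframe is not parallel. With $S=(h_{0}^{-1}h)^{1/2}$ and $\vartheta^{i}=\theta^{i}\circ S$, Lemma \ref{lem:computations-in-collar}(2) gives $\overline{\nabla}_{\partial_{x}}\vartheta^{i}=\frac{1}{2}\theta^{i}\circ S^{-1}[S,\partial_{x}S]$. This vanishes only if the endomorphisms $h_{0}^{-1}h(x)$ commute. Here it is $O(x^{4})$, with leading term involving $[\PP_{0}^{\sharp},h_{3}^{\sharp}]$, but generically nonzero. The fallback through Proposition \ref{prop:geodesic-normal-family-is-gauge-invariant} does not rescue this. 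That proposition says the geodesic normal family is unchanged by a gauge equal to $1$ on $\partial X$; it does not say that the connection form in a non-parallel frame equals that family. Such a gauge, for example $\exp(x\phi)$, shifts your $x$-coefficient by $d_{\alpha_{0}}\phi+\frac{1}{2}[[\theta,\phi],\phi]$, whose symmetric trace-free part is generally nonzero. What you actually need is that your frame differs from a parallel one by $1+O(x^{3})$. This is true here: $\vartheta$ and the paper's parallel coframe $b$ are both $h(x)$-orthonormal and both equal $\theta-\frac{1}{2}x^{2}\PP_{0}^{\star}\theta+O(x^{3})$. A transition of that form changes the tangential coefficients only at order $x^{2}$. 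Once these two points are fixed, your route has a real merit: it shows $\alpha_{1}$ transparently as the coframe deformation $-\frac{1}{2}\PP_{0}$ plus the second fundamental form $+\PP_{0}$, whereas the paper folds both into a single torsion equation.
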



The remaining part of the subsection is devoted to the proof of the
result above. To simplify the computations slightly, we will actually
compute the first terms of the expansion of the geodesic normal family
of the Levi-Civita $0$-connection on $^{0}\Lambda^{+}$. These two
$0$-connections are completely equivalent: indeed, the chosen $0$-spin
structure $\left(\mathbb{S}_{\pm},{^{0}\cl}\right)$ determines a
natural 1-1 correspondence between $\SU\left(2\right)$ $0$-connections
on $\mathbb{S}_{+}$ and $\SO\left(3\right)$ $0$-connections on
$^{0}\Lambda^{+}$, under which the Levi-Civita $0$-connection on
$^{0}\Lambda^{+}$ corresponds to the Levi-Civita spin $0$-connection
on $\mathbb{S}_{+}$.

Choose $h_{0}\in\mathfrak{c}_{\infty}\left(g\right)$, and write the
metric in geodesic normal form as
\[
g=\frac{dx^{2}+h\left(x\right)}{x^{2}}.
\]
Since $g$ is Poincaré--Einstein, we have 
\[
h\left(x\right)\sim h_{0}-\PP\left(h_{0}\right)x^{2}+O\left(x^{3}\right),
\]
where $\PP\left(h_{0}\right)$ is the Schouten tensor of $h_{0}$
(cf. Remark \ref{rem:asymptotically-PE-equals-condition-on-h1-and-h2}).
We define as before $e^{0}=-dx/x$, and we denote by $e_{0}=-x\partial_{x}$
the $g$ metric dual of $e^{0}$. We now choose an $\SO\left(3\right)$
coframe $\theta^{1},\theta^{2},\theta^{3}$ for $\left(\partial X,h_{0}\right)$,
and we denote by $e^{i}$ the unique $0$-$1$-forms on the collar
such that
\begin{align*}
\nabla_{e_{0}}e^{i} & =0\\
\left(xe^{i}\right)_{|x=0} & =\theta^{i}.
\end{align*}
Note that $\nabla_{e_{0}}e^{0}=0$ as well. Indeed, since $\nabla$
is a torsion-free $0$-connection, we have
\[
d\omega\left(V,W\right)=W\lrcorner\left(\nabla_{V}\omega\right)-V\lrcorner\left(\nabla_{W}\omega\right)
\]
for every $V,W\in\mathcal{V}_{0}\left(X\right)$ and every $0$-$1$-form
$\omega$. Therefore, from $de^{0}=0$, we have
\[
V\lrcorner\left(\nabla_{e_{0}}e^{0}\right)=e_{0}\lrcorner\left(\nabla_{V}e^{0}\right)
\]
and, since $\left|e^{0}\right|\equiv1$ on the collar, we have
\begin{align*}
0 & =V\left|e^{0}\right|^{2}\\
 & =2\left\langle \nabla_{V}e^{0},e^{0}\right\rangle \\
 & =2e_{0}\lrcorner\left(\nabla_{V}e^{0}\right).
\end{align*}
From the generality of $V$, we have indeed $\nabla_{e_{0}}e^{0}=0$.
From this property, and the fact that $\left(e^{0},e^{1},e^{2},e^{3}\right)$
is an $\SO\left(4\right)$ coframe of $X$ along $\partial X$ by
construction, it follows that $\left(e^{0},e^{1},e^{2},e^{3}\right)$
is an $\SO\left(4\right)$ coframe on the whole collar; in particular,
the $e^{i}$ with $i\geq1$ are orthogonal to $e^{0}$, and therefore
we can write them as $e^{i}=x^{-1}b^{i}\left(x\right)$ where $b^{1}\left(x\right),b^{2}\left(x\right),b^{3}\left(x\right)$
is an $\SO\left(3\right)$ coframe of $\left(\partial X,h\left(x\right)\right)$
with $b^{i}\left(0\right)=\theta^{i}$.

Define now
\begin{align*}
\omega^{1} & =e^{01}+e^{23}\\
\omega^{2} & =e^{02}+e^{31}\\
\omega^{3} & =e^{03}+e^{12}.
\end{align*}
The triple $\omega^{1},\omega^{2},\omega^{3}$ is a positively oriented
orthogonal frame for $^{0}\Lambda^{+}$, with elements of constant
length $\sqrt{2}$. As such, the Levi-Civita $0$-connection on $^{0}\Lambda^{+}$
can be written in terms of this frame as $\nabla^{+}=d+\alpha$, where
\[
\alpha=\alpha^{i}\otimes\mathfrak{a}_{i}=\left(\begin{matrix} & -\alpha^{3} & \alpha^{2}\\
\alpha^{3} &  & -\alpha^{1}\\
-\alpha^{2} & \alpha^{1}
\end{matrix}\right)
\]
and $\mathfrak{a}_{1},\mathfrak{a}_{2},\mathfrak{a}_{3}$ is the basis
of $\mathfrak{so}\left(3\right)$ chosen in §\ref{subsec:Nahm-poles-and-self-duality}.
Since $\nabla_{e_{0}}e^{0}=\nabla_{e_{0}}e^{i}\equiv0$, we have $\nabla_{e_{0}}^{+}\omega^{i}\equiv0$
as well, and this implies that $\alpha^{i}=\alpha^{i}\left(x\right)$
are $O\left(x^{-1}\right)$ families of $1$-forms on $\partial X$.
Theorem \ref{thm:coefficients-of-the-instanton-expansion} then implies
that
\[
\alpha^{i}\left(x\right)\sim\theta^{i}x^{-1}+\alpha_{0}^{i}+\alpha_{1}^{i}x+O\left(x^{2}\right),
\]
where $\alpha_{0}=\alpha_{0}^{i}\otimes\mathfrak{a}_{i}$ is the local
expression in terms of the coframe $\theta^{1},\theta^{2},\theta^{3}$
of the Levi-Civita connection on $T^{*}\partial X$, i.e.
\begin{align*}
d\theta^{1}+\alpha_{0}^{2}\land\theta^{3}-\alpha_{0}^{3}\land\theta^{2} & =0\\
d\theta^{2}+\alpha_{0}^{3}\land\theta^{1}-\alpha_{0}^{1}\land\theta^{3} & =0\\
d\theta^{3}+\alpha_{0}^{1}\land\theta^{2}-\alpha_{0}^{2}\land\theta^{1} & =0.
\end{align*}
Our aim is to compute the $1$-forms $\alpha_{1}^{i}$. In order to
do that, we use the fact that $\nabla^{+}$ is uniquely characterized
as the unique torsion-free $\SO\left(3\right)$ $0$-connection on
$^{0}\Lambda^{+}$ (cf. Proposition 2.3 of \cite{FineGaugeTheoretic}).
In terms of the frame $\omega^{1},\omega^{2},\omega^{3}$, this condition
is
\begin{align}
d\omega^{1} & =\alpha^{3}\land\omega^{2}-\alpha^{2}\land\omega^{3}\nonumber \\
d\omega^{2} & =\alpha^{1}\land\omega^{3}-\alpha^{3}\land\omega^{1}\nonumber \\
d\omega^{3} & =\alpha^{2}\land\omega^{1}-\alpha^{1}\land\omega^{2}.\label{eq:torsion-free-lambda+-1}
\end{align}
These equations can be expanded in powers of $x$, with a leading
term of order $x^{-3}$. Isolating the $x^{-1}$ coefficients, we
obtain the desired condition for $\alpha_{1}=\alpha_{1}^{i}\otimes\mathfrak{a}_{i}$.

To simplify the notation, define $\theta=\theta^{i}\otimes\mathfrak{a}_{i}$
and $\omega=\omega^{i}\otimes\mathfrak{a}_{i}$. Then the torsion-free
equation above can be written compactly as
\[
d\omega+\left[\alpha\land\omega\right]=0.
\]
The next step is to compute the expansion of $\omega$ as $x\to0$.
Denote by $\rho$ the $\mathfrak{so}\left(3\right)$-valued $1$-form
$\PP\left(h_{0}\right)^{\sharp\star}\theta$, where $\PP\left(h_{0}\right)^{\star}$
is the self-adjoint endomorphism of $T^{*}\partial X$ induced by
the Schouten tensor $\PP\left(h_{0}\right)^{\sharp}$ with an index
raised.
\begin{lem}
We have
\[
b\left(x\right)\sim\theta-\frac{1}{2}\rho x^{2}+O\left(x^{3}\right).
\]
\end{lem}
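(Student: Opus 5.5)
Write $b(x)=b^{i}(x)\otimes\mathfrak{a}_{i}$. The plan is to turn the defining condition $\nabla_{e_{0}}e^{i}=0$ into an ODE in $x$ for the coframe $b(x)$ of $(\partial X,h(x))$, and then read off its Taylor coefficients from those of $h(x)\sim h_{0}-\PP(h_{0})x^{2}+O(x^{3})$.

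First, express the parallel-transport condition. With $\overline{g}=dx^{2}+h(x)$, the Levi-Civita connections of $g$ and $\overline{g}$ are related by the conformal change formula. For the conformal factor $x^{-2}$, the $0$-$1$-forms $e^{i}=x^{-1}b^{i}$ satisfy $\nabla_{x\partial_{x}}e^{i}=x^{-1}\,x\overline{\nabla}_{\partial_{x}}b^{i}$, up to terms proportional to $dx$ and to $b^{i}(\partial_{x})=0$. In other words, $\nabla_{e_{0}}e^{i}=0$ together with the orthogonality $e^{i}\perp e^{0}$ (already established) should be equivalent to $\overline{\nabla}_{\partial_{x}}b^{i}=0$ along the geodesics $x\mapsto(x,y)$ of $\overline{g}$. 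So $b^{i}$ is $\overline{g}$-parallel along the normal geodesics, and I will verify this reduction by the standard computation $\nabla=\overline{\nabla}-x^{-1}(dx\otimes\cdot+\cdot\otimes dx-\overline{g}\,\mathrm{grad}\,x)$.

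Second, compute $\overline{\nabla}_{\partial_{x}}$ on $1$-forms tangent to the level sets. Viewing $b^{i}(x)$ as an $x$-family of $1$-forms on $\partial X$, one gets $\overline{\nabla}_{\partial_{x}}b^{i}=\partial_{x}b^{i}+\shape^{\star}b^{i}$, up to sign conventions, where the shape operator satisfies $\shape=\pm\frac{1}{2}h^{-1}\partial_{x}h$ (cf.\ Lemma \ref{lem:computations-in-collar}). So the ODE reads
\[
\partial_{x}b=-\tfrac{1}{2}\left(h^{-1}\partial_{x}h\right)^{\star}b .
\]
This is the natural ODE for an orthonormal coframe of $h(x)$: if $h=h_{0}(M\cdot,M\cdot)$, then $b=M^{\star}\theta$ with $M$ the symmetric square root. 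Note that $b^{i}(x)$ is $h(x)$-orthonormal, so a consistency check is $h(x)=\sum_{i}b^{i}(x)^{2}$.

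Third, expand. Since $h_{1}=0$ and $h_{2}=-\PP(h_{0})$, we have $h^{-1}\partial_{x}h=-2x\,\PP(h_{0})^{\sharp}+O(x^{2})$. Integrating from $b(0)=\theta$ gives $b(x)=\theta+\frac{1}{2}x^{2}(\PP^{\sharp})^{\star}\theta\cdot(\pm1)+O(x^{3})$. Fixing the sign via the check $\sum_{i}(b^{i})^{2}=h_{0}-x^{2}\PP+O(x^{3})$ forces $b=\theta-\frac{1}{2}x^{2}\PP^{\sharp\star}\theta+O(x^{3})=\theta-\frac{1}{2}\rho x^{2}+O(x^{3})$. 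The absence of an $x$ term follows from $h_{1}=0$.

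The main obstacle is the first step. I need to check carefully that the $0$-covariant derivative condition gives exactly the $\overline{g}$-parallel (symmetric square root) gauge rather than some other rotated coframe. A possible residual skew $\mathfrak{so}(3)$ rotation at order $x^{2}$ would be invisible in $h$, so it must be excluded by the ODE itself: its right-hand side is symmetric, so no skew part is generated. I would confirm this sign and symmetry bookkeeping directly against Lemma \ref{lem:computations-in-collar}.
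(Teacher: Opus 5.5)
Your argument is essentially the paper's: the conformal change formula turns $\nabla_{e_0}e^i=0$ into $\overline{\nabla}_{\partial_x}b^i=0$, Lemma \ref{lem:computations-in-collar}(2) gives $\partial_x b=-\shape^{\star}b$, and then you only need $\shape$ to first order. The paper gets $\shape$ from the Riccati equation and $\overline{\R}^{N}_{0}=-h_2=\PP(h_0)$; you get it from $\shape=-\frac{1}{2}h^{-1}\partial_x h$, which is an equivalent route. One correction: your displayed ODE has the wrong sign. Point (1) of that lemma gives $\shape=-\frac{1}{2}h^{-1}\partial_x h$, hence $\partial_x b=+\frac{1}{2}(h^{-1}\partial_x h)^{\star}b=-x\rho+O(x^{2})$, which yields $b=\theta-\frac{1}{2}\rho x^{2}+O(x^{3})$ directly; as written, your ODE would produce $+\frac{1}{2}\rho x^{2}$, and only the (valid) orthonormality check $\sum_i b^i\otimes b^i=h(x)$ overrides it.
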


\begin{proof}
Denote by $\overline{\nabla}$ the Levi-Civita connection of $\overline{g}=dx^{2}+h\left(x\right)$
on $1$-forms. Then $\overline{\nabla}$ is related to $\nabla$ by
the formula
\[
\overline{\nabla}_{V}\omega=\nabla_{V}\omega-\left(V\log x\right)\omega-\omega\left(V\right)d\log x+\left\langle \omega,d\log x\right\rangle _{g}V^{\flat}.
\]
Substitute $V=e_{0}$ and $\omega=e^{i}$: we obtain
\[
\overline{\nabla}_{e_{0}}e^{i}=\nabla_{e_{0}}e^{i}-\left(e_{0}\log x\right)e^{i}-e^{i}\left(e_{0}\right)d\log x+\left\langle e^{i},d\log x\right\rangle _{g}e^{0}.
\]
Note that $d\log x=x^{-1}dx=-e^{0}$. Since $\left\langle e^{0},e^{i}\right\rangle _{g}\equiv0$,
we have $e^{i}\left(e_{0}\right)=\left\langle e^{i},d\log x\right\rangle _{g}=0$.
Moreover, $e_{0}\log x=-x\partial_{x}\left(\log x\right)=-1$. Therefore,
from $\nabla_{e_{0}}e^{i}\equiv0$, we obtain
\[
-x\overline{\nabla}_{\partial_{x}}\left(x^{-1}b^{i}\right)=x^{-1}b^{i}.
\]
On the other hand, $\overline{\nabla}_{\partial_{x}}\left(x^{-1}b^{i}\right)=-x^{-2}b^{i}+x^{-1}\overline{\nabla}_{\partial_{x}}b^{i}$.
Therefore we get
\[
\overline{\nabla}_{\partial_{x}}b^{i}=0.
\]
Now, from Lemma \ref{lem:computations-in-collar}, we have $\dot{b}^{i}=\overline{\nabla}_{\partial_{x}}b^{i}-\shape^{\star}b^{i}$.
Evaluating at $x=0$ and using the fact that $\left(\partial X,h_{0}\right)$
is totally geodesic in $\left(X,x^{2}g\right)$, we obtain $\dot{b}^{i}\left(0\right)=0$,
so the coefficient of $x$ in the expansion of $b^{i}\left(x\right)$
is $0$. Concerning the coefficient of $x^{2}$, again from Lemma
\ref{lem:computations-in-collar} we have
\begin{align*}
\ddot{b}^{i} & =\overline{\nabla}_{\partial_{x}}\dot{b}^{i}-\shape^{\star}\dot{b}^{i}\\
 & =\overline{\nabla}_{\partial_{x}}\left(\overline{\nabla}_{\partial_{x}}b^{i}-\shape^{\star}b^{i}\right)-\shape^{\star}\left(\overline{\nabla}_{\partial_{x}}b^{i}-\shape^{\star}b^{i}\right).
\end{align*}
Evaluating again at $x=0$, we obtain $\ddot{b}^{i}\left(0\right)=-\left(\overline{\nabla}_{\partial_{x}}\left(\shape^{\star}b^{i}\right)\right)_{|x=0}.$
Now, we have
\begin{align*}
\overline{\nabla}_{\partial_{x}}\left(\shape^{\star}b^{i}\right) & =\left(\overline{\nabla}_{\partial_{x}}\shape^{\star}\right)b^{i}+\shape^{\star}\overline{\nabla}_{\partial_{x}}b^{i}\\
 & =\left(\overline{\nabla}_{\partial_{x}}\shape^{\star}\right)b^{i}\\
 & =\left(\partial_{x}\shape^{\star}\right)b^{i}\\
 & =\left(\left(\shape^{\star}\right)^{2}+\overline{\R}^{N\star}\right)b^{i}.
\end{align*}
Now, by Lemma \ref{lem:computations-in-collar} and using the fact
that $h_{1}=0$, we have $\overline{\R}_{0}^{N}=-h_{2}$; the Poincaré--Einstein
condition then implies $\overline{\R}_{0}^{N}=\PP\left(h_{0}\right)$.
Therefore, evaluating at $x=0$, we obtain
\[
\left(\overline{\nabla}_{\partial_{x}}\left(\shape^{\star}b^{i}\right)\right)_{|x=0}=\PP\left(h_{0}\right)^{\star}\theta^{i}=\rho^{i}.
\]
It follows that $\ddot{b}^{i}\left(0\right)=-\rho^{i}$. Since $b^{i}\left(x\right)\sim b^{i}\left(0\right)+\frac{1}{2}x^{2}\ddot{b}^{i}\left(0\right)+O\left(x^{3}\right)$,
we obtain the claim.
\end{proof}
\begin{cor}
The expansion of $\omega\left(x\right)$ is
\begin{align*}
\omega\left(x\right) & \sim\left(-dx\land\theta+\frac{1}{2}\left[\theta\land\theta\right]\right)x^{-2}\\
 & +\left(\frac{1}{2}dx\land\rho-\frac{1}{2}\left[\theta\land\rho\right]\right)\\
 & +O\left(x\right).
\end{align*}
\end{cor}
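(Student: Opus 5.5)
The plan is a direct substitution. By definition $\omega^{i}=e^{0}\land e^{i}+\frac{1}{2}\varepsilon^{i}{}_{jk}e^{j}\land e^{k}$, with $e^{0}=-dx/x$ and $e^{i}=x^{-1}b^{i}(x)$. In the compact notation $\omega=\omega^{i}\otimes\mathfrak{a}_{i}$ and $b=b^{i}\otimes\mathfrak{a}_{i}$, I first check the two conventions:
\[
\sum_{i}\left(\theta^{j}\land\theta^{k}\text{ cyclic}\right)\otimes\mathfrak{a}_{i}=\tfrac{1}{2}\left[\theta\land\theta\right],
\qquad
\left(e^{j}\land e^{k}\text{ cyclic}\right)\otimes\mathfrak{a}_{i}=\tfrac{1}{2}x^{-2}\left[b\land b\right].
\]
Both follow from $[\mathfrak{a}_{j},\mathfrak{a}_{k}]=\varepsilon_{jk}{}^{i}\mathfrak{a}_{i}$, exactly as in the proof of Proposition \ref{prop:nahm-residues}.

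With these conventions the frame becomes
\[
\omega=-x^{-2}\,dx\land b+\tfrac{1}{2}x^{-2}\left[b\land b\right].
\]
This is an exact identity in the collar, because $b(x)$ is a family of forms on $\partial X$ with no $dx$ component.

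Next I insert the expansion from the preceding lemma, $b(x)\sim\theta-\frac{1}{2}\rho x^{2}+O(x^{3})$. Since $b$ has no $x^{1}$ term, there is no $x^{-1}$ term in $\omega$, which matches the claimed form.

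The $x^{-2}$ coefficient is $-dx\land\theta+\frac{1}{2}[\theta\land\theta]$. For the $x^{0}$ coefficient, the first summand contributes $-dx\land(-\frac{1}{2}\rho)=\frac{1}{2}dx\land\rho$. For the quadratic summand I use the symmetry $[\theta\land\rho]=[\rho\land\theta]$, which holds for $\mathfrak{g}$-valued $1$-forms. It gives
\[
\tfrac{1}{2}\left[b\land b\right]=\tfrac{1}{2}\left[\theta\land\theta\right]-\tfrac{1}{2}\left[\theta\land\rho\right]x^{2}+O(x^{3}).
\]
Multiplying by $x^{-2}$ yields the constant term $-\frac{1}{2}[\theta\land\rho]$. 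The remainders are $O(x^{3})\cdot x^{-2}=O(x)$, which gives the stated expansion.

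There is no real obstacle here; the only care needed is the sign and normalisation bookkeeping in the two conventions displayed above. This includes the orientation convention $e^{0}=-dx/x$, which produces the minus sign in front of $dx\land\theta$.
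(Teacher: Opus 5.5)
Your proposal is correct and follows the paper's proof: write $\omega=x^{-2}\bigl(-dx\wedge b+\tfrac{1}{2}[b\wedge b]\bigr)$, substitute $b\sim\theta-\tfrac{1}{2}\rho x^{2}+O(x^{3})$ from the preceding lemma, and collect powers of $x$. Your explicit checks of the bracket normalisation (via $[\mathfrak{a}_{j},\mathfrak{a}_{k}]=\varepsilon_{jk}{}^{i}\mathfrak{a}_{i}$) and of the symmetry $[\theta\wedge\rho]=[\rho\wedge\theta]$ spell out steps the paper uses without comment.
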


\begin{proof}
We have
\[
\omega=x^{-2}\left(-dx\land b+\frac{1}{2}\left[b\land b\right]\right).
\]
We know that
\[
b\sim\theta-\frac{1}{2}\rho x^{2}+O\left(x^{3}\right).
\]
Therefore,
\[
\frac{1}{2}\left[b\land b\right]\sim\frac{1}{2}\left[\theta\land\theta\right]-\frac{1}{2}\left[\theta\land\rho\right]x^{2}+O\left(x^{3}\right).
\]
It follows that
\begin{align*}
\omega\left(x\right) & \sim\left(-dx\land\theta+\frac{1}{2}\left[\theta\land\theta\right]\right)x^{-2}\\
 & +\left(\frac{1}{2}dx\land\rho-\frac{1}{2}\left[\theta\land\rho\right]\right)\\
 & +O\left(x\right)
\end{align*}
as claimed.
\end{proof}
From the expansion above, we obtain the following immediate
\begin{cor}
The coefficient of $x^{-1}$ in the expansion of $d\omega$ is zero.
\end{cor}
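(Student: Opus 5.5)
We differentiate the expansion of $\omega(x)$ from the previous corollary term by term. Since $\omega$ is a polyhomogeneous (here in fact smooth in $x$ up to the stated order) $\mathfrak{so}(3)$-valued $2$-form, its exterior derivative on the collar splits as $d\omega=d_{\partial X}\omega+dx\land\partial_{x}\omega$. The expansion has no $x^{-1}$ term, so
\[
\omega\sim\omega_{-2}x^{-2}+\omega_{0}+O(x),\qquad\omega_{-2}=-dx\land\theta+\tfrac{1}{2}\left[\theta\land\theta\right].
\]
The $x^{-1}$ coefficient of $d\omega$ can therefore come from only two sources. The first is $d_{\partial X}$ applied to the (absent) $x^{-1}$ coefficient, which gives nothing. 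The second is $dx\land\partial_{x}$ applied to the $x^{-2}$ and $x^{0}$ terms. The term $x^{0}$ contributes nothing at order $x^{-1}$ (its derivative is $O(x^{0})$ after accounting for the $O(x)$ remainder), and the $O(x)$ remainder also only contributes at order $x^{0}$ or higher.

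So we only need to examine $dx\land\partial_{x}\left(x^{-2}\omega_{-2}\right)=-2x^{-3}dx\land\omega_{-2}$, which sits at order $x^{-3}$, not $x^{-1}$. Similarly $d_{\partial X}(x^{-2}\omega_{-2})$ sits at order $x^{-2}$. Hence no term of order exactly $x^{-1}$ arises, and the claim follows.

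The one point needing care is that the remainder $O(x)$ in the corollary must be differentiable term by term, i.e. that $\partial_{x}$ of it is $O(1)$. This holds because $b(x)$ is smooth in $x$: by Proposition \ref{prop:parallel-transport} the coframe is smooth since the Levi-Civita $0$-connection of the smooth metric is smooth, and $h$ is smooth up to order $x^{3}$ in dimension four. So the remainder is a genuine Taylor remainder, and $d$ of it is $O(1)$ as a form. Concretely, writing $\omega=x^{-2}(-dx\land b+\frac12[b\land b])$ with $b=\theta-\frac12\rho x^{2}+x^{3}r(x)$ and $r$ smooth, $d\omega$ is a sum of terms $x^{-3}(\cdots)$, $x^{-2}(\cdots)$ and $x^{0}$ multiples of smooth forms. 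The $x^{-1}$ terms would have to come from $x^{1}$ terms in $b$, and these vanish.
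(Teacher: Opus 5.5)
Your proposal is correct and is essentially the paper's argument: the paper calls the corollary immediate from the expansion $\omega\sim\omega_{-2}x^{-2}+\omega_{0}+O(x)$, since differentiating termwise produces only orders $x^{-3}$, $x^{-2}$ and $O(1)$ when there is no $x^{-1}$ term and $\omega_{0}$ is independent of $x$. Your extra check, that the $O(x)$ remainder can be differentiated termwise because $b$ is smooth (or at least polyhomogeneous), is what the paper leaves implicit.
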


We are now ready to compute the equation for the coefficient $\alpha_{1}$
of $x$ in the expansion of $\alpha$. From the equation $d\omega+\left[\alpha\land\omega\right]=0$,
we obtain the equation
\[
\left[\alpha_{-1}\land\omega_{0}\right]+\left[\alpha_{1}\land\omega_{-2}\right]=0.
\]
Substituting $\alpha_{-1}=\theta$ and the expressions for $\omega_{0}$
and $\omega_{2}$, we obtain
\[
\left[\theta\land\left(\frac{1}{2}dx\land\rho-\frac{1}{2}\left[\theta\land\rho\right]\right)\right]+\left[\alpha_{1}\land\left(-dx\land\theta+\frac{1}{2}\left[\theta\land\theta\right]\right)\right].
\]
Isolating the $dx$ term, we obtain the equation
\[
\left[\alpha_{1}\land\theta\right]=\frac{1}{2}\left[\rho\land\theta\right],
\]
from which we obtain $\alpha_{1}=\frac{1}{2}\rho$. Invariantly, this
means that the coefficient $\alpha_{1}$ in the expansion of the geodesic
normal family $\alpha_{h_{0}}\left(x\right)$ of $\nabla^{+}$ is
$\frac{1}{2}\PP\left(h_{0}\right)^{\sharp}$. As a sanity check, observe
that this is compatible with the result of Theorem \ref{thm:coefficients-of-the-instanton-expansion}:
indeed, the trace of $\alpha_{1}$ is determined by $h_{0}$ and the
shape operator of the boundary, and in the Poincaré--Einstein case
it is precisely $\s\left(h_{0}\right)/8$, i.e. trace of $\frac{1}{2}\PP\left(h_{0}\right)$.
To conclude, the boundary term $\beta_{h_{0}}\left(\left[\nabla^{+}\right]\right)$
is precisely the trace-free part of $\alpha_{1}$, i.e. $\frac{1}{2}\mathring{\Ric}\left(h_{0}\right)^{\sharp}$,
as claimed.

\section{The renormalized Yang--Mills energy of a $0$-instanton}

Let us briefly recall some well-known facts about the Yang--Mills
energy of an $\SU\left(2\right)$ connection over a \emph{closed}
4-manifold. Let $\left(X^{4},g\right)$ be a closed, oriented Riemannian
4-manifold, and let $E\to X$ be an $\SU\left(2\right)$ vector bundle.
Recall that $E$ is determined topologically by its \emph{second Chern
class }$c_{2}\left(E\right)\in H^{4}\left(X;\mathbb{Z}\right)=\mathbb{Z}$.
Chern--Weil theory computes $c_{2}\left(E\right)$ in terms of an
arbitrary $\SU\left(2\right)$ connection $A$ as follows:
\begin{equation}
c_{2}\left(E\right)=\frac{1}{8\pi^{2}}\int_{X}\tr\left(F_{A}\land F_{A}\right).\label{eq:second-chern-number}
\end{equation}
Given an $\SU\left(2\right)$ connection $A$, the \emph{Yang--Mills
energy} of $A$ is
\begin{equation}
\mathcal{E}_{\mathrm{YM}}\left(A\right):=\frac{1}{8\pi^{2}}\int_{X}\left|F_{A}\right|^{2}\dVol_{g}.\label{eq:yang-mills-energy}
\end{equation}
If $E$ is trivial (i.e. $c_{2}\left(E\right)=0$), then the absolute
minimizers of $\mathcal{E}_{\mathrm{YM}}\left(A\right)$ are flat
connections. If $E$ is non-trivial, then the absolute minimizers
are either self-dual (if $c_{2}\left(E\right)<0$) or anti-self-dual
(if $c_{2}\left(E\right)>0$) connections: indeed, if $A$ is $\pm$
self-dual, then $\mathcal{E}_{\mathrm{YM}}\left(A\right)=\mp c_{2}\left(E\right)$.
Therefore, \emph{if $A$ is a self-dual connection on an $\SU\left(2\right)$
bundle over a closed oriented conformal 4-manifold, then its Yang--Mills
energy depends only on the topology of $E$}.

In this section, we will discuss the \emph{renormalized }Yang--Mills
energy of self-dual\emph{ $0$-connections }satisfying the Nahm pole
boundary condition. For these connections, the integral (\ref{eq:yang-mills-energy})
is \emph{infinite}, because of the presence of the Nahm pole. However,
if the base manifold $\left(X,g\right)$ is asymptotically Poincaré--Einstein
to sufficiently high order, then this energy can be \emph{renormalized}.
This renormalization procedure consists in choosing a metric $h_{0}\in\mathfrak{c}_{\infty}\left(g\right)$
and considering the \emph{truncated }Yang--Mills energy
\[
\mathcal{E}_{\mathrm{YM}}\left(A,h_{0},t\right):=\frac{1}{8\pi^{2}}\int_{X_{t}}\left|F_{A}\right|^{2}\dVol_{g},
\]
where $X_{t}$ is the portion of $X$ obtained by removing the geodesic
collar $[0,t)\times\partial X$ induced by $h_{0}$, for $t$ small
enough. We shall see that this function of $t$ has a polyhomogeneous
expansion as $t\to0$ of the form
\[
\mathcal{E}_{\mathrm{YM}}\left(A,h_{0},t\right)\sim\mathcal{E}_{-3}t^{-3}+\mathcal{E}_{-1}t^{-1}+\mathcal{E}_{0}+O\left(t\right).
\]

\begin{defn}
The constant term $\mathcal{E}_{0}$ in the expansion above is called
the \emph{renormalized Yang--Mills energy} of $A$. We denote it
by ${^{R}\mathcal{E}_{\mathrm{YM}}\left(A\right)}$.
\end{defn}

Since $\mathcal{E}_{\mathrm{YM}}\left(A,h_{0},t\right)$ depends on
$A$, $h_{0}$ and $t$, the coefficients of the expansion appear
to depend on $h_{0}$ and $A$. However, we shall see that the coefficients
of the divergent terms depend only on $h_{0}$, while ${^{R}\mathcal{E}_{\mathrm{YM}}\left(A\right)}$
is independent of $A$ \emph{and }$h_{0}$. In fact, we shall prove
that ${^{R}\mathcal{E}_{\mathrm{YM}}\left(A\right)}$ coincides with
the negative \emph{Chern--Simons invariant }of the conformal infinity
$\left(\partial X,\mathfrak{c}_{\infty}\left(g\right)\right)$.

Before stating the theorem precisely, let us recall the definition
of the Chern--Simons invariant. Let $Y$ be a closed oriented 3-manifold,
and let $V\to Y$ be an $\SU\left(2\right)$ vector bundle. Recall
that on a 3-manifold, all $\SU\left(2\right)$ vector bundles are
trivial. If $s=\left(s_{1},s_{2}\right)$ is a global frame of $V$,
and $\alpha$ is a $\SU\left(2\right)$ connection on $V$, we define
the \emph{Chern-Simons functional} \emph{relative to $s$} as
\[
\CS_{s}\left(\alpha\right)=\frac{1}{8\pi^{2}}\int_{Y}\tr\left(\alpha\land d\alpha+\frac{2}{3}\alpha\land\alpha\land\alpha\right),
\]
where in the formula above we identify $\alpha$ with the $\mathfrak{su}\left(2\right)$-valued
$1$-form representing $\alpha$ in the frame $s$. If $s'$ is another
frame, then $\CS_{s}\left(\alpha\right)-\CS_{s'}\left(\alpha\right)$
turns out to be an \emph{integer}, namely the degree of the map $Y\to\SU\left(2\right)$
representing the change of frames from $s$ to $s'$. Therefore, we
can drop the dependence on $s$ and think of $\CS\left(\alpha\right)$
as an $\mathbb{R}/\mathbb{Z}$-valued functional on the space of $\SU\left(2\right)$
connections on $Y$. If $Y$ is equipped with a Riemannian metric
$h$ and a spin structure $\left(\mathbb{S},\cl\right)$, and we denote
by $\omega$ the Levi-Civita spin connection on $\mathbb{S}$, then
$\CS\left(\omega\right)\in\mathbb{R}/\mathbb{Z}$ is called the \emph{Chern--Simons
invariant }of $\left(Y,h\right)$. In fact this invariant does not
depend on the choice of the spin structure, and crucially it is a
\emph{conformal invariant }of $\left(Y,\left[h\right]\right)$, which
we denote by $\CS\left(Y,\left[h\right]\right)$.

If $Y$ is the boundary of a compact, oriented 4-manifold $X$, then
the Chern--Simons functional on $Y$ can be promoted to an $\mathbb{R}$-valued
invariant, which however depends on the topology of $X$. Since all
$\SU\left(2\right)$ vector bundles on a compact 4-manifold with boundary
are trivial, we can extend $E$ arbitrarily to an $\SU\left(2\right)$
vector bundle $\boldsymbol{E}$ over $X$, and $\alpha$ to a $\SU\left(2\right)$
connection $A$ over $\boldsymbol{E}$. We can then define
\[
\CS_{X}\left(\alpha\right):=\frac{1}{8\pi^{2}}\int_{X}\tr\left(F_{A}^{2}\right)\in\mathbb{R}.
\]
This definition makes sense because, as the notation suggests, $\CS_{X}\left(\alpha\right)$
equals $\CS\left(\alpha\right)$ modulo $\mathbb{Z}$. Indeed, if
$A$ and $D$ are two $\SU\left(2\right)$ connections on $E$, then
writing $A=D+a$ we have
\[
\tr\left(F_{A}^{2}\right)=\tr\left(F_{D}^{2}\right)+d\tr\left(2a\land F_{D}+a\land d_{D}a+\frac{2}{3}a\land a\land a\right).
\]
Therefore, if we choose a trivialization $s$ of $E$, we extend it
to a trivialization of $\boldsymbol{E}$, and we choose $D$ as the
trivial connection associated to this trivialization, we have by Stokes'
Theorem
\begin{align*}
\frac{1}{8\pi^{2}}\int_{X}\tr\left(F_{A}^{2}\right) & =\CS_{s}\left(\alpha\right).
\end{align*}
The $\mathbb{Z}$ ambiguity in the definition of $\CS\left(\alpha\right)$
is broken by $X$, because the presence of the bulk manifold $X$
singles out a class of homotopic frames of $E$, namely those which
extend to $\boldsymbol{E}$. If $Y$ is equipped with a conformal
class $\left[h\right]$, we define $\CS_{X}\left(Y,\left[h\right]\right)$
by choosing a metric $h\in\left[h\right]$ and a spin structure $\left(\mathbb{S},\cl\right)\to\left(Y,h\right)$,
and then defining $\CS_{X}\left(Y,\left[h\right]\right):=\CS_{X}\left(\omega\right)$
where $\omega$ is the Levi-Civita spin connection on $\mathbb{S}$.

We can now state the theorem precisely:
\begin{thm}
\label{thm:renormalized-YM-energy}Let $\left(X^{4},g\right)$ be
an oriented conformally compact 4-manifold, asymptotically Poincaré--Einstein
to third order. Let $E\to X$ be an $\SU\left(2\right)$ vector bundle,
and let $A$ be a polyhomogeneous self-dual $0$-connection on $E$.
Given $h_{0}\in\mathfrak{c}_{\infty}\left(g\right)$, the asymptotic
expansion of the truncated Yang--Mills energy $\mathcal{E}_{\mathrm{YM}}\left(A,h_{0},t\right)$
is
\begin{align*}
\mathcal{E}_{\mathrm{YM}}\left(A,h_{0},t\right) & \sim\frac{1}{8\pi^{2}}\mathrm{Vol}\left(\partial X,h_{0}\right)t^{-3}\\
 & -\frac{3}{64\pi^{2}}\int_{\partial X}\s\left(h_{0}\right)\dVol_{h_{0}}t^{-1}\\
 & -\CS_{X}\left(\partial X,\mathfrak{c}_{\infty}\left(g\right)\right)\\
 & +O\left(t\right).
\end{align*}
In particular, the renormalized Yang--Mills energy is independent
of $A$ and $h_{0}$, and it equals $-\CS_{X}\left(\partial X,\mathfrak{c}_{\infty}\left(g\right)\right)$. 
\end{thm}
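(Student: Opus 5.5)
Since $A$ is self-dual, $F_A^-=0$, so on the interior $\left|F_{A}\right|^{2}\dVol_{g}=-\tr\left(F_{A}\land F_{A}\right)$ (with the trace normalized so that $|a|^2=-\tr(a^2)$ on $\mathfrak{su}(2)$). I will therefore reduce the truncated energy to a Chern--Weil integral over $X_{t}$ and then to a Chern--Simons boundary term. Choose a smooth $\SU(2)$ frame $s$ of $E$ over $X$ (bundles over a compact 4-manifold with boundary are trivial), and let $D$ be the trivial connection of $s$, so $A=D+a$ on $X_t$. By the transgression formula recalled above and Stokes,
\[
\mathcal{E}_{\mathrm{YM}}\left(A,h_{0},t\right)=-\frac{1}{8\pi^{2}}\int_{\partial X_{t}}\tr\left(a\land da+\tfrac{2}{3}a\land a\land a\right),
\]
where $\partial X_t=\{x=t\}$ carries the orientation induced from $X_t$, which is opposite to the Stokes orientation of $\partial X$. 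Restricted to $\{x=t\}\simeq\partial X$, the frame $s$ extends over $X$. Up to the $O(1)$ polyhomogeneous parallel-transport change of frame of Proposition \ref{prop:parallel-transport}, which is homotopic to $s$ because it extends over the collar, the restriction is the geodesic normal family $\alpha_{h_0}(t)$. Hence the energy equals $-\CS_{X}(\alpha_{h_0}(t))$, with the sign fixed by the orientation convention. The frame issue only matters modulo degree, and I will check that the gauge change on $\{x=t\}$ has degree zero for the frame class singled out by $X$.

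The core step is then expanding $\CS(\alpha(t))$ with $\alpha(t)=\theta t^{-1}+\omega+\alpha_1t+\alpha_{2}t^{2}+\dots$. By Corollary \ref{cor:asymptotically-PE-implies-vanishing-obstruction} the obstruction tensor vanishes, so Theorem \ref{thm:index-set-0-instantons} gives no log terms. Moreover $h_1=0$, so $\alpha_0=\omega$ by Theorem \ref{thm:coefficients-of-the-instanton-expansion}(1). Write $\alpha=\omega+\beta$ and use $\CS(\omega+\beta)=\CS(\omega)+\frac{1}{8\pi^2}\int\tr\left(2\beta\land f_\omega+\beta\land d_\omega\beta+\frac23\beta^3\right)$. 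I will then collect powers of $t$ using the structure identities $\frac12[\theta\land\theta]=\star_0\theta$, $d_\omega\theta=0$, and $\star_0 f_\omega=\G_0^\sharp$, together with Lemma \ref{lem:useful}:
\begin{enumerate}
\item The $t^{-3}$ term is $\frac23\tr\theta^3$, a multiple of $\dVol_{h_0}$, which gives $\frac{1}{8\pi^2}\mathrm{Vol}(\partial X,h_0)$.
\item The $t^{-2}$ term involves $\tr(\theta\land d_\omega\theta)=0$.
\item The $t^{-1}$ term collects $2\tr(\theta\land f_\omega)$ and the $\theta^2\alpha_1$ cross terms. With $\tr\alpha_1$ given by Theorem \ref{thm:coefficients-of-the-instanton-expansion}(2), this reduces to a multiple of $\int \s(h_0)$.
\item The $t^0$ term is $\CS(\omega)$ plus $\int\tr(\theta\land d_\omega\alpha_1)$ (exact, hence zero) and cross terms $\tr(\theta^2\alpha_2)$ and $\tr(\theta\alpha_1^2)$-type terms.
\end{enumerate}
The free part $\symmtf\alpha_1$ enters the $t^{-1}$ coefficient only through $\tr\alpha_1$, since $\tr(\theta\land\theta\land\gamma)$ sees only the trace of $\gamma$. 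This is why the divergent coefficients are independent of $A$.

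The main obstacle is the constant term. I must show that the contributions of $\alpha_2$ and $\symmtf\alpha_1$ cancel or vanish, which is where third-order Poincaré--Einstein enters. I would first use the evolution equation $\partial_x\alpha=-\star f_\alpha$ at order $x^{1}$ to express $\alpha_2$: by Theorem \ref{thm:coefficients-of-the-instanton-expansion}(4) it is determined by $\alpha_1$ and $h_0,\dots,h_3$, and $h_3=0$ in the Poincaré--Einstein-to-third-order case. The $\theta^2\alpha_2$ term picks out $\tr\alpha_2$, obtained from the trace part of the $k=2$ equation. I expect this trace part to be a divergence plus a term quadratic in $\alpha_1$, which then cancels against $\tr(\theta\land\alpha_1\land\alpha_1)$.

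An alternative, possibly cleaner, route is the exact identity $\frac{d}{dt}\CS(\alpha(t))=\frac{1}{4\pi^2}\int\tr(\dot\alpha\land f_\alpha)$, which by self-duality equals $-\frac{1}{4\pi^2}\int\tr(\star f_\alpha\land f_\alpha)$. This is minus the energy density on the slice, so it is consistent with the first step, and the $t^{-1}$ coefficient of this derivative is what must vanish. I would expand $f_\alpha$ to the needed order and check directly that the $t^{-1}$ coefficient vanishes using $h_1=h_3=0$ and $h_2=-\PP(h_0)$. Finally, I will use that $\CS_X(\omega)$ for $h_0$ is the conformal invariant $\CS_X(\partial X,\mathfrak{c}_\infty(g))$, giving the constant $-\CS_{X}$.
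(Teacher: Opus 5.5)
Your overall architecture is the paper's. You transgress $\tr(F_A^2)$ to $\CS_X(\omega)$ plus the boundary integral of $\tr\bigl(2\tilde\alpha\wedge f_\omega+\tilde\alpha\wedge d_\omega\tilde\alpha+\tfrac23\tilde\alpha^3\bigr)$, with $\tilde\alpha=\theta t^{-1}+\alpha_1t+\alpha_2t^2+\dots$. Your treatment of the $t^{-3}$, $t^{-2}$, $t^{-1}$ coefficients is fine. Your observation that $\tr(\theta\wedge d_\omega\alpha_1)=-d\,\tr(\theta\wedge\alpha_1)$ is exact (because $d_\omega\theta=0$) is slightly cleaner than the paper's pointwise argument, which needs $\alpha_1$ symmetric.

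The gap is in the constant term, which is the real content of the theorem. First, your bookkeeping there is wrong. $\tr(\theta\wedge\alpha_1\wedge\alpha_1)$ has order $t^{-1+1+1}=t$, so it is not in the constant coefficient; the only cubic contribution is $2\tr(\theta\wedge\theta\wedge\alpha_2)=-(\tr\alpha_2)\dVol_{h_0}$. Likewise, the $x^1$ coefficient of $\partial_x\alpha=-\star f_\alpha$ contains no term quadratic in $\alpha_1$; those first appear at order $x^2$. So the cancellation you expect does not exist, and $\int_{\partial X}\tr\alpha_2$ has to vanish by itself. Second, third-order Poincar\'e--Einstein does \emph{not} give $h_3=0$. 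It gives $h_1=0$, $h_2=-\PP(h_0)$ and only $\tr_{h_0}h_3=0$, with $\tf h_3$ arbitrary; for genuine Poincar\'e--Einstein metrics it is the undetermined datum and is generically nonzero. So you cannot dispose of the $h_3$-contribution by saying $\star_3=0$.

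What is missing is the following computation. With $\star_1=0$, $d_\omega\theta=0$ and Lemma \ref{lem:useful}, the $x^1$ coefficient reads $2\alpha_2=-\star_0d_\omega\alpha_1-\star_0[\theta\wedge\alpha_2]-\star_3\star_0\theta$. Taking traces gives $4\tr\alpha_2=-\tr(\star_0d_\omega\alpha_1)-\tr(\star_3\star_0\theta)$.
\begin{itemize}
\item The first term integrates to zero by your exactness observation.
\item The second term must be shown to depend only on the trace of $h_3$. The paper obtains $\tr(\star_3\star_0\theta)=-\frac12\tr_{h_0}h_3$ from Lemma \ref{lem:computations-in-collar}. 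Equivalently, the first variation of the Hodge star gives $\star_0\star_3=-h_3^{\sharp\star}+\frac12\tr_{h_0}h_3$ on $\Lambda^1$, so $\tf h_3$ drops out of the trace.
\item It is then exactly the hypothesis $\tr_{h_0}h_3=0$ that kills this term.
\end{itemize}

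Your alternative route cannot replace this step. Differentiating $\CS(\alpha(t))\sim c_{-3}t^{-3}+c_{-1}t^{-1}+c_0+O(t)$ annihilates $c_0$. So the $t^{-1}$ coefficient of $\frac{d}{dt}\CS(\alpha(t))$ only detects a possible $\log t$ term, and says nothing about whether $c_0=\CS_X(\omega)$.

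A minor slip: $\{x=t\}$ as the boundary of $X_t$ has outward normal $-\partial_x$, just as $\partial X$ does in $X$. So the two orientations agree rather than being opposite, and that agreement is what yields your final sign $-\CS_X(\alpha_{h_0}(t))$.
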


The remaining part of the section is devoted to the proof of this
theorem. Choose $h_{0}\in\mathfrak{c}_{\infty}\left(g\right)$, and
write as usual the geodesic normal form as
\[
g=\frac{dx^{2}+h\left(x\right)}{x^{2}}
\]
in the geodesic collar $[0,\varepsilon)\times\partial X$. Expand
$h\left(x\right)\sim h_{0}+h_{1}x+h_{2}x^{2}+h_{3}x^{3}+o\left(x^{3}\right)$.
Then the assumption that $g$ is asymptotically Poincaré--Einstein
to third order, i.e. that $\Ric\left(g\right)+3g=o\left(x^{3}\right)$
as a symmetric $0$-$2$-tensor, is equivalent to the conditions
\begin{align*}
h_{1} & =0\\
h_{2} & =-\PP\left(h_{0}\right)\\
\tr_{h_{0}}h_{3} & =0.
\end{align*}
The symmetric trace-free part of $h_{3}$ remains arbitrary, and we
do not need to fix it here.

Let $A$ be a polyhomogeneous self-dual $0$-connection satisfying
the Nahm pole boundary condition. The integrand $\left|F_{A}\right|^{2}\dVol_{g}$
is gauge invariant, and therefore by Theorem \ref{thm:obstruction-zero-iff-smooth}
and Corollary \ref{cor:asymptotically-PE-implies-vanishing-obstruction}
we can assume that $A$ is smooth. Following §\ref{subsec:Asymptotic-expansions-of-self-dual-0-connections},
the expansion of $\alpha_{h_{0}}\left(x\right)$ in the geodesic collar
takes the form
\[
\alpha_{h_{0}}\left(x\right)\sim\theta x^{-1}+\omega+\alpha_{1}x+\alpha_{2}x^{2}+o\left(x^{2}\right),
\]
where:
\begin{enumerate}
\item $\omega$ is the Levi-Civita spin connection on $\left(\mathbb{S},\cl_{h_{0}}\right)$;
\item $\sk\alpha_{1}=0$ and $\tr\alpha_{1}=\frac{\s\left(h_{0}\right)}{8}$,
while $\symmtf\alpha_{1}$ is not fixed;
\item $\alpha_{2}$ is completely determined by $h_{0}$ and $\symmtf\alpha_{1}$.
\end{enumerate}
Let $D$ be a connection on $E$ which equals $\omega$ on the collar.
Then, on the collar, we have $A-D=\tilde{\alpha}$, where
\[
\tilde{\alpha}\left(x\right)\sim\theta x^{-1}+\alpha_{1}x+\alpha_{2}x^{2}+o\left(x^{2}\right).
\]
By Stokes' theorem, we have
\[
\int_{X_{t}}\tr\left(F_{A}^{2}\right)=\int_{X_{t}}\tr\left(F_{D}^{2}\right)+\int_{\partial X_{t}}\tr\left(2\tilde{\alpha}\land f_{\omega}+\tilde{\alpha}\land d_{\omega}\tilde{\alpha}+\frac{2}{3}\tilde{\alpha}\land\tilde{\alpha}\land\tilde{\alpha}\right).
\]
Since $D_{|\partial X}=\omega$, we have by definition
\[
\lim_{t\to0}\frac{1}{8\pi^{2}}\int_{X_{t}}\tr\left(F_{D}^{2}\right)=\CS_{X}\left(\partial X,\mathfrak{c}_{\infty}\left(g\right)\right).
\]
Therefore, it remains to expand the boundary integral
\begin{equation}
\int_{\partial X_{t}}\tr\left(2\tilde{\alpha}\land f_{\omega}+\tilde{\alpha}\land d_{\omega}\tilde{\alpha}+\frac{2}{3}\tilde{\alpha}\land\tilde{\alpha}\land\tilde{\alpha}\right)\label{eq:boundary_integral}
\end{equation}
in powers of $t$. For ease of notation, we denote by $\s_{0}$ and
$\dVol_{0}$ the scalar curvature and volume forms of $\left(\partial X,h_{0}\right)$. 

\subsubsection*{Coefficient of $t^{-3}$}

The coefficient of $t^{-3}$ in the expansion of (\ref{eq:boundary_integral})
is
\[
\int_{\partial X}\tr\left(\frac{2}{3}\theta\land\theta\land\theta\right).
\]
Since $\theta$ is the soldering form, an elementary computation yields
\[
\tr\left(\frac{2}{3}\theta\land\theta\land\theta\right)=-\dVol_{0}.
\]
Therefore, the coefficient of $t^{-3}$ in the expansion of $\mathcal{E}_{\mathrm{YM}}\left(A,h_{0},t\right)$
is
\[
\frac{1}{8\pi^{2}}\mathrm{Vol}\left(h_{0}\right).
\]

\subsubsection*{Coefficient of $t^{-2}$}

The only term that could contribute to the coefficient of $t^{-2}$
in the expansion of (\ref{eq:boundary_integral}) would be the integral
of $\tr\left(\theta\land d_{\omega}\theta\right)$. However, since
$\omega$ is the Levi-Civita spin connection, $d_{\omega}\theta=0$.
Therefore, the coefficient of $t^{-2}$ vanishes.

\subsubsection*{Coefficient of $t^{-1}$}

Using the cyclic property of the trace, the coefficient of $t^{-1}$
in the expansion of (\ref{eq:boundary_integral}) is
\[
2\int_{\partial X}\tr\left(\theta\land f_{\omega}+\theta\land\theta\land\alpha_{1}\right).
\]
Since $\star_{0}f_{\omega}=\G_{0}^{\sharp}$, it is easy to compute
\[
2\tr\left(\theta\land f_{\omega}\right)=\frac{\s_{0}}{2}\dVol_{0}.
\]
Similarly, since $\tr\alpha_{1}=\frac{\s_{0}}{8}$, we obtain
\[
2\tr\left(\theta\land\theta\land\alpha_{1}\right)=-\frac{\s_{0}}{8}\dVol_{0}.
\]
It follows that the coefficient of $t^{-1}$ in the expansion of $\mathcal{E}_{\mathrm{YM}}\left(A,h_{0},t\right)$
is
\[
-\frac{3}{64\pi^{2}}\int_{\partial X}\s_{0}\dVol_{0}.
\]

\subsubsection*{Constant coefficient}

The constant coefficient in the expansion of (\ref{eq:boundary_integral})
is
\[
\int_{\partial X}\tr\left(\theta\land d_{\omega}\alpha_{1}+2\theta\land\theta\land\alpha_{2}\right).
\]
We want to prove that this integral vanishes. The $2$-form $d_{\omega}\alpha_{1}$
is the curl of $\alpha_{1}$, and $\tr\left(\theta\land d_{\omega}\alpha_{1}\right)$
is a constant multiple of its trace, multiplied by $\dVol_{0}$. The
Poincaré--Einstein condition implies that $\alpha_{1}$ is symmetric,
and the curl of a symmetric $2$-tensor is always trace-free; it follows
that $\tr\left(\theta\land d_{\omega}\alpha_{1}\right)=0$. Concerning
the second term, we have
\[
2\theta\land\theta\land\alpha_{2}=-\tr\alpha_{2}\dVol_{h_{0}}.
\]
We need to show that $\tr\alpha_{2}=0$. This trace can be computed
using the self-duality equation, as we did in the proof of Theorem
\ref{thm:coefficients-of-the-instanton-expansion}. Specifically,
we need to isolate the coefficient of $x$ in the expansion of $\partial_{x}\alpha=-\star\left(f_{\alpha_{0}}+d_{\alpha_{0}}\tilde{\alpha}+\frac{1}{2}\left[\tilde{\alpha}\land\tilde{\alpha}\right]\right)$.
From $h_{1}=0$, we have $\star_{1}=0$ by Lemma \ref{lem:computations-in-collar},
and moreover the asymptotic Poincaré--Einstein condition implies
that $\alpha_{0}=\omega$ and therefore $d_{\omega}\theta=0$. We
then obtain the equation
\[
2\alpha_{2}=-\star_{0}d_{\omega}\alpha_{1}-\star_{0}\left[\theta\land\alpha_{2}\right]-\star_{3}\frac{1}{2}\left[\theta\land\theta\right].
\]
By Lemma \ref{lem:useful}, we have $\star_{0}\left[\theta\land\alpha_{2}\right]=\left(\tr\alpha_{2}\right)1_{TY}-\alpha_{2}^{*}$.
Using again the properties of the soldering form, we obtain
\[
\alpha_{2}+2\sk\alpha_{2}=-\star_{0}d_{\omega}\alpha_{1}-\left(\tr\alpha_{2}\right)1_{TY}-\star_{3}\star_{0}\theta.
\]
Taking the trace, we obtain
\[
\tr\alpha_{2}=-\frac{1}{4}\tr\left(\star_{3}\star_{0}\theta\right).
\]
In order to compute this last trace, we remember from Lemma \ref{lem:computations-in-collar}
that
\[
\partial_{x}^{2}\star=\star\left(6\left(\shape^{\star}\right)^{2}-4\shape^{\star}\HH+\HH^{2}+2\overline{\R}^{N\star}-\tr\shape^{2}-\overline{\Ric}^{N}\right).
\]
We have $\star_{3}=\frac{1}{6}\left(\partial_{x}^{3}\star\right)_{|x=0}$,
so using the fact that $\shape_{0}=0$ and $\HH_{0}=0$, we obtain
\[
\star_{3}\star_{0}=-\star_{0}\star_{3}=-\frac{1}{6}\partial_{x}\left(2\overline{\R}^{N\star}-\overline{\Ric}^{N}\right)_{|x=0}.
\]
Taking the trace, and using again Lemma \ref{lem:computations-in-collar},
we obtain
\[
\tr\left(\star_{3}\star_{0}\theta\right)=\frac{1}{6}\left(\partial_{x}^{2}\HH\right)_{|x=0}=-\frac{1}{2}\tr_{h_{0}}h_{3}.
\]
By the asymptotic Poincaré--Einstein condition, this last trace vanishes,
and this completes the proof.

\appendix
\stepcounter{section}

\section*{Appendix}

\subsection{\label{subsec:Polyhomogeneity}Polyhomogeneity and log-smoothness}

In this subsection, we briefly recall the definitions and basic properties
of polyhomogeneous functions over a compact manifold with boundary
$X$. For a more complete analysis of this topic, we refer to §4 of
\cite{MelroseCorners}.

Roughly speaking, a polyhomogeneous functions on $X$ is a smooth
function on $X^{\circ}$ which admits a Taylor-like expansion near
the boundary, where the terms of this expansion are allowed to be
of the form $x^{\alpha}\left(\log x\right)^{l}$ with $\left(\alpha,l\right)$
ranging in a discrete subset of $\mathbb{C}\times\mathbb{N}$. Let
us be more precise.
\begin{defn}
We denote by $\dot{C}^{\infty}\left(X;\mathcal{D}_{X}^{1}\right)$
the Fréchet space of smooth $1$-densities on $X$ vanishing to infinite
order along $\partial X$, equipped with the topology of uniform convergence
of all derivatives. We denote by $C^{-\infty}\left(X\right)$ the
weak dual of $\dot{C}^{\infty}\left(X;\mathcal{D}_{X}^{1}\right)$.
Its elements are called \emph{extendible distributions} on $X$.
\end{defn}

The Fréchet space $C^{-\infty}\left(X\right)$ is a large ambient
space of distributions on $X$, and every Fréchet and Banach space
of functions considered in this paper embeds continuously into $C^{-\infty}\left(X\right)$.
This, among other things, allows us to take weak directional derivatives
of functions in $L^{p}$ spaces.
\begin{defn}
Let $\delta\in\mathbb{R}$, and let $x$ be a boundary defining function
for $X$. A \emph{$O\left(x^{\delta}\right)$ conormal }function on
$X$ is a function $u\in x^{\delta}L^{\infty}\left(X\right)$ such
that, for every finite set $V_{1},...,V_{k}$ of vector fields on
$X$ tangent to $\partial X$, the weak derivative $V_{1}\cdots V_{k}u$
is again in $x^{\delta}L^{\infty}\left(X\right)$. We denote by $\mathcal{A}^{\delta}\left(X\right)$
the space of $O\left(x^{\delta}\right)$ conormal functions on $X$.
\end{defn}

\begin{rem}
Conormal functions on $X$ are always smooth in $X^{\circ}$, and
they have stable regularity and decay under an arbitrary number of
derivatives in directions tangent to $\partial X$. However, they
can lose regularity or decay when differentiated by vector fields
transverse to $\partial X$. In particular, conormal functions do
not necessarily have a Taylor expansion on $\partial X$. For example,
if $x$ is a boundary defining function and $f\in C^{\infty}\left(X\right)$,
then the function $x^{\cos\left(f\right)}$ is $O\left(x^{-1-\varepsilon}\right)$
conormal for every $\varepsilon>0$, but does not generally have a
Taylor expansion.
\end{rem}

\begin{defn}
An \emph{index set }is a subset $\mathcal{E}\subseteq\mathbb{C}\times\mathbb{N}$
with the following properties:
\begin{enumerate}
\item if $\left(\alpha,l\right)\in\mathcal{E}$, then $\left(\alpha+k,l'\right)\in\mathcal{E}$
for every $k\in\mathbb{N}$ and $l'\leq l$;
\item for every $\delta\in\mathbb{R}$, the set $\mathcal{E}_{\delta}=\left\{ \left(\alpha,l\right)\in\mathcal{E}:\Re\left(\alpha\right)\leq\delta\right\} $
is finite.
\end{enumerate}
\end{defn}

Given an index set $\mathcal{E}$ and a real number $\delta$, we
say that $\Re\left(\mathcal{E}\right)>\delta$ if $\Re\left(\alpha\right)>\delta$
for every $\left(\alpha,l\right)\in\mathcal{E}$.
\begin{defn}
A \emph{polyhomogeneous function on $X$ with index set $\mathcal{E}$}
is a conormal function $u$ on $X$ such that, for some (hence every)
boundary defining function $x$ for $X$, there exist functions $u_{\alpha,l}\in C^{\infty}\left(X\right)$
such that
\[
u-\sum_{\begin{smallmatrix}\left(\alpha,l\right)\in\mathcal{E}\\
\Re\left(\alpha\right)\leq\delta
\end{smallmatrix}}u_{\alpha,l}x^{\alpha}\left(\log x\right)^{l}\in\mathcal{A}^{\delta}\left(X\right)
\]
for every $\delta\in\mathbb{R}$.
\end{defn}

\begin{rem}
We express the condition in the second point above as
\[
u\sim\sum_{\left(\alpha,l\right)\in\mathcal{E}}u_{\alpha,l}x^{\alpha}\left(\log x\right)^{l}.
\]
Sometimes, we will use a similar formula when the coefficients $u_{\alpha,l}$
are only in $C^{\infty}\left(\partial X\right)$. When we do that,
we mean that we are tacitly extending $u_{\alpha,l}$ to the interior;
specifically, we choose an auxiliary vector field $V$ on $X$ such
that $Vx\equiv1$ near $\partial X$, and we use the flow of $V$
starting at $\partial X$ to construct a collar neighborhood $[0,\varepsilon)\times\partial X\hookrightarrow X$
for $\partial X$ in $X$; we can then extend a function $C^{\infty}\left(\partial X\right)$
to a smooth function on $X$ by first extending it trivially to the
collar, and then multiplying it by a cutoff function supported in
the collar and equal to $1$ near $\partial X$.
\end{rem}

\begin{rem}
Let $\mathcal{E}$ be an index set of the form $\mathcal{E}=\mathbb{N}\cup\mathcal{I}$,
with $\Re\left(\mathcal{I}\right)>k\in\mathbb{N}$. If $u\in\mathcal{A}_{\phg}^{\mathcal{E}}\left(X\right)$,
then $u\in C^{k}\left(X\right)$, and the first $k+1$ Taylor coefficients
of $u$ are all smooth. Indeed, by definition, we can write $u=u_{0}+\tilde{u}$
with $u_{0}$ smooth and $\tilde{u}$ polyhomogeneous with index set
$\mathcal{I}$. In particular, $\tilde{u}=O\left(x^{\delta}\right)$
for some $\delta>k$.
\end{rem}

\begin{defn}
\label{def:log-smoothness}A polyhomogeneous function $u\in\mathcal{A}_{\phg}\left(X\right)$
is said to be \emph{log-smooth of order $k$} if $u\in\mathcal{A}_{\phg}^{\mathcal{E}}\left(X\right)$
with $\mathcal{E}$ generated by the pairs $\left(lk,l\right)$, with
$l,k\in\mathbb{N}$.
\end{defn}

\begin{rem}
The previous definition is not standard in the literature; however,
in the author's experience, log-smooth functions, or sections of bundles,
appear often as solutions of semi-linear $0$-elliptic problems (cf.
Remark \ref{rem:regularity-PE}).
\end{rem}

\subsection{\label{subsec:Geometry-of-spin-3D}Geometry of 3-manifolds}

In this and the next subsections, we fix our conventions on the Riemannian
and spin geometry of 3-manifolds, and we perform the elementary computations
needed in §\ref{sec:Asymptotic-expansion-of-self-dual-0-connections}.

Let $\left(Y^{3},h\right)$ be a closed oriented Riemannian 3-manifold.
Denote by
\begin{align*}
\flat & :TY\to\Lambda^{1}\\
\sharp & :\Lambda^{1}\to TY
\end{align*}
the musical isomorphisms, and call $\star$ the Hodge star operator.
We have a canonical isomorphism $TY\simeq\mathfrak{so}\left(TY\right)$,
induced by the isomorphism
\begin{align*}
\mathbb{R}^{3} & \to\mathfrak{so}\left(3\right)\\
e_{i} & \mapsto\mathfrak{a}_{i}
\end{align*}
where
\[
\mathfrak{a}_{1}=\left(\begin{matrix}0 & 0 & 0\\
0 & 0 & -1\\
0 & 1 & 0
\end{matrix}\right),\mathfrak{a}_{2}=\left(\begin{matrix}0 & 0 & 1\\
0 & 0 & 0\\
-1 & 0 & 0
\end{matrix}\right),\mathfrak{a}_{3}=\left(\begin{matrix}0 & -1 & 0\\
1 & 0 & 0\\
0 & 0 & 0
\end{matrix}\right);
\]
here the metric on $\mathfrak{so}\left(3\right)$ is
\[
\left\langle \mathfrak{a},\mathfrak{b}\right\rangle =-\frac{1}{2}\tr\left(\mathfrak{a}\mathfrak{b}\right).
\]
We call $\Ric\left(h\right)$, $\mathring{\Ric}\left(h\right)$, $\G\left(h\right)$
and $\s\left(h\right)$ the Ricci tensor, trace-free Ricci tensor,
Einstein tensor, and scalar curvature, respectively. Using the identification
$\Lambda^{1}\otimes\mathfrak{so}\left(TY\right)\simeq\Lambda^{1}\otimes\Lambda^{1}$,
we can decompose canonically and orthogonally
\[
\Lambda^{1}\otimes\mathfrak{so}\left(TY\right)=S^{2}\oplus\Lambda^{2},
\]
where $S^{2}$ is the bundle of symmetric $2$-tensors. Under the
identification $\Lambda^{1}\otimes\Lambda^{1}=\mathfrak{gl}\left(TY\right)$,
sections of $S^{2}$ (resp. $\Lambda^{2}$) correspond to self-adjoint
(resp. skew-adjoint) endomorphisms of $TY$. Using the endomorphism
notation, we define
\begin{align*}
\symm\gamma & =\frac{\gamma+\gamma^{*}}{2}\\
\sk\gamma & =\left(\frac{\gamma-\gamma^{*}}{2}\right).
\end{align*}
The bundle $S^{2}$ further decomposes orthogonally as $S_{0}^{2}\oplus\Lambda^{0}$,
where $S_{0}^{2}$ is the bundle of symmetric trace-free $2$-tensors
and $\Lambda^{0}$ is the trivial real line bundle. If $\gamma$ is
already self-adjoint, we define the trace-free part as
\[
\tf\gamma=\gamma-\frac{\tr\gamma}{3}\id
\]
and we define
\[
\symmtf\gamma=\tf\left(\symm\gamma\right).
\]
Let $\nabla$ be the Levi-Civita connection on $TY$. Its curvature
$F_{\nabla}$ is a section of $\Lambda^{2}\otimes\mathfrak{so}\left(TY\right)$.
Identifying $\mathfrak{so}\left(TY\right)$ with $\Lambda^{1}$, $\star F_{\nabla}$
can be interpreted as a section of $\Lambda^{1}\otimes\Lambda^{1}$.
Under this identification, $\star F_{\nabla}$ is the Einstein tensor
$\G$.

Let's now describe our convention on spin geometry.
\begin{defn}
A \emph{spin structure} on $\left(Y,h\right)$ is the datum of:
\begin{enumerate}
\item an $\SU\left(2\right)$ vector bundle $\mathbb{S}\to Y$, called the
\emph{spinor bundle};
\item a bundle map $\cl:T^{*}Y\to\mathfrak{su}\left(\mathbb{S}\right)$,
called \emph{Clifford multiplication}, such that $\frac{1}{2}\cl$
is an isomorphism of Lie algebra bundles, i.e.
\[
\cl\left(\star\left(\alpha\land\beta\right)\right)=\frac{1}{2}\left[\cl\alpha,\cl\beta\right]
\]
for every pair of covectors $\alpha,\beta$ in the same fiber of $T^{*}Y$.
\end{enumerate}
Two spin structures $\left(\mathbb{S},\cl\right)$ and $\left(\mathbb{S}',\cl'\right)$
are \emph{equivalent }if there exists an isomorphism of $\SU\left(2\right)$
bundles $\phi:\mathbb{S}\to\mathbb{S}'$ which is compatible with
the Clifford multiplications, i.e. for every $p\in Y$ and $\alpha\in T_{p}^{*}Y$
we have
\[
\phi_{p}\circ\cl_{p}\left(\alpha\right)s=\cl_{p}^{*}\left(\alpha\right)\circ\phi_{p}.
\]

\end{defn}

\begin{rem}
With the convention adopted here, we have $\cl\left(\dVol_{h}\right)=-1$.
\end{rem}


Fix a spin structure $\left(\mathbb{S},\cl\right)$ on $\left(Y,h\right)$.
Then we have a canonical morphism of principal bundles $\Fr_{\SU\left(2\right)}\left(\mathbb{S}\right)\to\Fr_{\SO\left(3\right)}\left(TY\right)$,
which restricts on every fiber to the universal covering map. This
implies that an $\SU\left(2\right)$ frame $s_{1},s_{2}$ of $\mathbb{S}$
determines a unique $\SO\left(3\right)$ coframe $\theta^{1},\theta^{2},\theta^{3}$
of $T^{*}Y$, i.e. the coframe such that, in terms of $s_{1},s_{2}$,
we have $\cl\left(\theta^{i}\right)=\sigma_{i}$ where $\sigma_{1},\sigma_{2},\sigma_{3}$
is the basis of $\mathfrak{su}\left(2\right)$ given by
\[
\sigma_{1}=\left(\begin{matrix}i\\
 & -i
\end{matrix}\right),\sigma_{2}=\left(\begin{matrix} & 1\\
-1
\end{matrix}\right),\sigma_{3}=\left(\begin{matrix} & i\\
i
\end{matrix}\right).
\]
We also get a canonical isomorphism $\mathfrak{so}\left(TY\right)\simeq\mathfrak{su}\left(\mathbb{S}\right)$:
using the $\SU\left(2\right)$ frame $s_{1},s_{2}$ of $\mathbb{S}$,
the associated $\SO\left(3\right)$ coframe $\theta^{1},\theta^{2},\theta^{3}$
of $T^{*}Y$, and its dual frame $\theta_{1},\theta_{2},\theta_{3}$,
the isomorphism $\mathfrak{su}\left(\mathbb{S}\right)\to\mathfrak{so}\left(TY\right)$
corresponds to the derivative at the identity of the covering map
$\SU\left(2\right)\to\SO\left(3\right)$, i.e. the map
\begin{align*}
\mathfrak{su}\left(2\right) & \to\mathfrak{so}\left(3\right)\\
\frac{\sigma_{i}}{2} & \mapsto\mathfrak{a}_{i}.
\end{align*}

\begin{defn}
The \emph{soldering form} is the $\mathfrak{su}\left(\mathbb{S}\right)$-valued
$1$-form
\[
\theta:=\frac{1}{2}\cl.
\]
\end{defn}

\begin{rem}
In terms of an $\SU\left(2\right)$ frame $s_{1},s_{2}$ of $\mathbb{S}$
and the associated $\SO\left(3\right)$ coframe $\theta^{1},\theta^{2},\theta^{3}$
of $T^{*}Y$, we have
\[
\theta=\theta^{i}\otimes\frac{\sigma_{i}}{2}.
\]
\end{rem}

The soldering form $\theta$ determines an identification of $\SO\left(3\right)$
vector bundles $TY\to\mathfrak{su}\left(\mathbb{S}\right)$. Therefore,
we can canonically identify an $\mathfrak{su}\left(\mathbb{S}\right)$-valued
$1$-form $\gamma$ with the endomorphism $\theta^{-1}\circ\gamma$
of $TY$. Using an $\SU\left(2\right)$ frame $s_{1},s_{2}$ of $\mathbb{S}$
and the associated coframe $\theta^{1},\theta^{2},\theta^{3}$ of
$T^{*}Y$, if
\[
\gamma=\gamma_{j}^{i}\theta^{j}\otimes\frac{\sigma_{i}}{2},
\]
then the associated endomorphism $\theta^{-1}\circ\gamma$ is
\[
\theta^{-1}\circ\gamma=\gamma_{j}^{i}\theta_{i}\otimes\theta^{j}.
\]
Under this identification, we have $\theta=\id$.
\begin{lem}
\label{lem:useful}Let $\gamma$ be an $\mathfrak{su}\left(\mathbb{S}\right)$-valued
$1$-form on $Y$. Call $\omega$ the Levi-Civita spin connection
on $\mathbb{S}$. Then
\begin{align*}
\star\left[\theta\land\gamma\right] & =\star\left[\gamma\land\theta\right]=\left(\tr\gamma\right)1_{TY}-\gamma^{*}\\
\star\frac{1}{2}\left[\gamma\land\gamma\right] & =\left(\gamma^{*}\right)^{2}-\left(\tr\gamma\right)\gamma^{*}+\frac{1}{2}\left[\left(\tr\gamma\right)^{2}-\tr\left(\gamma^{2}\right)\right]1_{TY}.
\end{align*}
In these equations, on the left hand side $\gamma$ is interpreted
as an $\mathfrak{su}\left(\mathbb{S}\right)$-valued $1$-form, while
on the right hand side it is interpreted as an endomorphism of $TY$.
\end{lem}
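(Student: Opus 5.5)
Both identities are pointwise and purely algebraic: the Levi-Civita spin connection $\omega$ named in the statement plays no role. I will therefore verify them at a single point in a local $\SU\left(2\right)$ frame $s_{1},s_{2}$ of $\mathbb{S}$, with associated $\SO\left(3\right)$ coframe $\theta^{1},\theta^{2},\theta^{3}$. In this frame $\theta=\theta^{i}\otimes\frac{\sigma_{i}}{2}$, and I write $\gamma=\gamma_{j}^{i}\theta^{j}\otimes\frac{\sigma_{i}}{2}$, which corresponds to the endomorphism with entries $\gamma_{j}^{i}$; its adjoint $\gamma^{*}$ is the transposed matrix. The only inputs are the structure equations and the Hodge star on an oriented orthonormal coframe:
\[
\left[\frac{\sigma_{a}}{2},\frac{\sigma_{b}}{2}\right]=\varepsilon_{abc}\frac{\sigma_{c}}{2},\qquad\star\left(\theta^{k}\land\theta^{j}\right)=\varepsilon_{kjm}\theta^{m}.
\]

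For the first identity, expand the bracket:
\[
\left[\theta\land\gamma\right]=\gamma_{j}^{i}\,\theta^{k}\land\theta^{j}\otimes\left[\frac{\sigma_{k}}{2},\frac{\sigma_{i}}{2}\right]=\varepsilon_{kil}\gamma_{j}^{i}\,\theta^{k}\land\theta^{j}\otimes\frac{\sigma_{l}}{2}.
\]
Applying $\star$ gives the coefficient $\varepsilon_{kil}\varepsilon_{kjm}\gamma_{j}^{i}$ of $\theta^{m}\otimes\frac{\sigma_{l}}{2}$. The contraction identity $\varepsilon_{kil}\varepsilon_{kjm}=\delta_{ij}\delta_{lm}-\delta_{im}\delta_{lj}$ turns this into $\left(\tr\gamma\right)\delta_{lm}-\gamma_{l}^{m}$. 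This is the matrix of $\left(\tr\gamma\right)1_{TY}-\gamma^{*}$; the transpose appears because the indices $l$ and $m$ are swapped relative to $\gamma_{j}^{i}$. The equality $\left[\gamma\land\theta\right]=\left[\theta\land\gamma\right]$ is the graded symmetry of the bracket of two Lie-algebra-valued $1$-forms: both the wedge and the Lie bracket are antisymmetric, so the two signs cancel.

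For the second identity, the same expansion gives
\[
\star\frac{1}{2}\left[\gamma\land\gamma\right]=\frac{1}{2}\varepsilon_{acl}\varepsilon_{bdm}\gamma_{b}^{a}\gamma_{d}^{c}\,\theta^{m}\otimes\frac{\sigma_{l}}{2}.
\]
The coefficient $\frac{1}{2}\varepsilon_{acl}\varepsilon_{bdm}\gamma_{b}^{a}\gamma_{d}^{c}$ is exactly the $\left(l,m\right)$ cofactor of the $3\times3$ matrix $\left(\gamma_{j}^{i}\right)$. Cayley--Hamilton in dimension $3$ expresses the adjugate, which is the transpose of the cofactor matrix, as
\[
\mathrm{adj}\left(M\right)=M^{2}-\left(\tr M\right)M+\frac{1}{2}\left[\left(\tr M\right)^{2}-\tr\left(M^{2}\right)\right]1.
\]
Reading the cofactor matrix as an endomorphism with the index placement above introduces one further transpose. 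This yields
\[
\left(\gamma^{*}\right)^{2}-\left(\tr\gamma\right)\gamma^{*}+\frac{1}{2}\left[\left(\tr\gamma\right)^{2}-\tr\left(\gamma^{2}\right)\right]1_{TY},
\]
since trace is invariant under transposition.

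The only real obstacle is bookkeeping: whether the transpose, and hence $\gamma^{*}$ rather than $\gamma$, appears, and the overall signs coming from the orientation and $\star$ conventions. I will fix these by checking the special case $\gamma=\theta$ against the Nahm pole identity $\star\frac{1}{2}\left[\theta\land\theta\right]=\theta$. Here the second formula gives $1-3+\frac{1}{2}\left(9-3\right)=1$, which is consistent. For the first formula, $\gamma=\theta$ gives $\star\left[\theta\land\theta\right]=2\theta$, matching $3\cdot1-1=2$.
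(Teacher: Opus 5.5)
Your proposal is correct and follows essentially the same route as the paper: a pointwise frame computation giving $(\tr\gamma)1_{TY}-\gamma^{*}$ for the first identity (you use the $\varepsilon$--$\delta$ contraction where the paper writes out the three components), graded symmetry for $\star\left[\gamma\land\theta\right]=\star\left[\theta\land\gamma\right]$, and, for the second identity, identifying $\star\frac{1}{2}\left[\gamma\land\gamma\right]$ with the cofactor matrix and applying Cayley--Hamilton. One minor caveat: the check $\gamma=\theta$ only confirms signs and normalizations and cannot detect $\gamma$ versus $\gamma^{*}$, because the identity is self-adjoint; this is harmless, since your index computation already fixes the transpose correctly.
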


\begin{proof}
Choose an $\SU\left(2\right)$ frame $s_{1},s_{2}$ of $\mathbb{S}$
and let $\theta^{1},\theta^{2},\theta^{3}$ be the corresponding $\SO\left(3\right)$
coframe.\\
(1) We have
\begin{align*}
\star\left[\theta\land\gamma\right]^{1} & =\star\left(\theta^{2}\land\gamma^{3}-\theta^{3}\land\gamma^{2}\right)=-\theta^{1}\gamma_{1}^{1}-\theta^{2}\gamma_{1}^{2}-\theta^{3}\gamma_{1}^{3}+\theta^{1}\tr\gamma\\
\star\left[\theta\land\gamma\right]^{2} & =\star\left(\theta^{3}\land\gamma^{1}-\theta^{1}\land\gamma^{3}\right)=-\theta^{1}\gamma_{2}^{1}-\theta^{2}\gamma_{2}^{2}-\theta^{3}\gamma_{2}^{3}+\theta^{2}\tr\gamma\\
\star\left[\theta\land\gamma\right]^{3} & =\star\left(\theta^{1}\land\gamma^{2}-\theta^{2}\land\gamma^{1}\right)=-\theta^{1}\gamma_{3}^{1}-\theta^{2}\gamma_{3}^{2}-\theta^{3}\gamma_{3}^{3}+\theta^{3}\tr\gamma
\end{align*}
from which we obtain
\[
\star\left[\theta\land\gamma\right]=\theta\tr\gamma-\gamma^{*}.
\]
Since the bracket on $\mathfrak{su}\left(\mathbb{S}\right)$ and the
wedge product of $1$-forms are both skew-symmetric operations, we
have $\star\left[\theta\land\gamma\right]=\star\left[\gamma\land\theta\right]$.\\
(2) We have
\[
\frac{1}{2}\star\left[\gamma\land\gamma\right]^{1}=\frac{1}{2}\star\left(\gamma^{2}\land\gamma^{3}-\gamma^{3}\land\gamma^{2}\right).
\]
Therefore,
\begin{align*}
\frac{1}{2}\star\left[\gamma\land\gamma\right]_{1}^{1} & =\gamma_{2}^{2}\gamma_{3}^{3}-\gamma_{2}^{3}\gamma_{3}^{2}\\
\frac{1}{2}\star\left[\gamma\land\gamma\right]_{2}^{1} & =\gamma_{3}^{2}\gamma_{1}^{3}-\gamma_{3}^{3}\gamma_{1}^{2}\\
\frac{1}{2}\star\left[\gamma\land\gamma\right]_{3}^{1} & =\gamma_{1}^{2}\gamma_{2}^{3}-\gamma_{1}^{3}\gamma_{2}^{2}.
\end{align*}
Call $C\left(\gamma\right)$ the endomorphism whose matrix form with
respect to the frame $\theta_{1},\theta_{2},\theta_{3}$ is the cofactor
matrix of the matrix of $\gamma$. Then a direct comparison shows
that
\[
\frac{1}{2}\star\left[\gamma\land\gamma\right]_{i}^{1}=C\left(\gamma\right)_{i}^{1}.
\]
Arguing similarly for the second and third columns, we obtain
\[
\star\left[\gamma\land\gamma\right]=C\left(\gamma\right).
\]
By the Cayley--Hamilton Theorem, we have
\[
C\left(\gamma\right)=\left(\gamma^{*}\right)^{2}-\left(\tr\gamma\right)\gamma^{*}+\frac{1}{2}\left[\left(\tr\gamma\right)^{2}-\tr\left(\gamma^{2}\right)\right]\id.
\]
\end{proof}
Another useful property of $\theta$ concerns the \emph{torsion} of
an $\SU\left(2\right)$ connection on $\mathbb{S}$. Due to the canonical
isomorphism $\mathfrak{su}\left(\mathbb{S}\right)\simeq\mathfrak{so}\left(TY\right)$,
there is a canonical 1-1 correspondence between $\SU\left(2\right)$
connections on $\mathbb{S}$ and $\SO\left(3\right)$ connections
on $TY$. Therefore, given an $\SU\left(2\right)$ connection $\alpha$
on $\mathbb{S}$, we can define its torsion by interpreting $\alpha$
as a connection on $TY$. Using the canonical identification $\Lambda^{2}\otimes TY\simeq\Lambda^{2}\otimes\mathfrak{su}\left(\mathbb{S}\right)$,
the torsion is $\mathfrak{su}\left(\mathbb{S}\right)$-valued $1$-form.
A direct computation shows that the torsion is precisely $d_{\alpha}\theta$,
the covariant derivative of the soldering form. Therefore, $d_{\alpha}\theta$
completely determines $\alpha$. More precisely:
\begin{lem}
\label{lem:connection-induced-by-torsion}Let $\alpha$ be an $\SU\left(2\right)$
connection on $\mathbb{S}$, determined by the torsion equation $d_{\alpha}\theta=\star T$
for some $\mathfrak{su}\left(\mathbb{S}\right)$-valued $1$-form
$T$. Let $\omega$ be the Levi-Civita spin connection on $\left(\mathbb{S},\cl\right)\to\left(Y,h\right)$.
Then
\begin{align*}
\alpha & =\omega+\sk T-\symmtf T+\frac{1}{6}\tr T\theta,
\end{align*}
where $\sk:\Lambda^{1}\otimes\Lambda^{1}\to\Lambda^{2}$ and $\symmtf:\Lambda^{1}\otimes\Lambda^{1}\to S_{0}^{2}$
are the canonical orthogonal projections, and we use the identification
$\Lambda^{1}\otimes\Lambda^{1}\simeq\Lambda^{1}\otimes\mathfrak{su}\left(\mathbb{S}\right)$
to interpret $\sk T$ and $\symmtf T$ as $\mathfrak{su}\left(\mathbb{S}\right)$-valued
$1$-forms.
\end{lem}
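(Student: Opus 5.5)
The proof reduces to linear algebra, using that the torsion map $\alpha\mapsto d_{\alpha}\theta$ is affine in $\alpha$. Write $\alpha=\omega+\gamma$ with $\gamma$ an $\mathfrak{su}\left(\mathbb{S}\right)$-valued $1$-form. Since $d_{\omega}\theta=0$ (the Levi-Civita spin connection is torsion-free, via the correspondence with $\SO\left(3\right)$ connections on $TY$ explained just before the lemma), we get $d_{\alpha}\theta=\left[\gamma\land\theta\right]$. The torsion equation $d_{\alpha}\theta=\star T$ then becomes, after applying $\star$ and using $\star^{2}=1$, the linear equation
\[
\star\left[\gamma\land\theta\right]=T .
\]

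By Lemma \ref{lem:useful}, the left-hand side equals $\left(\tr\gamma\right)1_{TY}-\gamma^{*}$, with $\gamma$ viewed as an endomorphism of $TY$ via the soldering form. So we must invert the linear map $L\gamma=\left(\tr\gamma\right)\id-\gamma^{*}$ on $\mathfrak{gl}\left(TY\right)$. Decompose $\gamma=\sk\gamma+\symmtf\gamma+\frac{1}{3}\left(\tr\gamma\right)\id$. Adjunction acts by $-1$ on the skew part and by $+1$ on the symmetric parts, and only the pure-trace part has nonzero trace. Hence $L$ acts on the three components as follows:
\begin{align*}
\sk\gamma & \mapsto\sk\gamma,\\
\symmtf\gamma & \mapsto-\symmtf\gamma,\\
\frac{1}{3}\left(\tr\gamma\right)\id & \mapsto\left(\tr\gamma-\frac{1}{3}\tr\gamma\right)\id=\frac{2}{3}\left(\tr\gamma\right)\id .
\end{align*}
Matching components with $T$ gives $\sk\gamma=\sk T$, $\symmtf\gamma=-\symmtf T$, and $\frac{2}{3}\tr\gamma=\frac{1}{3}\tr T$, i.e. $\tr\gamma=\frac{1}{2}\tr T$. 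The trace component of $\gamma$ is therefore $\frac{1}{3}\cdot\frac{1}{2}\left(\tr T\right)\id=\frac{1}{6}\left(\tr T\right)\theta$, since $\theta=\id$ under the identification. This yields the claimed formula, and the inversion also proves uniqueness: $L$ is invertible, so the torsion determines $\alpha$.

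The only point needing care is sign and convention bookkeeping. First, one must check that $d_{\alpha}\theta=d_{\omega}\theta+\left[\gamma\land\theta\right]$ holds with the bracket convention used in Lemma \ref{lem:useful}, where $\left[\gamma\land\theta\right]=\left[\theta\land\gamma\right]$. Second, one must check that $\sk$ and $\symmtf$, read as projections $\Lambda^{1}\otimes\Lambda^{1}\to\Lambda^{2},S_{0}^{2}$, agree with the endomorphism-level decomposition. I would verify both in a local frame $\theta^{1},\theta^{2},\theta^{3}$, exactly as in the proof of Lemma \ref{lem:useful}. As a sanity check, the case $T=0$ must recover $\alpha=\omega$.
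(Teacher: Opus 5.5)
Your proposal is correct and is essentially the paper's proof: write $\alpha=\omega+\gamma$, use torsion-freeness of $\omega$ to reduce to $\star\left[\gamma\land\theta\right]=T$, apply Lemma \ref{lem:useful} to get $\left(\tr\gamma\right)\id-\gamma^{*}=T$, and match the skew, symmetric trace-free and trace components. Your observation that this map acts by $1,-1,2$ on the three components is a small but welcome addition, since it makes explicit the invertibility, and hence the uniqueness of $\alpha$, that the paper leaves implicit.
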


\begin{proof}
Write $\alpha=\omega+\gamma$ for some $\mathfrak{su}\left(\mathbb{S}\right)$-valued
$1$-form $\gamma$. Decompose
\[
\gamma=\sk\gamma+\symmtf\gamma+\frac{1}{3}\tr\gamma\theta.
\]
Since $\omega$ is torsion-free, the equation $d_{\alpha}\theta=\star T$
simplifies to $\left[\gamma\land\theta\right]=\star T$. Applying
$\star$ and using Lemma \ref{lem:useful}, we then obtain
\[
\theta\tr\gamma-\gamma^{*}=T.
\]
From this, we obtain
\begin{align*}
\sk\gamma & =\sk T\\
\symmtf\gamma & =-\symmtf T\\
\tr\gamma & =\frac{1}{2}\tr T.
\end{align*}
\end{proof}

\subsection{\label{subsec:Geometry-of-4-manifolds-in-a-geodesic-collar}Geometry
of 4-manifolds in a geodesic collar}

We now consider a manifold $X^{4}=\left[0,1\right]\times Y^{3}$,
where $Y^{3}$ is a closed oriented 3-manifold, and we consider a
metric on $X$ of the form
\[
\overline{g}=dx^{2}+h\left(x\right),
\]
where $h\left(x\right)$ is a smooth family $\left[0,1\right]\to C^{\infty}\left(Y;S^{2}\left(T^{*}Y\right)\right)$
of metrics on $Y$. Denote by $\sharp\left(x\right),\flat\left(x\right)$
the musical isomorphisms of $h\left(x\right)$, and by $\star\left(x\right)$
the Hodge star of $h\left(x\right)$. We call $\partial_{x}$ the
$\overline{g}$-gradient of $x$. We orient $X^{4}$ in such a way
that, if $e_{1},e_{2},e_{3}$ is a positively oriented frame of $TY$,
then $-\partial_{x},e_{1},e_{2},e_{3}$ is a positively oriented frame
of $TX$.

In what follows, all the ``overlined'' symbols refer to curvature
tensors / geometric operators on $X$ associated to the metric $\overline{g}$,
while the ``non-overlined'' symbols refer to \emph{families} of
curvature tensors / geometric operators on $Y$ associated to the
family of metrics $h\left(x\right)$. In particular, we denote by
$\overline{\nabla}$, $\overline{\R}$, $\overline{\Ric}$, $\overline{\s}$
the Levi-Civita connection, Riemann curvature tensor, Ricci curvature,
and scalar curvature of $\left(X,\overline{g}\right)$. Then the symbols
$\nabla$, $\R$, $\Ric$, $\s$ refer to \emph{families} of the analogous
objects dependent on $x\in\left[0,1\right]$ on $\left(Y,h\left(x\right)\right)$.
Our conventions for the curvature tensors are
\begin{align*}
\overline{\R}\left(V_{1},V_{2}\right)V_{3} & =\overline{\nabla}_{V_{1}}\overline{\nabla}_{V_{2}}V_{3}-\overline{\nabla}_{V_{2}}\overline{\nabla}_{V_{1}}V_{3}-\overline{\nabla}_{\left[V_{1},V_{2}\right]}V_{3}\\
\overline{\Ric}\left(V_{1},V_{2}\right) & =\tr\left(W\mapsto\overline{\R}\left(W,V_{1}\right)V_{2}\right)\\
\overline{\s} & =\tr_{\overline{g}}\overline{\Ric}.
\end{align*}
Whenever we have a family $\omega=\omega\left(x\right)$ of symmetric
$2$-tensors on $Y$, we denote by $\omega^{\sharp}$ the family of
endomorphisms of $TY$ obtained from $\omega$ by raising an index
using $h$:
\[
\left\langle \omega^{\sharp}V,W\right\rangle _{h}=\omega\left(V,W\right).
\]
Whenever we have an endomorphism $\phi:TY\to TY$, we denote by $\phi^{\star}:\Lambda^{1}\to\Lambda^{1}$
the dual map.

In addition to the intrinsic objects $\nabla$, $\R$, $\Ric$, $\s$
in $\left(Y,h\right)$, we define the following extrinsic invariants:
\begin{enumerate}
\item the \emph{shape operator}, i.e. the family of endomorphisms $\shape=\shape\left(x\right):TY\to TY$
given by $\shape V:=-\overline{\nabla}_{V}\partial_{x}$;
\item the \emph{mean curvature}, i.e. the family of traces of the shape
operator: $\HH=\HH\left(x\right):=\tr\shape\left(x\right)$;
\item the\emph{ normal curvature operator}, i.e. the family of endomorphisms
$\overline{\R}^{N}=\overline{\R}^{N}\left(x\right):TY\to TY$ given
by $\overline{\R}^{N}V:=\overline{\R}\left(V,\partial_{x}\right)\partial_{x}$;
\item the \emph{normal Ricci curvature}, i.e. the family of $h$-traces
of the normal curvature operator: $\overline{\Ric}^{N}:=\tr_{h}\overline{\R}^{N}=\overline{\Ric}\left(\partial_{x},\partial_{x}\right)$.
\end{enumerate}
For every $x$, the operators $\shape\left(x\right)$ and $\overline{\R}^{N}\left(x\right)$
are both self-adjoint with respect to $h\left(x\right)$.

The next lemma collects various useful computations discussing the
first and second variations of the geometric operators associated
to $\left(Y,h\right)$.
\begin{lem}
\label{lem:computations-in-collar}Let $V_{1},V_{2}$ be vector fields
on $Y$ constant in $x$, and let $\alpha_{1},\alpha_{2}$ be $1$-forms
on $Y$ constant in $x$. Denote by $\star=\star\left(x\right)$ the
Hodge star $\Lambda^{1}\to\Lambda^{2}$ of $h\left(x\right)$.
\begin{enumerate}
\item $\partial_{x}\left\langle V_{1},V_{2}\right\rangle _{h}=-2\left\langle \shape V_{1},V_{2}\right\rangle _{h}$;
\item if $\alpha\left(x\right)$ is a smooth family of $1$-forms on $Y$,
then $\dot{\alpha}=\overline{\nabla}_{\partial_{x}}\alpha-\shape^{\star}\alpha$;
\item $\partial_{x}\left\langle \alpha_{1},\alpha_{2}\right\rangle _{h}=2\left\langle \shape^{\star}\alpha_{1},\alpha_{2}\right\rangle _{h}$;
\item $\partial_{x}\shape=\shape^{2}+\overline{\R}^{N}$;
\item $\partial_{x}^{2}\left\langle V_{1},V_{2}\right\rangle _{h}=2\left\langle \left(\shape^{2}-\overline{\R}^{N}\right)V_{1},V_{2}\right\rangle _{h}$;
\item $\partial_{x}\dVol_{h}=-\HH\dVol_{h}$;
\item $\partial_{x}\star=\star\left(2\shape^{\star}-\HH\right)$;
\item $\partial_{x}\shape^{\star}=\left(\shape^{\star}\right)^{2}+\overline{\R}^{N\star}$;
\item $\partial_{x}\HH=\tr\shape^{2}+\overline{\Ric}^{N}$;
\item $\partial_{x}^{2}\star=\star\left(6\left(\shape^{\star}\right)^{2}-4\shape^{\star}\HH+\HH^{2}+2\overline{\R}^{N\star}-\tr\shape^{2}-\overline{\Ric}^{N}\right)$.
\end{enumerate}
\end{lem}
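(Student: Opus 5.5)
The plan is to work in an adapted local coframe and compute all ten items by direct differentiation, deriving each from the previous ones. Fix a point of $Y$ and coordinates on $Y$, and write $h(x)=h_{ij}(x)dy^{i}dy^{j}$. Since $\partial_{x}$ is the $\overline{g}$-gradient of $x$ and $\overline{g}=dx^{2}+h(x)$, the Koszul formula applied to coordinate fields $V_{1},V_{2}$ constant in $x$ gives $\langle\overline{\nabla}_{V_{1}}\partial_{x},V_{2}\rangle=\frac{1}{2}\partial_{x}h(V_{1},V_{2})$. With $\shape V=-\overline{\nabla}_{V}\partial_{x}$ this is (1), i.e. $\partial_{x}h=-2h(\shape\cdot,\cdot)$; in particular $h_{1}=-2\II$, as used in Remark \ref{rem:asymptotically-PE-equals-condition-on-h1-and-h2}.

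For (2), a family $\alpha(x)$ of $1$-forms on $Y$ is a $1$-form on $X$ annihilating $\partial_{x}$. Evaluate it on constant $V$: since $[\partial_{x},V]=0$, we have $\overline{\nabla}_{\partial_{x}}V=\overline{\nabla}_{V}\partial_{x}=-\shape V$. Hence $\dot{\alpha}(V)=(\overline{\nabla}_{\partial_{x}}\alpha)(V)+\alpha(\overline{\nabla}_{\partial_{x}}V)=(\overline{\nabla}_{\partial_{x}}\alpha)(V)-(\shape^{\star}\alpha)(V)$. One must check that $\overline{\nabla}_{\partial_{x}}\alpha$ still annihilates $\partial_{x}$; this holds because $\overline{\nabla}_{\partial_{x}}\partial_{x}=0$, since the $x$-lines are geodesics. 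Item (3) is the dual of (1): the inverse metric satisfies $\partial_{x}h^{-1}=-h^{-1}(\partial_{x}h)h^{-1}$. Item (6) follows from (1), since $\partial_{x}\log\sqrt{\det h}=\frac{1}{2}\tr(h^{-1}\partial_{x}h)=-\HH$.

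For (4), differentiate $\shape V=-\overline{\nabla}_{V}\partial_{x}$ along $\partial_{x}$. Using $\overline{\nabla}_{\partial_{x}}\partial_{x}=0$ and the commutation $[\partial_{x},V]=0$,
\[
\overline{\nabla}_{\partial_{x}}(\shape V)=-\overline{\nabla}_{V}\overline{\nabla}_{\partial_{x}}\partial_{x}-\overline{\R}(\partial_{x},V)\partial_{x}=\overline{\R}(V,\partial_{x})\partial_{x}=\overline{\R}^{N}V.
\]
Now convert the covariant derivative into the $x$-derivative of the endomorphism, writing $\overline{\nabla}_{\partial_{x}}(\shape V)=(\partial_{x}\shape)V+\ldots$. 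The correction term comes from the same relation $\overline{\nabla}_{\partial_{x}}W=\dot{W}-\shape W$ for tangent $W$, and it produces the $\shape^{2}$ term. Sign care is the only subtlety here. Items (5), (8) and (9) then follow: differentiate (1) once more and combine with (4); dualize; and take traces.

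For (7), I would avoid coordinates and use an orthonormal coframe $b^{i}(x)$ of $h(x)$ evolving by $\dot{b}^{i}=-\shape^{\star}b^{i}$. This evolution preserves orthonormality by (3). Then $\star(b^{1})=b^{2}\wedge b^{3}$ holds for all $x$. Applying this to a constant form $\beta$, write $\beta=c_{i}(x)b^{i}$ with $\dot{c}=$ (the matrix of $\shape^{\star}$)$\,c$. Differentiating $c_{1}b^{2}\wedge b^{3}+\cdots$ gives $\star(\shape^{\star}\beta)$ from the coefficients, and $-\star(\HH\beta-\shape^{\star}\beta)$ from the wedge factors. The latter uses the identity $\shape^{\star}b^{2}\wedge b^{3}+b^{2}\wedge\shape^{\star}b^{3}=\star\big((\tr\shape-\shape^{\star})b^{1}\big)$, valid for self-adjoint $\shape$. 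Summing gives $\star(2\shape^{\star}-\HH)$.

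Finally, (10) is obtained by differentiating (7), substituting (7), (8) and (9), and collecting terms:
\[
\partial_{x}^{2}\star=\star(2\shape^{\star}-\HH)^{2}+\star\big(2(\shape^{\star})^{2}+2\overline{\R}^{N\star}-\tr\shape^{2}-\overline{\Ric}^{N}\big),
\]
which expands to the stated formula.

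The main obstacle is bookkeeping of signs and orientation: the convention $\shape=-\overline{\nabla}\partial_{x}$, the orientation $-\partial_{x},e_{1},e_{2},e_{3}$, and the conversion between covariant and ordinary $x$-derivatives in (4) and (7). I would verify each item on the model $h(x)=(1+x)^{2}h_{0}$ with $h_{0}$ flat, where $\shape=-(1+x)^{-1}$ and $\overline{\R}^{N}=0$.
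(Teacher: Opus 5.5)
Your proposal is correct. For items (1)--(5) and (8)--(10) it is essentially the paper's computation. Using the Koszul formula in (1), and $\partial_x h^{-1}=-h^{-1}(\partial_x h)h^{-1}$ in (3), are cosmetic variants of the paper's use of metric compatibility with $\overline{\nabla}_{\partial_x}V=\overline{\nabla}_V\partial_x$, and of item (2) applied to a constant form, respectively. Your check in (2) that $\overline{\nabla}_{\partial_x}\alpha$ still annihilates $\partial_x$ is a detail the paper leaves implicit.

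The real differences are in (6) and (7). For (6) the paper just cites Gray, whereas your $\partial_x\log\sqrt{\det h}=\frac{1}{2}\tr(h^{-1}\partial_x h)=-\HH$ makes the lemma self-contained.

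For (7) the paper uses no frames. It differentiates $\alpha_1\wedge\star\alpha_2=\langle\alpha_1,\alpha_2\rangle_h\dVol_h$ for constant $\alpha_1,\alpha_2$ and inserts (3) and (6). It then moves $\shape^\star$ across the pairing by self-adjointness and concludes by nondegeneracy of $\Lambda^1\times\Lambda^2\to\Lambda^3$. That is shorter than your route, which needs three extra ingredients:
\begin{itemize}
\item the evolving coframe;
\item preservation of orthonormality and orientation;
\item the algebraic identity $\shape^\star b^2\wedge b^3+b^2\wedge\shape^\star b^3=\star\big((\tr\shape-\shape^\star)b^1\big)$, which does hold for self-adjoint $\shape$.
\end{itemize}
What your version buys is transparency about where the $-\HH$ term comes from. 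Also, your coframe $\dot b^i=-\shape^\star b^i$, i.e.\ $\overline{\nabla}_{\partial_x}b^i=0$, is exactly the parallel coframe the paper later uses to compute the boundary value of $\nabla^+$.
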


\begin{proof}
(1) We have
\begin{align*}
\partial_{x}\left\langle V_{1},V_{2}\right\rangle _{h} & =\partial_{x}\left\langle V_{1},V_{2}\right\rangle _{\overline{g}}\\
 & =\left\langle \overline{\nabla}_{\partial_{x}}V_{1},V_{2}\right\rangle _{\overline{g}}+\left\langle V_{1},\overline{\nabla}_{\partial_{x}}V_{2}\right\rangle _{\overline{g}}\\
 & =\left\langle \overline{\nabla}_{V_{1}}\partial_{x},V_{2}\right\rangle _{\overline{g}}+\left\langle V_{1},\overline{\nabla}_{V_{2}}\partial_{x}\right\rangle _{\overline{g}}\\
 & =-\left\langle \shape V_{1},V_{2}\right\rangle _{\overline{g}}-\left\langle V_{1},\shape V_{2}\right\rangle _{\overline{g}}\\
 & =-2\left\langle \shape V_{1},V_{2}\right\rangle _{h}.
\end{align*}
(2) If $V$ is a vector field on $Y$ (constant in $x$), then
\begin{align*}
\dot{\alpha}\left(V\right) & =\partial_{x}\left(\alpha\left(V\right)\right)\\
 & =\left(\overline{\nabla}_{\partial_{x}}\alpha\right)\left(V\right)+\alpha\left(\overline{\nabla}_{\partial_{x}}V\right)\\
 & =\left(\overline{\nabla}_{\partial_{x}}\alpha\right)\left(V\right)+\left\langle \overline{\nabla}_{\partial_{x}}V,\alpha^{\sharp}\right\rangle _{h}\\
 & =\left(\overline{\nabla}_{\partial_{x}}\alpha\right)\left(V\right)-\left\langle \shape V,\alpha^{\sharp}\right\rangle _{h}\\
 & =\left(\overline{\nabla}_{\partial_{x}}\alpha\right)\left(V\right)-\left\langle V,\shape\alpha^{\sharp}\right\rangle _{h}\\
 & =\left(\overline{\nabla}_{\partial_{x}}\alpha\right)\left(V\right)-\left(\shape^{\star}\alpha^{\sharp}\right)\left(V\right).
\end{align*}
From the generality of $V$, we have
\[
\dot{\alpha}=\overline{\nabla}_{\partial_{x}}\alpha-\shape^{\star}\alpha.
\]
(3) Since $\alpha_{1}$ is independent of $x$, from the previous
point we have $\overline{\nabla}_{\partial_{x}}\alpha_{1}=\shape^{\star}\alpha_{1}$.
Therefore,
\begin{align*}
\partial_{x}\left\langle \alpha_{1},\alpha_{2}\right\rangle _{h} & =\left\langle \overline{\nabla}_{\partial_{x}}\alpha_{1},\alpha_{2}\right\rangle _{\overline{g}}+\left\langle \alpha_{1},\overline{\nabla}_{\partial_{x}}\alpha_{2}\right\rangle _{\overline{g}}\\
 & =\left\langle \shape^{\star}\alpha_{1},\alpha_{2}\right\rangle _{\overline{g}}+\left\langle \alpha_{1},\shape^{\star}\alpha_{2}\right\rangle _{\overline{g}}\\
 & =2\left\langle \shape^{\star}\alpha_{1},\alpha_{2}\right\rangle _{h}.
\end{align*}
(4) This equation is known as the Riccati Equation, cf. Corollary
3.3 of \cite{GrayTubes}. Alternatively, if $V$ is a vector field
on $Y$, we have
\begin{align*}
\left(\partial_{x}\shape\right)\left(V\right) & =\left[\partial_{x},\shape V\right]-\shape\left(\left[\partial_{x},V\right]\right)\\
 & =\overline{\nabla}_{\partial_{x}}\shape V-\overline{\nabla}_{\shape V}\partial_{x}\\
 & =-\overline{\nabla}_{\partial_{x}}\overline{\nabla}_{V}\partial_{x}+\shape^{2}V\\
 & =\shape^{2}V+\overline{\R}^{N}V.
\end{align*}
(5) We have
\begin{align*}
\partial_{x}^{2}\left\langle V_{1},V_{2}\right\rangle _{h} & =\partial_{x}\left(-2\left\langle \shape V_{1},V_{2}\right\rangle _{h}\right)\\
 & =-2\left\langle \overline{\nabla}_{\partial_{x}}\left(\shape V_{1}\right),V_{2}\right\rangle _{\overline{g}}-2\left\langle \shape V_{1},\overline{\nabla}_{\partial_{x}}V_{2}\right\rangle _{\overline{g}}\\
 & =-2\left\langle \overline{\nabla}_{\shape V_{1}}\partial_{x}+\left[\partial_{x},\shape V_{1}\right],V_{2}\right\rangle _{\overline{g}}+2\left\langle \shape V_{1},\shape V_{2}\right\rangle _{h}\\
 & =-2\left\langle -\shape^{2}V_{1}+\left(\partial_{x}\shape\right)V_{1},V_{2}\right\rangle _{h}+2\left\langle \shape^{2}V_{1},V_{2}\right\rangle _{h}\\
 & =2\left\langle 2\shape^{2}V_{1}-\left(\shape^{2}+\overline{\R}^{N}\right)V_{1},V_{2}\right\rangle _{h}\\
 & =2\left\langle \left(\shape^{2}-\overline{\R}^{N}\right)V_{1},V_{2}\right\rangle _{h}.
\end{align*}
(6) Theorem 3.11 of \cite{GrayTubes}.\\
(7) We have
\begin{align*}
\alpha_{1}\land\left(\partial_{x}\star\right)\alpha_{2} & =\partial_{x}\left(\alpha_{1}\land\star\alpha_{2}\right)\\
 & =\partial_{x}\left[\left\langle \alpha_{1},\alpha_{2}\right\rangle _{h}\dVol_{h}\right]\\
 & =2\left\langle \shape^{\star}\alpha_{1},\alpha_{2}\right\rangle _{h}\dVol_{h}-\left\langle \alpha_{1},\alpha_{2}\right\rangle _{h}\HH\dVol_{h}\\
 & =2\left\langle \alpha_{1},\shape^{\star}\alpha_{2}\right\rangle _{h}\dVol_{h}-\left\langle \alpha_{1},\alpha_{2}\right\rangle _{h}\HH\dVol_{h}\\
 & =\alpha_{1}\land\star\left(2\shape^{\star}-\HH\right)\alpha_{2}.
\end{align*}
From the generality of $\alpha_{1},\alpha_{2}$, we obtain $\partial_{x}\star=\star\left(2\shape^{\star}-\HH\right)$.\\
(8) Let $\alpha$ be a $1$-form on $Y$, and let $V$ be a vector
field on $Y$. We have
\begin{align*}
\partial_{x}\left[\left(\shape^{\star}\alpha\right)\left(V\right)\right] & =\partial_{x}\left[\alpha\left(\shape V\right)\right]\\
 & =\left(\mathcal{L}_{\partial_{x}}\alpha\right)\left(\shape V\right)+\alpha\left(\left(\partial_{x}\shape\right)V+\shape\left[\partial_{x},V\right]\right)\\
 & =\alpha\left(\left(\shape^{2}+\overline{\R}^{N}\right)V\right)\\
 & =\left[\left(\left(\shape^{\star}\right)^{2}+\overline{\R}^{N\star}\right)\alpha\right]\left(V\right).
\end{align*}
On the other hand,
\begin{align*}
\partial_{x}\left[\left(\shape^{\star}\alpha\right)\left(V\right)\right] & =\left(\left(\partial_{x}\shape^{\star}\right)\alpha\right)\left(V\right)+\shape^{\star}\left(\mathcal{L}_{\partial_{x}}\alpha\right)\left(V\right)+\left(\shape^{\star}\alpha\right)\left(\left[\partial_{x},V\right]\right)\\
 & =\left(\left(\partial_{x}\shape^{\star}\right)\alpha\right)\left(V\right).
\end{align*}
In these computations, we used the fact that, since $\alpha$ and
$V$ do not depend on $x$, we have $\mathcal{L}_{\partial_{x}}\alpha=0$
and $\left[\partial_{x},V\right]=0$. From the generality of $\alpha$
and $V$, it follows that $\partial_{x}\shape^{\star}=\left(\shape^{\star}\right)^{2}+\overline{\R}^{N\star}$.\\
(9) We have
\begin{align*}
\partial_{x}\HH & =\partial_{x}\tr\shape\\
 & =\tr\partial_{x}\shape\\
 & =\tr\left(\shape^{2}+\overline{\R}^{N}\right)\\
 & =\tr\left(\shape^{2}\right)+\overline{\Ric}^{N}.
\end{align*}
(10) From $\partial_{x}\star=\star\left(2\shape^{\star}-\HH\right)$,
we have
\begin{align*}
\partial_{x}^{2}\star & =\left(\partial_{x}\star\right)\left(2\shape^{\star}-\HH\right)+\star\left(2\partial_{x}\shape^{\star}-\partial_{x}\HH\right)\\
 & =\star\left(2\shape^{\star}-\HH\right)^{2}+\star\left(2\left(\shape^{\star}\right)^{2}+2\overline{\R}^{N\star}-\tr\shape^{2}-\overline{\Ric}^{N}\right)\\
 & =\star\left(6\left(\shape^{\star}\right)^{2}-4\shape^{\star}\HH+\HH^{2}+2\overline{\R}^{N\star}-\tr\shape^{2}-\overline{\Ric}^{N}\right).
\end{align*}
\end{proof}
We now consider an algebraic decomposition of the Weyl tensor $\overline{\W}$
of $\left(X,\overline{g}\right)$ along the fibers of $x:X\to\left[0,1\right]$.
We think of $\overline{\W}$ as a tensor of type (1,3). Moreover,
we denote by $*\overline{\W}$ the tensor of type (1,3) whose index
expression in terms of an $\SO\left(4\right)$ frame is
\[
\left(*\overline{\W}\right)_{abc}{^{d}}=\frac{1}{2}\varepsilon_{ab}{^{\alpha\beta}}\overline{\W}_{\alpha\beta c}{^{d}}.
\]
As usual, we define
\begin{align*}
\overline{\W}^{+} & =\frac{\overline{\W}+*\overline{\W}}{2}\\
\overline{\W}^{-} & =\frac{\overline{\W}-*\overline{\W}}{2}.
\end{align*}

\begin{defn}
$ $
\begin{enumerate}
\item The \emph{electric Weyl endomorphism }is the family $\overline{\W}^{E}=\overline{\W}^{E}\left(x\right)$
of endomorphisms of $TY$ defined by
\[
\overline{\W}^{E}V=\overline{\W}\left(V,\partial_{x}\right)\partial_{x}.
\]
\item The \emph{magnetic Weyl endomorphism }is the family $\overline{\W}^{B}=\overline{\W}^{B}\left(x\right)$
of endomorphisms of $TY$ defined by
\[
\overline{\W}^{B}=\left(*\overline{\W}\right)^{E}.
\]
\end{enumerate}
\end{defn}

The two components $\overline{\W}^{E}$ and $\overline{\W}^{B}$ can
be related to the intrinsic and extrinsic invariants described above,
via the Gauss and Codazzi equations. In order to formulate these identities,
we recall that on any closed oriented Riemannian 3-manifold $\left(Z,\eta\right)$,
there is a natural formally self-adjoint first order differential
operator acting on sections of the bundle of symmetric $2$-tensors,
called the \emph{symmetric} \emph{curl}. It is easier to describe
it as an operator acting on self-adjoint endomorphisms of $TZ$, using
the soldering form $\theta$, interpreted here as a canonical section
of $T^{*}Z\otimes\mathfrak{so}\left(TZ\right)$. If $\mathcal{S}:TY\to TY$
is self-adjoint, then
\[
\left(\symmtf\curl\mathcal{S}\right)^{\sharp}=\symmtf\star d_{\nabla}\left(\mathcal{S}^{\star}\theta\right)
\]
where $\mathcal{S}^{\star}:T^{*}Z\to T^{*}Z$ is the dual map. Here
$\star d_{\nabla}\left(\mathcal{S}^{\star}\theta\right)$ is again
a section of $T^{*}Z\otimes\mathfrak{so}\left(TZ\right)\equiv T^{*}Z\otimes TZ$,
so it can be decomposed into its self-adjoint trace-free, skew-adjoint
and pure trace parts.

We can now come back to our formulas for $\overline{\W}^{E}$ and
$\overline{\W}^{B}$:
\begin{prop}
\label{prop:electric-magnetic-Weyl}We have
\begin{align*}
\overline{\W}^{E} & =\frac{1}{2}\tf\left(\shape^{2}-\overline{\R}^{N}+\Ric^{\sharp}-\HH\shape\right)\\
\overline{\W}^{B} & =\symmtf\curl\shape.
\end{align*}
\end{prop}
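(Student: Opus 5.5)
We work in an adapted orthonormal frame and compute $\overline{\W}^{E}$ and $\overline{\W}^{B}$ from the Gauss and Codazzi equations for the level sets $\{x=c\}$ in $(X,\overline{g})$. The Weyl tensor in dimension four is
\[
\overline{\W}=\overline{\R}-\overline{\PP}\owedge\overline{g},\qquad \overline{\PP}=\tfrac{1}{2}\left(\overline{\Ric}-\tfrac{\overline{\s}}{6}\overline{g}\right).
\]
Since $\overline{\W}$ is totally trace-free, $\overline{\W}^{E}$ is automatically self-adjoint and trace-free on $TY$, so it suffices to compute the trace-free part of $\overline{\R}^{N}-(\overline{\PP}\owedge\overline{g})^{N}$.

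\textbf{Electric part.} First, $(\overline{\PP}\owedge\overline{g})(V,\partial_{x})\partial_{x}=\overline{\PP}^{\sharp}V+\overline{\PP}(\partial_{x},\partial_{x})V$ for $V\in TY$. Its trace-free part is $\tfrac{1}{2}\tf(\overline{\Ric}|_{TY})^{\sharp}$. Next, the Gauss equation gives
\[
\overline{\Ric}(V,W)=\Ric(V,W)+\left\langle \overline{\R}^{N}V,W\right\rangle -\HH\left\langle \shape V,W\right\rangle +\left\langle \shape^{2}V,W\right\rangle ,
\]
with signs fixed by $\shape V=-\overline{\nabla}_{V}\partial_{x}$ and the curvature conventions of the paper. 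Substituting, one gets
\[
\overline{\W}^{E}=\tf\left(\overline{\R}^{N}-\tfrac{1}{2}\left(\Ric^{\sharp}+\overline{\R}^{N}+\shape^{2}-\HH\shape\right)\right).
\]
The main bookkeeping risk is this sign: the target formula has $-\overline{\R}^{N}+\Ric^{\sharp}$, which suggests that the Gauss equation, with the paper's $\overline{\R}$ convention, enters as $\overline{\Ric}|_{TY}=\Ric+\shape^{2}-\HH\shape-\overline{\R}^{N}$ up to the sign of $\overline{\R}^{N}$. We will recheck this on the model $h(x)=(1-x^{2}/2\cdot k)h_{0}$, or simply on a product metric with $\shape=0$. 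On a product metric, $\overline{\W}^{E}=-\tfrac{1}{2}\mathring{\Ric}$ for the standard convention; this calibrates the overall sign of the $\Ric^{\sharp}$ term before matching the rest.

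\textbf{Magnetic part.} We have $\overline{\W}^{B}V=\tfrac{1}{2}\varepsilon(V,\partial_{x},\cdot,\cdot)$ contracted with $\overline{\W}(\cdot,\cdot)\partial_{x}$. With the orientation $(-\partial_{x},e_{1},e_{2},e_{3})$, this becomes
\[
\overline{\W}^{B}V=\pm\tfrac{1}{2}\textstyle\sum\overline{\W}(\star_{h}V^{\flat})\partial_{x},
\]
i.e.\ it involves only the mixed components $\overline{\R}(e_{i},e_{j})\partial_{x}$. The $\overline{\PP}\owedge\overline{g}$ correction contributes terms of the form $\overline{\PP}(e_{i},\partial_{x})e_{j}$. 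Tangential to $TY$, these give an antisymmetric piece of type $\star\overline{\Ric}(\partial_{x},\cdot)$, which is skew-adjoint and hence killed by the symmetric trace-free projection. Since $\overline{\W}^{B}$ is itself symmetric and trace-free, we may take $\symmtf$ throughout. By Codazzi, $\overline{\R}(V,W)\partial_{x}=-\left(\nabla_{V}\shape\right)W+\left(\nabla_{W}\shape\right)V$. Contracting with $\star$ gives exactly $\star d_{\nabla}(\shape^{\star}\theta)$, by the definition of exterior covariant derivative of the $TY$-valued $1$-form $\shape$ (identified via $\theta$). Hence $\overline{\W}^{B}=\symmtf\curl\shape$, up to a sign. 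That sign is fixed by the orientation convention; the same convention is used in the proof of Theorem~\ref{thm:0-instanton-obstruction-tensor-conf-invariant}, and we will check it there for consistency.

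The main obstacle is purely sign and convention tracking: the curvature sign in $\overline{\R}^{N}$, the Stokes orientation, and the normalization of $*\overline{\W}$. We will fix all of these once using a frame computation at a point with $\shape=0$ for the electric part, and with a simple nonzero $\shape$ for the magnetic part.
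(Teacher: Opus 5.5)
The paper's proof is a citation: it transcribes the two formulas on p.~49 of Fefferman--Graham and handles the signs through the index convention $\overline{\W}^{E}=\overline{\W}_{a00}{}^{b}$. You instead derive them from the decomposition of $\overline{\R}$ into $\overline{\W}$ plus the Kulkarni--Nomizu product of $\overline{\PP}$ and $\overline{g}$, together with Gauss and Codazzi. That is a reasonable self-contained alternative, and your structural steps are right. Once the structure is known, however, the content of the proposition is its signs, and there is a genuine gap there. The one sign you actually compute contradicts the statement, and neither your suggested change to the Gauss equation nor a later consistency check can fix it.

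Your electric formula simplifies to $\overline{\W}^{E}=\frac{1}{2}\tf\left(\overline{\R}^{N}-\Ric^{\sharp}+\HH\shape-\shape^{2}\right)$, which is exactly the negative of the displayed one. Your Gauss equation $\overline{\Ric}|_{TY}=\Ric+\overline{\R}^{N}-\HH\shape+\shape^{2}$ is correct: the normal term is $\langle\overline{\R}(\partial_{x},V)W,\partial_{x}\rangle=\langle\overline{\R}^{N}V,W\rangle$ by pair symmetry. You can check it on the round $S^{4}$ written as $dx^{2}+\cos^{2}x\,g_{S^{3}}$, where $\shape=\tan x$, $\overline{\R}^{N}=1$, $\Ric^{\sharp}=2\sec^{2}x$, and indeed $2\sec^{2}x+1-3\tan^{2}x+\tan^{2}x=3$. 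Flipping the sign of $\overline{\R}^{N}$ in Gauss would give $\frac{1}{2}\tf\left(3\overline{\R}^{N}-\Ric^{\sharp}+\HH\shape-\shape^{2}\right)$, which is not the target either. Your product-metric calibration exhibits the discrepancy rather than removing it. For $\overline{g}=dx^{2}+h_{0}$ one gets $\overline{\W}(V,\partial_{x})\partial_{x}=-\frac{1}{2}\mathring{\Ric}^{\sharp}V$, while the displayed formula gives $+\frac{1}{2}\mathring{\Ric}^{\sharp}$. The mismatch is an overall sign, i.e.\ a convention. With $\overline{\R}$ as defined in the paper, the displayed identity is the one satisfied by $V\mapsto\overline{\W}(\partial_{x},V)\partial_{x}=-\overline{\W}(V,\partial_{x})\partial_{x}$.

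For the magnetic part, deferring the sign to a consistency check in the proof of Theorem~\ref{thm:0-instanton-obstruction-tensor-conf-invariant} is circular, since that proof uses this proposition. The computation is short:
\begin{itemize}
\item With $(-\partial_{x},e_{1},e_{2},e_{3})$ positively oriented, $\varepsilon(e_{1},\partial_{x},e_{2},e_{3})=1$, so $(*\overline{\W})(e_{1},\partial_{x})\partial_{x}=\overline{\W}(e_{2},e_{3})\partial_{x}$.
\item Its tangential part is $-(d_{\nabla}\shape)(e_{2},e_{3})$, plus Schouten terms that assemble into a skew-adjoint endomorphism.
\item On the other hand, $\star d_{\nabla}(\shape^{\star}\theta)$ sends $e_{1}$ to $+(d_{\nabla}\shape)(e_{2},e_{3})$.
\end{itemize}
So the literal reading gives $\overline{\W}^{B}=-\symmtf\curl\shape$. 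Again the displayed sign is that of the convention $V\mapsto(*\overline{\W})(\partial_{x},V)\partial_{x}$.

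To turn your proposal into a proof, fix at the outset $\overline{\W}^{E}V:=\overline{\W}(\partial_{x},V)\partial_{x}$ and $\overline{\W}^{B}:=(*\overline{\W})^{E}$ in the same convention. These are the signs the paper's translation from Fefferman--Graham produces. Your Gauss--Codazzi computation then yields both identities exactly as stated. Because both parts flip together, under the other convention $\overline{\W}^{B}-\overline{\W}^{E}$ changes only by an overall sign. So the later characterization via $\overline{\W}^{-}$ does not depend on the choice, but the proof of this proposition must make it explicitly.
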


\begin{proof}
These are the coordinate invariant expressions of the two equations
in page 49 of \cite{FeffermanGrahamAmbient}. We remark that, in our
convention, in abstract index notation we have $\overline{\W}^{E}=\overline{\W}_{a00}{^{b}}$.
Therefore, compared to the equations of Fefferman--Graham, the first
one has the opposite sign (because of the symmetries of $\overline{\W}$)
while the second one has the same sign (there are two sign flips,
one due to the different definition of $\overline{\W}^{E}$, and the
other due to the different orientation convention which flips the
sign of $*$).
\end{proof}
\bibliography{allpapers}{}
\bibliographystyle{alpha}
\newpage
\end{document}